\crefname{section}{§}{§§}
\Crefname{section}{§}{§§}
\DeclarePairedDelimiter\floor{\lfloor}{\rfloor}
\newtheorem{theorem}{Theorem}[section]
\newtheorem{lemma}[theorem]{Lemma}
\newtheorem{proposition}[theorem]{Proposition}
\newtheorem{corollary}[theorem]{Corollary}
\newtheorem{conj}[theorem]{Conjecture}
\theoremstyle{definition}
\newtheorem{definition}[theorem]{Definition}
\theoremstyle{remark}
\newtheorem{remark}[theorem]{Remark}
\newtheorem{notation}[theorem]{Notation}
\numberwithin{equation}{section}
\newcommand{\Alt}{\text{Alt}}
\newcommand{\Sym}{\text{Sym}}
\newcommand{\ord}{\text{ord}}
\newcommand{\Supp}{\text{Supp}}
\newcommand{\Aut}{\text{Aut}}
\newcommand{\Gal}{\text{Gal}}
\DeclareMathAlphabet{\mathpzc}{OT1}{pzc}{m}{it}
\DeclareMathOperator*{\spec}{Spec} 
\DeclareMathOperator*{\Ind}{Ind}
\newcommand{\PP}{\mathbb{P}}
\newcommand{\Z}{\mathbb{Z}} \newcommand{\A}{\mathbb{A}}
\begin{document}

\title{On The Inertia Conjecture for Alternating group Covers}
\author{Soumyadip Das}
\address{Statistics and Mathematics Unit,
Indian Statistical Institute, Bangalore Center, Bangalore 560059.}
\email{ soumyadip\_rs@isibang.ac.in}
\author{Manish Kumar}
\address{Statistics and Mathematics Unit,
Indian Statistical Institute, Bangalore Center, Bangalore 560059.}
\email{ manish@isibang.ac.in}


\keywords{Galois Cover, Inertia Conjecture, Ramification}

\begin{abstract}
The wild part of Abhyankar's Inertia Conjecture for a product of certain Alternating groups is shown for any algebraically closed field of odd characteristic. For $d$ a multiple of the characteristic of the base field, a new \'etale $A_d$-cover of the affine line is obtained using an explicit equation and it is shown that it has the minimal possible upper jump.
\end{abstract}

\maketitle


\section{Introduction}
Let $k$ be an algebraically closed field of characteristic $p>0$ and let $G$ be a finite group. The quasi $p$-subgroup $p(G)$ is the subgroup generated by all the Sylow $p$-subgroups of $G$. Let $\phi \colon Y \to \PP^1_k$ be a connected $G$-Galois cover of the projective line branched only at $\infty$. Then the induced $G/p(G)$-Galois cover is a prime-to-$p$ Galois cover of $\PP^1$ branched only at $\infty$ and hence it is trivial. So $G = p(G)$. Such a group is called a quasi $p$-group. Abhyankar's $1957$ conjecture (\cite{6}) on the affine $k$-curves implies that a finite group $G$ occurs as the Galois group of a cover of the projective line branched only at $\infty$ if and only if $G$ is a quasi $p$-group. This conjecture for the affine line was proved by Serre ($G$ solvable, \cite{4}) and Raynaud (\cite{3}).

Abhyankar in his $2001$ paper (\cite[Section 16]{19}) conjectured a more refined statement on the Galois \'{e}tale covers of the affine line which is known as the inertia conjecture. Let $G$ be a quasi $p$-group and $\phi \colon Y \rightarrow \mathbb{P}^1$ be a $G$-Galois cover of the projective line branched only at infinity. Then for any point $y$ of $Y$ in the fibre $\phi^{-1}(\infty)$, the inertia group $I$ at $y$ is an extension of a cyclic prime-to-$p$ group by a $p$-group $P$ (\cite[Chapter IV, Corollary 4]{Serre_loc}). Let $J$ be the subgroup of $G$ generated by all the conjugates of $P$ in $G$. Then $J$ is a normal subgroup of $G$ and $Y \to \PP^1$ induces a $G/J$-Galois cover of $\mathbb{P}^1$ which is at most tamely ramified at $\infty$. Since the tame fundamental group of $\mathbb{A}^1$ is trivial, $G=J$. Abhyankar's inertia conjecture says that the converse also holds.

The inertia conjecture can be compartmentalized into two parts (cf. \cite[Conjecture 4.2]{survey_paper}).

\begin{conj}\label{conj_break} Let $G$ be a finite quasi $p$-group.
\begin{description}
\item[Wild Part] A $p$-subgroup $P$ of $G$ occurs as the inertia group of a $G$-Galois cover of $\mathbb{P}^1$ branched only at $\infty$ if and only if the conjugates of $P$ generate $G$.
\item[Tame Part] Assume that a $p$-subgroup $P$ of $G$ occurs as the inertia group of a $G$-Galois \'{e}tale cover of the affine line. Let $\beta$ be an element of $G$ of prime-to-$p$ order contained in the normalizer of $P$ in $G$. Let $I$ be an extension of $\langle \beta \rangle$ by $P$ in $G$. Then there is a $G$-Galois cover $\phi \colon Y \to \mathbb{P}^1$ branched only at $\infty$ with inertia group $I$ at a point in $Y$ over $\infty$.
\end{description}
\end{conj}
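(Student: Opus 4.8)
The plan is to approach both parts through the formal and rigid patching method of Harbater and Raynaud, which reduces the global problem of realizing a $G$-Galois cover of $\PP^1$ branched only at $\infty$ to the assembly of local covers on a formal model, together with a purely group-theoretic generation condition. The forward (``only if'') direction of the Wild Part is already settled in the introduction: if $P$ is an inertia group, then the subgroup generated by its $G$-conjugates is normal and its quotient yields a tame cover of $\A^1$, forcing $G = \langle P^g : g \in G\rangle$. So the substance lies in the converse of the Wild Part and in the Tame Part, both of which I would obtain by building the cover from local data.

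For the converse of the Wild Part, suppose the conjugates of the $p$-group $P$ generate $G$. First I would produce the local datum at $\infty$: a connected, totally ramified $P$-Galois cover of the formal punctured disk $\spec k((1/t))$ with a prescribed ramification filtration. Such a cover exists because $P$ is a $p$-group (via Artin--Schreier--Witt theory, packaged through the Harbater--Katz--Gabber dictionary for the local piece), and it pins down the desired inertia. Next, choosing conjugates $P_1, \dots, P_r$ of $P$ with $\langle P_1, \dots, P_r\rangle = G$, I would construct auxiliary covers --- each contributing one $P_i$ and together amalgamating to $G$ --- over a formal model of $\PP^1$ supported near $\infty$, arranging all the extra ramification of the auxiliary pieces to be tame and then absorbed at $\infty$, so that the patched cover is \'etale over $\A^1$. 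Harbater's patching criterion glues these into a single $G$-Galois cover, and the hypothesis $\langle P_i \rangle = G$ is precisely what guarantees the result is connected.

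For the Tame Part, I would start from a $G$-Galois cover $\phi \colon Y \to \PP^1$, branched only at $\infty$, realizing $P$ as inertia, with $\beta$ a prime-to-$p$ element of the normalizer of $P$ so that $I$ is the prescribed extension of $\langle\beta\rangle$ by $P$ (matching the structure of inertia groups recalled in the introduction). The idea is to superimpose the tame generator $\beta$ onto the existing wild inertia. Since a tame cover of $\A^1$ is trivial, $\beta$ cannot be introduced by an independent tame cover branched only at $\infty$; instead I would realize $I$ by a deformation of $\phi$ that degenerates a tame cyclic structure of order $\ord(\beta)$ into the fibre over $\infty$, keeping the branch locus at $\infty$ throughout. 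The condition that $\beta$ normalizes $P$ is what makes $I$ a genuine subgroup and renders the tame twist compatible with the wild filtration, so that the inertia of the deformed cover is exactly $I$.

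The main obstacle, and the reason the statement stands as a conjecture rather than a theorem, is the simultaneous control of connectedness, inertia, and branch locus in the patching step for an arbitrary quasi-$p$ group $G$. Producing the local $P$-cover with an admissible ramification filtration is already constrained, since the achievable upper jumps depend delicately on $P$ and on $p$; and globalizing so that the monodromy fills out all of $G$ while the inertia remains exactly $P$ with no spurious branch points is not known in general. Harbater's formalism converts generation into connectedness, but guaranteeing that the auxiliary covers can be chosen with their ramification entirely concentrated at $\infty$ is the delicate point. It is exactly this step that must be carried out group by group, and the present paper supplies it --- with the minimal upper jump --- for products of certain alternating groups; a complete proof of the full conjecture would require establishing it uniformly for every quasi-$p$ group.
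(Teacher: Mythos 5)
You are attempting to prove Conjecture~\ref{conj_break} itself, but this statement is precisely Abhyankar's Inertia Conjecture, which the paper does \emph{not} prove: it is stated as a conjecture, the paper explicitly says it ``in general is wide open,'' and the paper's contribution is only to establish special cases (the wild part for products of certain Alternating groups, via Corollary~\ref{cor_A_d_containing_cycle}, Theorem~\ref{thm_pdt_g} and Corollary~\ref{cor_pdt_less_than_2p}). So there is no paper proof to compare against, and any complete argument along your lines would be a major theorem, not a routine verification. Your treatment of the ``only if'' direction of the Wild Part is correct and matches the paper's introduction (the conjugates of $P$ generate a normal subgroup $J$, and the induced $G/J$-cover is a tame cover of $\Aff^1$, hence trivial), but that direction was never the issue.

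The substantive directions of your sketch contain genuine gaps, which you partly acknowledge but which deserve to be named precisely. For the converse of the Wild Part, the patching step is circular: Raynaud-type patching (the paper's Theorem~\ref{thm_Raynaud}) requires as \emph{input}, for each subgroup $G_i$, a connected $G_i$-Galois \'etale cover of $\Aff^1$ with prescribed inertia $Q_i$ --- which is exactly the statement being proven, one level down. Generation of $G$ by the conjugates $P_1,\dots,P_r$ buys you connectedness of a patched cover only after you already possess those global pieces; it does not produce them, and the only unconditional supply is Harbater's theorem (\cite[Theorem 2]{2}) allowing the wild inertia to be \emph{enlarged}, which settles the Sylow case and nothing smaller. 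This is why the paper must construct explicit covers (Abhyankar's equations, Proposition~\ref{prop_3.3B} and Proposition~\ref{prop3.1}) to seed the induction for $A_d$. For the Tame Part, your proposed ``deformation that degenerates a tame cyclic structure into the fibre over $\infty$'' is not an existing technique and runs against the grain of what is known: Abhyankar's Lemma only \emph{reduces} the tame part of inertia, and since the tame fundamental group of $\Aff^1$ is trivial there is no independent tame cover to superimpose; ensuring that a deformation keeps the branch locus at $\infty$, keeps the cover connected, and produces inertia exactly $I$ (rather than a group with larger or incompatible tame quotient) is precisely the open problem. In short, your text is a reasonable description of the standard strategy and of why it does not close, but it is not a proof, and it could not be one without new ideas beyond patching.
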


By Abhyankar's Lemma (\cite[XIII, Proposition 5.2]{10}), one can reduce the tame part of the inertia group. On the other hand, Harbater has shown (\cite[Theorem 2]{2}) that the wild part of the inertia group can be increased. As a consequence, the inertia conjecture is known for Sylow $p$-subgroups of any quasi $p$-group. In \cite{16} it was shown that under certain conditions on the ramification filtration of the inertia group, its wild part can be made a little smaller.

The conjecture in general is wide open though some results supporting the conjecture have been obtained (see \cite{11}, \cite{12}, \cite{8}, \cite{15} and \cite{survey_paper} for more details).
 
For a quasi $p$-group $G$ and a subgroup $I$, we say that the pair $(G,I)$ is realizable if there exists a $G$-Galois cover $Y \to \PP^1$ \'{e}tale away from $\infty$ such that the inertia group at a point in $Y$ above $\infty$ is $I$. In this paper we first show that $(A_d,P)$ is realizable where $A_d$ is an Alternating group and $P$ is a $p$-subgroup containing a $p$-cycle (Corollary \ref{cor_A_d_containing_cycle}). Let $G_1$ and $G_2$ be perfect quasi $p$-groups, $I_1$ be a $p$-cyclic subgroup of $G_1$ and $I_2$ a cyclic $p$-subgroup of $G_2$. We show that if $(G_1,I_1)$ and $(G_2,I_2)$ are realizable then $(G_1\times G_2,I)$ is realizable for some cyclic $p$-subgroup $I$ of $G_1\times G_2$ such that its image in $G_1$ and $G_2$ are $I_1$ and $I_2$ respectively (Theorem \ref{thm_pdt_g}). As a consequence we obtain the following result.

\begin{corollary}
(Corollary \ref{cor_pdt_less_than_2p}) The wild part of the inertia conjecture is true for an $n$-fold product $A_{d_1} \times \cdots \times A_{d_n}$ where $p \leq d_i <2p$ for all $1 \leq i \leq n$.
\end{corollary}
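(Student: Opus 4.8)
The plan is to reduce the assertion to a realizability question for a single, well-chosen cyclic inertia group, to settle that case by feeding Corollary~\ref{cor_A_d_containing_cycle} into the product construction of Theorem~\ref{thm_pdt_g}, and finally to inflate the resulting cyclic inertia group up to the prescribed $p$-subgroup using Harbater's result that the wild part of an inertia group may be enlarged (\cite[Theorem 2]{2}).

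First I would pin down which $p$-subgroups can occur. Write $G=A_{d_1}\times\cdots\times A_{d_n}$ with projections $\pi_i\colon G\to A_{d_i}$. Since $p\le d_i<2p$, the exponent of $p$ dividing $d_i!$ is $1$, so a Sylow $p$-subgroup of $A_{d_i}$ is cyclic of order $p$, generated by a $p$-cycle $c_i$. Hence any $p$-subgroup $P\le G$ has $\pi_i(P)\in\{1,\langle c_i\rangle\}$, and the diagonal embedding $P\hookrightarrow\prod_i\pi_i(P)$ realizes $P$ as an elementary abelian subgroup of $\prod_i\langle c_i\rangle\cong(\ZZ/p)^n$. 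Because each $A_{d_i}$ is simple (I use $d_i\ge 5$, automatic when $p\ge 5$; the non-perfect factors $A_3,A_4$ arising only for $p=3$ must be treated separately), the normal closure of $P$ equals $\prod_{i\in S}A_{d_i}$ with $S=\{i:\pi_i(P)\ne 1\}$. So the conjugates of $P$ generate $G$ if and only if $\pi_i(P)=\langle c_i\rangle$ for every $i$, i.e. $P$ has full support; and $G$ is itself a perfect quasi $p$-group, so the conjecture is meaningful for it. It therefore remains to realize every full-support elementary abelian $P$.

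Next I would produce one realizable cyclic inertia group. By Corollary~\ref{cor_A_d_containing_cycle} each pair $(A_{d_i},\langle c_i\rangle)$ is realizable, and each $A_{d_i}$ is a perfect quasi $p$-group. Applying Theorem~\ref{thm_pdt_g} inductively---at the $k$-th stage taking $G_1=A_{d_1}\times\cdots\times A_{d_k}$ (perfect and quasi $p$ as a product of such groups) with $I_1$ the cyclic order-$p$ group realized so far, and $G_2=A_{d_{k+1}}$, $I_2=\langle c_{k+1}\rangle$---yields a realizable pair $(G,I)$ with $I$ cyclic of order $p$ and $\pi_i(I)=\langle c_i\rangle$ for all $i$; that is, $I$ is a full-support cyclic subgroup of $(\ZZ/p)^n$. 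If I can arrange $I\le P$, then, the conjugates of $P$ generating $G$, Harbater's enlargement result \cite[Theorem 2]{2} upgrades $(G,I)$ to $(G,P)$ and the wild part follows. Two kinds of freedom help here: realizability is invariant under $G$-conjugation (a conjugate of $I$ is the inertia group at another point over $\infty$), and conjugating by $N_{A_{d_i}}(\langle c_i\rangle)$ rescales the $i$-th coordinate of a generator of $I$ by an arbitrary unit of $\ZZ/p$ whenever $d_i\ge p+2$ (there are then enough fixed points to correct parity). Consequently every full-support cyclic subgroup of $(\ZZ/p)^n$ is realizable, and it suffices to show that $P$ contains one.

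The main obstacle is precisely this last point. A full-support cyclic subgroup of $P$ is the same as a vector of $P$ with all coordinates nonzero, and a subspace of $(\ZZ/p)^n$ surjecting onto each coordinate need not contain such a vector: its absence means $P=\bigcup_i\bigl(P\cap\{x_i=0\}\bigr)$ is covered by the $n$ proper subspaces cut by the coordinate hyperplanes. By the classical fact that a vector space over $\FF_p$ cannot be covered by fewer than $p+1$ proper subspaces, this is impossible once $n\le p$, and in that range the argument above is complete. For $n\ge p+1$ full-support subspaces without a full-support vector do exist---for instance $\{(s,t,s+t,s-t)\}$ in $(\ZZ/3)^4$---and for these the purely cyclic output of Theorem~\ref{thm_pdt_g} is not enough: one is forced to realize a higher-rank elementary abelian inertia group directly. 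I expect the heart of the matter to lie here, calling either for a strengthening of the product construction that outputs non-cyclic inertia matched to the linear-algebraic type of $P$, or for an independent realization of such $P$; by comparison, the leftover bookkeeping for the non-perfect factors $A_3,A_4$ at $p=3$ and for the classes $d_i\in\{p,p+1\}$, where the normalizer induces only squares in $(\ZZ/p)^\times$, is routine.
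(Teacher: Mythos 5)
Your analysis of the $p$-subgroups of $G=A_{d_1}\times\cdots\times A_{d_n}$ is correct, and on the range $n\le p$ your argument is essentially the paper's own: the paper likewise builds a realizable full-support cyclic subgroup by iterating Theorem \ref{thm_pdt_g} starting from Corollary \ref{cor_A_d_containing_cycle}, adjusts the exponent $a$ by composing the Galois action with an automorphism of the relevant alternating factor (which need not be inner, so your parity worries and the special treatment of $d_i\in\{p,p+1\}$ evaporate), and then enlarges the cyclic inertia group to $P$ via Harbater \cite[Theorem 2]{2}. But the corollary is asserted for every $n$, and, as you yourself observe, your route genuinely breaks down for $n\ge p+1$: a subspace of $(\ZZ/p)^n$ surjecting onto every coordinate need not contain a vector with all coordinates nonzero (your example $\{(s,t,s+t,s-t)\}\subset(\ZZ/3)^4$ is a correct witness), and Harbater's theorem can only enlarge an inertia group that sits \emph{inside} $P$, so it can never land exactly on such a $P$. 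This is a real gap in the proof of the stated result, not leftover bookkeeping.

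The missing idea is already in the paper: Raynaud's patching theorem (Theorem \ref{thm_Raynaud}, i.e. \cite[Theorem 2.2.3]{3}), which takes subgroups $G_i\le G$ with $G=\langle\{G_i\},Q\rangle$ and connected $G_i$-Galois \'{e}tale covers of the affine line with inertia $Q_i\le G_i\cap Q$, and outputs a connected $G$-Galois \'{e}tale cover with inertia group \emph{equal to} the prescribed $p$-group $Q$. The paper (Lemma \ref{lem_minimal_inertia_product} feeding into Corollary \ref{cor_pdt}) chooses finitely many elements $g_1,\dots,g_r\in P$ whose supports cover $\{1,\dots,n\}$, sets $H_i=\prod_{j\in S(g_i)}A_{d_j}$, and realizes each cyclic pair $(H_i,\langle g_i\rangle)$ by exactly your $n\le p$ argument --- no comparison between $|S(g_i)|$ and $p$ is needed there, because $g_i$ has full support in $H_i$ by construction. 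Since $\langle g_i\rangle\le H_i\cap P$ and $\langle H_1,\dots,H_r\rangle=G$, Theorem \ref{thm_Raynaud} then yields a $G$-cover branched only at $\infty$ with inertia exactly $P$; in your $(\ZZ/3)^4$ example one may take $g_1=(1,1,2,0)$ and $g_2=(1,2,0,2)$. So the resolution is not a ``strengthening of the product construction that outputs non-cyclic inertia'': one keeps the cyclic construction of Theorem \ref{thm_pdt_g} and assembles the non-cyclic inertia group afterwards by patching covers of the subproducts $H_i$.
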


In fact we show that if $(A_{rp},\langle \tau \rangle)$ and $(A_{rp+1},\langle \tau\rangle)$ are realizable for all $r\ge 2$ where $\tau$ is the product of $r$ disjoint $p$-cycles, then the wild part of the inertia conjecture holds for any finite product of Alternating groups.

The strategy of the proof is as follows. In \cite[Section 21]{7} Abhyankar introduced covers using some explicit equations and determined the Galois groups to be $A_d$, but the structure of the inertia group of such covers for $d \geq 2p$ has not been studied. For Alternating groups $A_d$, we compute these inertia groups and use Abhyankar's Lemma to prove that $(A_d,I)$ is realizable if $I$ is generated by a $p$-cycle. For Theorem \ref{thm_pdt_g} we use Harbater's formal patching techniques to construct covers with the inertia group of the form $(\Z/p)^2$ with a desirable ramification filtration so that one could apply \cite[Theorem 3.7]{16} and reduce the inertia group.

In Section \cref{sec_prilim} we introduce basic definitions and results regarding inertia groups and the Galois covers of smooth projective curves. Section \cref{sec_patching} contains some useful formal patching results needed to prove the main results. In Section \cref{sec_alt} we consider Alternating group covers. In Section \cref{sec_altp} we  treat the product of Alternating groups. Finally in Section \cref{sec_neweq}, for $d$, a multiple of $p$, we provide an explicit affine equation and show that its Galois closure is an $A_d$-Galois cover of $\mathbb{P}^1$ \'{e}tale away from $\infty$. The inertia group and related invariants are also calculated for this cover. We also show that this $A_d$-cover has the minimum possible upper jump (in the sense of \cite{9}).

\subsection*{Acknowledgements} To appear after the review process.

\section{Preliminaries}\label{sec_prilim}
Let $k$ be an algebraically closed field of characteristic $p>0$. A cover of $k$-curves means a finite generically separable morphism $\phi \colon Y\rightarrow X$ of smooth connected $k$-curves which is \'{e}tale outside a finite set of closed points in $X$. The automorphism group $\text{Aut}(Y/X)$ of a cover $\phi \colon Y \rightarrow X$ is the group of $k$-automorphisms $\sigma$ of $Y$ such that $\phi \circ \sigma =\phi$. For a finite group $G$, a $G$-Galois cover is a cover $Y\rightarrow X$ of curves together with an inclusion $\rho \colon G \hookrightarrow \text{Aut}(Y/X)$ such that $G$ acts simply transitively on each generic geometric fibre. Since $X$ is irreducible, the inclusion $\rho$ is necessarily an isomorphism.

\subsection{Group theory}
The following group theoretic results are useful to decide which subgroups of $A_d$ or of a product of Alternating groups are potentially inertia subgroups. A special case ($r=1$ and $d < 2 p$) of the next result can be found in \cite[Lemma 4.13, Lemma 4.14]{8}.

\begin{proposition}\label{prop2.1}
Let $p$ be a prime, $d \geq p$. Let $\tau$ be an element of order $p$ in the Symmetric group $S_d$. Let $\tau = \Pi_{i=1}^r \tau_i$ be the disjoint cycle decomposition of $\tau$ with $\tau_1,\cdots,\tau_r$ disjoint $p$-cycles and $r \cdot p \leq d$. For $\sigma \in S_d$ let $\Supp(\sigma)$ denote the support of $\sigma$. Then there exists an element $\theta \in \Sym(\Supp(\tau))\cap N_{S_d}(\langle \tau \rangle)$ of order $p-1$ such that conjugation by $\theta$ is a generator of $\Aut(\langle \tau \rangle)$. Moreover, let $H' \coloneqq \{\sigma\in S_d \colon \Supp(\sigma) \subset \Supp(\tau), \sigma \cdot \tau_i \cdot \sigma^{-1} = \tau_j \text{ for } 1 \leq i , j \leq r\}$. Then $N_{S_d}(\langle \tau \rangle)= \langle \theta, H'\rangle \times H$, where $H$ is the Symmetric group on the set $\{1,\cdots, d\} \setminus \Supp(\tau)$.

In particular, if $\beta\in N_{S_d}(\langle (1,\cdots, p) \rangle)$ has order prime to $p$ then $\beta=\theta^i \cdot \omega$ for some integer $0 \leq i \leq p-1$ and an element $\omega \in H = \Sym(\{p+1 , \cdots ,d\})$.
\end{proposition}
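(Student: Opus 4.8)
The plan is to pin down the structure of $N_{S_d}(\langle\tau\rangle)$ by separating the behaviour on $\Supp(\tau)$ from that on its complement, building $\theta$ by hand, and recognising $H'$ as the centralizer of $\tau$. First I would observe that every $\sigma\in N_{S_d}(\langle\tau\rangle)$ preserves $\Supp(\tau)$: we have $\sigma\tau\sigma^{-1}=\tau^{k}$ for some $k\in(\Z/p)^{\ast}$, and since $\tau$ is a product of $p$-cycles with $\gcd(k,p)=1$, the power $\tau^{k}$ is again a product of $r$ disjoint $p$-cycles with the \emph{same} support; comparing supports gives $\sigma(\Supp(\tau))=\Supp(\tau)$. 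Hence $N_{S_d}(\langle\tau\rangle)\subseteq\Sym(\Supp(\tau))\times H$. Setting $N'\coloneqq N_{S_d}(\langle\tau\rangle)\cap\Sym(\Supp(\tau))$, every element of $H$ commutes with $\tau$ and hence normalizes $\langle\tau\rangle$, while for $\sigma=\sigma_1\sigma_2$ with $\sigma_1$ supported on $\Supp(\tau)$ and $\sigma_2\in H$ one has $\sigma\tau\sigma^{-1}=\sigma_1\tau\sigma_1^{-1}$, so $\sigma_1\in N'$. This yields $N_{S_d}(\langle\tau\rangle)=N'\times H$, and it remains to show $N'=\langle\theta,H'\rangle$.

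Next I would construct $\theta$ explicitly. Fix a generator $g$ of $(\Z/p)^{\ast}$ and index each support so that $\tau_i=(a^{(i)}_0,\dots,a^{(i)}_{p-1})$ with $\tau_i(a^{(i)}_j)=a^{(i)}_{j+1}$ (indices mod $p$). Define $\theta_i\in\Sym(\Supp(\tau_i))$ by $\theta_i(a^{(i)}_j)=a^{(i)}_{gj}$ and set $\theta\coloneqq\prod_{i=1}^{r}\theta_i$. A direct check gives $\theta_i\tau_i\theta_i^{-1}=\tau_i^{g}$, hence $\theta\tau\theta^{-1}=\tau^{g}$, so conjugation by $\theta$ is the automorphism $\tau\mapsto\tau^{g}$, a generator of $\Aut(\langle\tau\rangle)\cong(\Z/p)^{\ast}$. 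Each $\theta_i$ fixes $a^{(i)}_0$ and acts on the remaining points as a single $(p-1)$-cycle (multiplication by $g$ being a $(p-1)$-cycle on $(\Z/p)^{\ast}$), so $\theta$ is a product of $r$ disjoint $(p-1)$-cycles and has order exactly $p-1$. This establishes the first assertion.

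For the decomposition of $N'$ I would first identify $H'$ with the centralizer $C\coloneqq C_{\Sym(\Supp(\tau))}(\tau)$. If $\sigma\in H'$ permutes the $\tau_i$ among themselves, then $\sigma\tau\sigma^{-1}=\prod_i\sigma\tau_i\sigma^{-1}=\prod_i\tau_{\pi(i)}=\tau$, so $H'\subseteq C$; conversely, if $\sigma\in C$ then $\{\sigma\tau_i\sigma^{-1}\}$ is a set of $r$ disjoint $p$-cycles whose product is $\tau$, and uniqueness of the disjoint cycle decomposition forces each $\sigma\tau_i\sigma^{-1}$ to equal some $\tau_j$, i.e.\ $\sigma\in H'$. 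Thus $H'=C$. Now conjugation gives a homomorphism $N'\to\Aut(\langle\tau\rangle)\cong\Z/(p-1)$ with kernel exactly $C=H'$; by the previous paragraph $\theta\in N'$ maps onto a generator, and since $\theta$ has order $p-1$ the composite $\langle\theta\rangle\hookrightarrow N'\twoheadrightarrow\Aut(\langle\tau\rangle)$ is an isomorphism. Hence $\langle\theta\rangle\cap H'=1$ and $N'=H'\rtimes\langle\theta\rangle=\langle\theta,H'\rangle$, giving $N_{S_d}(\langle\tau\rangle)=\langle\theta,H'\rangle\times H$.

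Finally, the special case is $r=1$: here $\Supp(\tau)=\{1,\dots,p\}$, the centralizer of a single $p$-cycle in $\Sym(\Supp(\tau))$ is $\langle\tau\rangle$, so $H'=\langle\tau\rangle$ and $N'=\langle\theta,\tau\rangle$ is the Frobenius group $\F_p\rtimes\F_p^{\ast}$ of order $p(p-1)$. Writing $\beta=\nu\omega$ with $\nu\in N'$ and $\omega\in H=\Sym(\{p+1,\dots,d\})$, the hypothesis that $\beta$ has order prime to $p$ forces $\nu$ to have order prime to $p$; every such element of $\F_p\rtimes\F_p^{\ast}$ is either trivial or, viewed as an affine map, has a fixed point, and is therefore conjugate by a suitable power of $\tau$ to a power $\theta^{i}$. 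Since powers of $\tau$ commute with $H$, conjugating $\beta$ by that power of $\tau$ produces $\theta^{i}\omega$, giving the stated form. I expect the step needing the most care to be the identification $H'=C$ together with the splitting of $1\to H'\to N'\to\Aut(\langle\tau\rangle)\to 1$ by $\theta$; the special case is then a short computation inside $\F_p\rtimes\F_p^{\ast}$, with the only subtlety being that the normal form $\beta=\theta^{i}\omega$ is reached after conjugating by a power of $\tau$ (harmless for the inertia-group application, where only the conjugacy class matters).
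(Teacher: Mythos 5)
Your proof is correct and follows essentially the same route as the paper's: the same decomposition $N_{S_d}(\langle\tau\rangle)=\langle\theta,H'\rangle\times H$, obtained by mapping the normalizer onto $\Aut(\langle\tau\rangle)$ and identifying the kernel with the centralizer $H'$, and the same construction of $\theta$ (your explicit $a^{(i)}_j\mapsto a^{(i)}_{gj}$ on each block is exactly the paper's diagonal embedding of the order-$(p-1)$ element of $AGL(1,p)$). The only differences are ones of detail, in your favor: you make explicit the support-preservation step and the final clause about $\beta$, correctly flagging that $\beta=\theta^i\omega$ holds only after replacing $\theta$ by a conjugate under a power of $\tau$ --- which is consistent with how the paper later invokes the proposition (``for some $\theta$'').
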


\begin{proof}
For a $p$-cycle $\tau'$ in $S_p$, the normalizer $N_{S_p}(\langle \tau' \rangle)$ is the affine general linear group $AGL(1,p)$ of order $p(p-1)$ and has an element of order $p-1$. Diagonally embedding $S_p$ in $\Sym(\Supp(\tau))$ we obtain an element $\theta$ of order $p-1$ in $\Sym(\Supp(\tau))$. Then $\theta$ normalizes $\langle \tau \rangle$ and the conjugation by $\theta$ has order $p-1$. Since $\tau$ is of order $p$, the full automorphism group of $\langle \tau \rangle$ is generated by $\theta$. So the natural homomorphism $N_{\Sym(\Supp(\tau))}(\langle \tau \rangle) \to \Aut(\langle \tau \rangle)$ is a surjection whose kernel is the centralizer of $\langle \tau \rangle$ in $\Sym(\Supp(\tau))$. Now observe that the centralizer of $\langle \tau \rangle$ in $\Sym(\Supp(\tau))$ is $H'$. So $N_{S_d}(\langle \tau\rangle) = \langle \theta , H' \rangle \times H$.
\end{proof}

\begin{notation}\label{not_pdt}
Let $u \geq 1$ and let $G=G_1 \times \cdots \times G_u$ where each $G_i$ is a finite group. For $g \in G$, let $g=(g^{(i)})_{1 \leq i \leq u}$, and set $S(g)=|\{1 \leq i \leq u| g^{(i)} \neq 1\}|$. Set $l(g)=|S(g)|$. For $\lambda \subseteq \{1, \cdots, u\}$, let $H_\lambda \coloneqq \Pi_{i \in \lambda} G_i$ and let $\pi_\lambda \colon G \twoheadrightarrow H_\lambda$ be the projection. 
\end{notation}

\begin{lemma}\label{lem_minimal_inertia_product}
Let $u \geq 1$ be an integer, $G=G_1 \times \cdots \times G_u$, where each $G_i$ is a simple non-abelian quasi $p$-group. Let $Q$ be a $p$-subgroup whose conjugates generate $G$. Then there are elements $g_1$, $\cdots$, $g_r$ in $Q$ for some $r \geq 1$ satisfying the following properties. Set $S_{\le j} \coloneqq \cup_{i=1}^j S(g_i)$, $1 \leq j \leq r$.
\hfill
\begin{enumerate}
\item $S_{\le r} = \{1, \cdots, u\}$, $l(g_1) \geq \cdots \geq l(g_r)$ and for all $1\leq i,j\leq r$, $S(g_i) \cap S(g_j) \neq \varnothing$.
\item For all $1 \leq i \leq r$, $H_{S_{\le i}}$ is generated by the conjugates of $\langle g_1, \cdots, g_i \rangle$.
\item For any subset $\{i_1,\cdots,i_t\} \subset \{1,\cdots, r\}$ and any integers $1 \leq a_{i_1} , \cdots , a_{i_t} \leq p-1$, $l(g_{i_1}^{a_{i_1}} \cdots g_{i_t}^{a_{i_t}}) \leq \text{max}_{1 \leq j \leq t} l(g_{i_j})$.
\item For each $1 \leq i \leq r$, $\ord(g_i^{(j)}) = p$ for some $1 \leq j \leq u$.
\end{enumerate}
\end{lemma}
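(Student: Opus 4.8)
The plan is to first dispose of conditions (2) and (4), which decouple from the combinatorial heart of the statement, and then to build the family by a greedy argument whose only genuinely delicate point is (3).

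I would begin with the structural fact that, since each $G_i$ is non-abelian simple, every normal subgroup of $G$ is a subproduct $H_\lambda$ for some $\lambda\subseteq\{1,\dots,u\}$: if $N\trianglelefteq G$ and $n\in N$ has $n^{(j)}\neq 1$, the commutators $[n,x]$ with $x\in G_j$ lie in $N\cap G_j$ and generate $G_j$ by simplicity, so $G_j\subseteq N$, whence $N=H_\lambda$ with $\lambda=\{j:\pi_j(N)\neq 1\}$. Applying this to the normal closure of $\langle g_1,\dots,g_i\rangle$, whose $j$-th projection is nontrivial exactly when $j\in S_{\le i}$, identifies that normal closure with $H_{S_{\le i}}$. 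Hence condition (2) holds automatically for \emph{any} choice of the $g_i$ once $S_{\le i}$ is the union of the supports; and applied to $Q$ itself (whose normal closure is $G$) it gives $\bigcup_{g\in Q}\Supp\nolimits_* = \{1,\dots,u\}$ and $\pi_j(Q)\neq 1$ for all $j$. So the real task is only to produce a family of elements of $Q$ that covers $\{1,\dots,u\}$, has pairwise-intersecting supports, is orderable by decreasing $l$, and satisfies (3)--(4).

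Next I would decouple (4). For any nontrivial $g\in Q$, if $p^{c}$ is the smallest order occurring among the coordinates $g^{(j)}$ with $j\in S(g)$, then $g^{p^{c-1}}$ has exactly the same support as $g$ (each such coordinate had order $\ge p^{c}$, hence survives) while the coordinate of minimal order now has order precisely $p$. Thus (4) may be imposed on every selected element without altering its support, so it does not interfere with (1)--(3): it suffices to build a family satisfying (1)--(3) and then replace each element by such a power. For (1) I would proceed greedily by decreasing support, taking $g_1\in Q$ of maximal $l(g_1)$. If $S(g_1)=\{1,\dots,u\}$ then $r=1$ already succeeds, since for a single element every product of powers has support inside $S(g_1)$ and (3) is immediate; this settles the case $u\le p$, where a full-support element must exist because the $p$-group $Q$ cannot be the union of the $u<p+1$ proper subgroups $\ker(\pi_j|_Q)$. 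In the general case $S(g_1)$ may be proper, and I would run an induction on $u$: with $T=\{1,\dots,u\}\setminus S(g_1)$, the conjugates of $\pi_T(Q)$ generate $H_T$, so the inductive hypothesis furnishes a good family in $\pi_T(Q)$; I lift it back to $Q$ and correct each lift by multiplying with suitable powers of the already chosen elements, forcing its support to meet $S(g_1)$ (restoring pairwise intersection) while preserving its prescribed $T$-part.

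The main obstacle is maintaining (3) through this recombination, because the lifts introduce uncontrolled entries on $S(g_1)$ that some product of powers might fail to cancel, inflating the support beyond the allowed maximum. The mechanism for controlling this is the cancellation dichotomy in a shared coordinate $m$: if $\langle g_i^{(m)}\rangle\neq\langle g_{i'}^{(m)}\rangle$ in the simple group $G_m$, then $(g_i^{(m)})^{a}(g_{i'}^{(m)})^{b}$ is never trivial and $m$ always survives, whereas if the two cyclic images coincide then for each exponent pair exactly one residue cancels. Since (3) must hold for every subset $\{i_1,\dots,i_t\}$ and every exponent vector with $1\le a_{i_j}\le p-1$, I expect the core of the proof to be a careful choice---made along the induction---of the coordinate-wise cyclic images and of the overlaps, arranged so that whenever a later, smaller-support element protrudes from a larger one the excess is forced to cancel, and so that no single product can evade all such cancellations at once. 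Verifying this uniformly (the behaviour genuinely differs for $p=2$ and $p>2$, as the toy supports $\{1,2\},\{2,3\}$ already show) is where the argument is most delicate.
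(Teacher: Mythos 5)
Your handling of conditions (2) and (4) is correct and essentially matches the paper's (normal closures in a product of non-abelian simple groups are subproducts; raising $g_i$ to a suitable $p$-power preserves its support), and your greedy choice of $g_1$, with the covering argument settling the case $u\le p$, is also fine. The gap is that the two conditions carrying the real content --- the pairwise-intersection clause of (1) and condition (3) --- are never proved. For the general case you propose an induction on $u$: project $Q$ to $H_T$ with $T=\{1,\cdots,u\}\setminus S(g_1)$, lift the inductive family back to $Q$, and ``correct'' the lifts by multiplying by powers of earlier elements; you then state explicitly that verifying (3) through this recombination is where the argument is most delicate, and you only sketch a cancellation mechanism that you expect to work. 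That is precisely the part of the lemma that needs a proof, so the proposal is incomplete. Worse, the correction step as described is self-defeating: if a lift $h$ had $S(h)\cap S(g_1)=\varnothing$, then multiplying it by a power of $g_1$ yields an element of support size $l(h)+l(g_1)>l(g_1)$, violating the ordering $l(g_1)\ge l(g_2)\ge \cdots$ demanded in (1). (In fact such an $h$ cannot exist at all, but for a reason your argument never exploits --- see below.)

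The idea you are missing, and the one the paper's proof runs on, is to continue the greedy selection \emph{inside $Q$ itself} and to use that $Q$ is closed under multiplication. Choose $g_{i+1}$ to maximize $l$ among all elements of $Q$ whose support meets $(S_{\le i})^c$. Then any product $h=g_{i_1}^{a_{i_1}}\cdots g_{i_t}^{a_{i_t}}$ with $i_1<\cdots<i_t$ and $p\nmid a_{i_j}$ is again an element of $Q$, and on any coordinate $m\in S(g_{i_t})\cap (S_{\le i_t-1})^c$ (nonempty by the choice of $g_{i_t}$) every factor except the last is trivial, so $h^{(m)}=(g_{i_t}^{(m)})^{a_{i_t}}\neq 1$. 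Hence $S(h)$ meets $(S_{\le i_1-1})^c$, so $h$ was itself a candidate at step $i_1$, and maximality gives $l(h)\le l(g_{i_1})=\max_j l(g_{i_j})$: that is condition (3), with no cancellation analysis and uniformly in $p$ (your perceived $p=2$ versus $p>2$ dichotomy is an artifact of your approach). The same one-line argument gives the pairwise intersections: if $j<i$ and $S(g_i)\cap S(g_j)=\varnothing$, then $g_ig_j\in Q$ meets $(S_{\le j-1})^c$ and has $l(g_ig_j)=l(g_i)+l(g_j)>l(g_j)$, contradicting the choice of $g_j$. Finally, this also repairs the loose end in your reduction of (4): after replacing each $g_i$ by $g_i^{p^{k_i}}$, products of the new elements are still elements of $Q$ whose last factor remains nontrivial on the fresh coordinate (its order there exceeds $p^{k_i}$), so (3) for the modified family follows by the same maximality argument rather than by the unproved claim that the replacement ``does not interfere.''
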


\begin{proof}
Choose $g_1 \in Q$ such that $l(g_1) \geq l(g)$ for all $g \in Q$. If $l(g_1) = u$ then set $r=1$ and note that conditions (1) and (3) trivially are satisfied. Condition (2) holds because $G_j$'s are simple groups. Inductively define $g_i$ as follows. For $i\ge 1$, if $S_{\le i}$ is not the whole set then choose $g_{i+1}$ among $g\in Q$ with $S(g)\cap (S_{\le i})^c \ne \varnothing$ such that $l(g_{i+1})$ is maximal. The inclusion $S_{\le i}\subset S_{\le i+1}$ implies $l(g_i)\ge l(g_{i+1})$. Since $G_j$'s are all simple non-abelian groups, the conjugates of $\langle g_1,\cdots,g_i\rangle$ generate $H_{S_{\le i}}$. For $j\le i-1$, $l(g_ig_j) \leq l(g_j)$ and hence $S(g_i)\cap S(g_j) \ne \varnothing$. Condition (3) follows by the choice of $g_j$'s to maximize $l(g_j)$'s. 

Finally for condition (4) note that if $p^{k+1}$ is the least order of $g_i^{(j)}$ for various $j$ among nontrivial $g_i^{(j)}$ then replacing $g_i$ by $g_i^{p^k}$ will achieve the goal.
\end{proof}

\begin{remark}
In the above lemma if $1 \leq u \leq p$ then $r=1$, i.e. there exist $g\in Q$ such that $S(g)=\{1,\cdots, u \}$. Also note that we can take each $l(g_i)\geq p$.
\end{remark}

The following lemma follows from the fact that the abelianization of a quasi $p$-group is a $p$-group.

\begin{lemma}\label{lem_grp_perfect}
Let $G$ be a finite quasi $p$-group. Then the following are equivalent.
\begin{enumerate}
\item $G$ is perfect (i.e. $G$ equals its commutator subgroup).
\item There is no nontrivial homomorphism from $G$ to $\mathbb{Z}/p$.
\item There is no nontrivial homomorphism from $G$ to any $p$-group.
\end{enumerate}
\end{lemma}

\subsection{Ramification}
Let $X$ be a smooth projective curve over $k$, with function field $K$. Let $F/K$ be a finite separable extension. Let $\psi \colon Y \to X$ be the normalization of $X$ in $F$. Let $\phi \colon \widetilde{Y}\rightarrow X$ be the Galois closure of $\psi$ with Galois group $G$. It is well-known (\cite[Lemma 4.2]{8}) that the branch loci of $\phi$ and $\psi$ are the same, which we denote by $B$. For details on the ramification theory for local extensions see \cite[Chapter IV]{Serre_loc}, from which we recall a few facts.

Let $x \in B$, $y \in \phi^{-1}(x)$, and let $I$ be the inertia group at $y$. Since $k$ is algebraically closed and $y$ is a closed point of $Y$, the inertia group is the same as the decomposition group at $y$. Note that $I$ is of the form $P \rtimes \mu_m$ where $P$ is a $p$-group and $(m,p)=1$ (\cite[Chapter IV, Corollary 4]{Serre_loc}). Also recall that the lower ramification groups $\{I_i\}_{i\geq 0}$ constitute a finite decreasing filtration of $I$ at $y$ which is in an explicit bijection with the upper indexed ramification filtration of $I$. If $|I|=pm$, $p\nmid m$, there is a unique lower jump $h\geq 1$ which is called the \textit{conductor}. The corresponding \textit{upper jump} is denoted by $\sigma\coloneqq h/m$.

We fix the following setup for the rest of the section. Let $p>2$ be a prime, $d \geq p+2$ an integer and $\psi \colon Y\rightarrow \mathbb{P}^1$ a degree-$d$ smooth connected cover \'{e}tale away from $\infty$. Let $\phi \colon \widetilde{Y}\rightarrow \mathbb{P}^1$ be its Galois closure with Galois group $G$. Note that $G$ is a transitive quasi $p$-subgroup of $S_d$. Assume that the inertia group at a point in $\widetilde{Y}$ lying over $\infty$ is $I = P \rtimes \mu_m$, where $P\cong \mathbb{Z}/p \mathbb{Z}$. Let $\tau$ be a (fixed) generator of $P$, and $\beta\in \mu_m$ is of order $m$. For the rest of this section we also assume that $\tau$ is the $p$-cycle $(1,\cdots,p)$. Let $h$ be the conductor. Also $I = N_{I}(\langle \tau \rangle)\subseteq N_{G}(\langle \tau \rangle)=N_{S_d}(\langle \tau \rangle)\cap G$. By Proposition \ref{prop2.1}, $\beta=\theta^i\omega$ for some $\theta$, $\omega\in N_{S_d}(\langle \tau \rangle)$ and $0\le i \le p-1$. Furthermore $\theta\in \Sym(\Supp(\tau))=S_p$ is of order $p-1$ and $\omega\in \Sym(\Supp(\tau)^c)$. 

Consider the group homomorphism $g \colon \langle \beta \rangle \rightarrow \text{Aut }(\langle \tau \rangle)$, $g(\beta)\colon \tau \mapsto \beta \tau \beta^{-1}$. Let $m'$ be the order of $\text{ker}(g)=\{\beta^j | \beta^j \text{ commutes with } \tau\}$. This is the prime-to-$p$ part of the center of $G_0$, called \textit{the central part of the tame ramification}. Put $m''\coloneqq \frac{m}{m'}$. Then $\text{ker}(g)=\langle\beta^{m''}\rangle$ is the subgroup acting trivially on $\langle \tau \rangle$. Also observe that $\text{Im}(g)$ has order $m''$ in $\text{Aut}(\langle \tau \rangle)$, and $m''$ is called \textit{the faithful part of the tame ramification}. Let $h$ be the conductor. The following two lemmas are easy generalizations of results in \cite[Section 4.6, Proposition 4.16 and 4.17]{survey_paper}.

\begin{lemma}\label{lem2.1}
Under the above notation $m'=\text{g.c.d.}(h,m)$ and $\ord(\theta^i)=m''$.
\end{lemma}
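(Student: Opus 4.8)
The plan is to recast both identities in terms of a single integer $c$, well-defined modulo $p$, determined by $\beta\tau\beta^{-1} = \tau^c$. By definition $\text{Im}(g)$ is the cyclic subgroup of $\Aut(\langle\tau\rangle)\cong(\Z/p)^\times$ generated by the automorphism $\tau\mapsto\tau^c$, so $m''=\ord(c)$ as an element of $(\Z/p)^\times$, and correspondingly $m'=m/m''$. Both claims then reduce to computing this order $m''$ in two different ways: once through the support decomposition $\beta=\theta^i\omega$ of Proposition \ref{prop2.1}, and once through the ramification invariant $h$ via a local computation at $y$.

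For the identity $\ord(\theta^i)=m''$ I would argue purely group-theoretically. Since $\omega\in\Sym(\Supp(\tau)^c)$ has support disjoint from $\tau$, it commutes with $\tau$, so $\beta\tau\beta^{-1}=\theta^i\omega\,\tau\,\omega^{-1}\theta^{-i}=\theta^i\tau\theta^{-i}$; thus the action of $\beta$ on $\langle\tau\rangle$ coincides with that of $\theta^i$, and $m''$ is the order of conjugation-by-$\theta^i$ in $\Aut(\langle\tau\rangle)$. By Proposition \ref{prop2.1}, conjugation by $\theta$ generates $\Aut(\langle\tau\rangle)$, a cyclic group of order $p-1$, while $\theta$ itself has order $p-1$; hence the homomorphism $\langle\theta\rangle\to\Aut(\langle\tau\rangle)$ sending $\theta$ to its conjugation action is a surjection between groups of the same order $p-1$, so an isomorphism. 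Consequently $\ord(\theta^i)$ equals the order of conjugation-by-$\theta^i$, which is exactly $m''$.

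For the identity $m'=\gcd(h,m)$ I would pass to the totally ramified extension $L/K$ of complete discretely valued fields at $y$ over $x$, with $\Gal(L/K)=I=\langle\tau\rangle\rtimes\langle\beta\rangle$. Because $\langle\beta\rangle$ is the prime-to-$p$ (tame) part, it acts faithfully on the cotangent line $\mathfrak{m}/\mathfrak{m}^2$, and after averaging I may choose a uniformizer $t$ of $L$ on which $\beta$ acts linearly, $\beta(t)=\zeta t$ with $\zeta$ a primitive $m$-th root of unity. Writing $\tau(t)=t+a\,t^{h+1}+O(t^{h+2})$ with $a\neq 0$ (the exponent being $h+1$ precisely because $h$ is the conductor, i.e. the unique lower jump with $i_I(\tau)=h+1$), a short induction gives $\tau^c(t)=t+ca\,t^{h+1}+\cdots$, while a direct three-step evaluation yields $\beta\tau\beta^{-1}(t)=t+\zeta^h a\,t^{h+1}+\cdots$. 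Comparing leading coefficients forces $c=\zeta^h$ in $k$; since $\zeta$ has order $m$ this gives $m''=\ord(c)=\ord(\zeta^h)=m/\gcd(h,m)$, and therefore $m'=m/m''=\gcd(h,m)$.

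The two formal power-series computations are routine; the step demanding the most care, and the genuine crux, is the local normalization underlying the first identity: justifying that the tame generator $\beta$ may be linearized on a uniformizer (so that its cotangent eigenvalue $\zeta$ is a primitive $m$-th root of unity) and that the conductor is read off intrinsically as $h=v_L(\tau(t)-t)-1$, independently of the chosen uniformizer. Once the clean relation $c=\zeta^h$ is established, both identities follow at once.
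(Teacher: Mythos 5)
Your proposal is correct. Its second half, the identity $\ord(\theta^i)=m''$, is exactly the paper's argument: $\omega$ commutes with $\tau$, so conjugation by $\beta$ agrees with conjugation by $\theta^i$, and since conjugation by $\theta$ has order $p-1$ equal to $\ord(\theta)$, the map $\langle\theta\rangle\to\Aut(\langle\tau\rangle)$ is injective, so $\ord(\theta^i)$ equals the order $m''$ of the induced automorphism.

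For $m'=\gcd(h,m)$ the underlying fact is the same but the packaging is genuinely different. The relation you derive by hand, $\beta\tau\beta^{-1}=\tau^{c}$ with $c=\zeta^{h}$, is precisely the content of Serre, \emph{Local Fields}, Ch.\ IV, Proposition 9 (the congruence modulo $G_{h+1}$ stated there is an exact identity in this situation, because $P\cong\Z/p$ has the single jump $h$, so $G_{h+1}=1$). The paper invokes that proposition as a black box, applies it to the two elements $\beta^{m/\gamma}$ (where $\gamma=\gcd(h,m)$) and $\beta^{m''}$, and extracts the two divisibilities $\gamma\mid m'$ and $m'\mid\gamma$. You instead reprove the proposition: linearize $\beta$ on a uniformizer by averaging (legitimate since $p\nmid m$ and the Galois action fixes $k$), expand $\tau(t)=t+a\,t^{h+1}+\cdots$, and compare leading coefficients; the identity then falls out in a single stroke as $m''=\ord(\zeta^{h})=m/\gcd(h,m)$, hence $m'=\gcd(h,m)$. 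Your version buys self-containedness and a cleaner endgame (one order computation instead of two applications of the conjugation formula plus divisibility bookkeeping); the paper's citation buys brevity, and it silently disposes of the two points you rightly flag as the crux --- the uniformizer-independence of $h=v_L(\tau(t)-t)-1$ and the linearization of the tame generator --- since both are part of the standard ramification formalism in Serre. Both arguments are complete and establish the lemma.
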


\begin{proof}
Set $\gamma\coloneqq (h,m)$. Applying \cite[Chapter IV, Proposition 9]{Serre_loc} to the conjugation of $\tau$ by $\beta^{\frac{m}{\gamma}}$ and $\beta^{m''}$, we get that $\beta^{\frac{m}{\gamma}} \tau \beta^{-\frac{m}{\gamma}} = \tau$ and $\beta^{m''h}=1$.  So $m''$ divides $\frac{m}{\gamma}=\frac{m'm''}{\gamma}$ and $m'm''$ divides $m''h$. So $\gamma$ divides $m'$ and $m'$ divides $(h,m)=\gamma$. Thus $\gamma=m'$.

Note that the conjugation by $\theta$ has order $p-1$ (Proposition \ref{prop2.1}). Since $\theta$ also has order $p-1$, $\theta^j$ commutes with $\tau$ if only if $\theta^j$ is the identity. Note that $\beta=\theta^i \cdot \omega$ and $\omega$ commutes with $\tau$. So the conjugation by $\beta$ is the same as the conjugation by $\theta^i$. Hence $(\theta^i)^k$ does not commute with $\tau$ for $0<k<m''$, and $(\theta^i)^{m''}$ commutes with $\tau$. So $m''$ is the order of $\theta^i$.
\end{proof}

Since $\tau$ and $\theta$ are elements in $S_p$, the action of $I$ on $\{p+1,\cdots ,d\}$ is the same as that of $\beta$ and hence is the same as the action of $\omega$. Suppose that the action of $\beta$ on $\{p + 1, \cdots, d\}$ breaks into $t$ disjoint cycles of lengths $n_1$, $\cdots$, $n_t$ (equivalently, the disjoint cycle decomposition of $\omega \in \text{Sym} \{p+1,\cdots,d\}$ consists of $t$ disjoint cycles of length $n_1$, $\cdots$, $n_t$). The following result was proved in \cite{survey_paper} with the assumption that the order of the group $G$ is strictly divisible by $p$. We note that the same proof works without this assumption on $|G|$ in view of Proposition \ref{prop2.1}.

\begin{lemma}\label{lem2.2}
(cf. \cite[Proposition 4.16]{survey_paper}) Under the above notation the fibre of $\psi \colon Y \rightarrow \mathbb{P}^1$ above $\infty$ consists of $t+1$ points with ramification indices of order $p$, $n_1$, $\cdots$, $n_t$.
\end{lemma}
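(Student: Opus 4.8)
The plan is to analyze the fibre of $\psi \colon Y \to \PP^1$ over $\infty$ by relating it to the orbit structure of the inertia group $I$ acting on the set $\{1, \cdots, d\}$. Recall that $\psi$ is the normalization of $\PP^1$ in an intermediate field, so $Y$ corresponds to the cosets of a point stabilizer $S_{d-1} \cap G$ in $G$ (the stabilizer of the symbol $1$, say), and $\widetilde{Y} \to Y$ is the quotient by this stabilizer. The points of $\widetilde{Y}$ over $\infty$ form a single $G$-orbit, and fixing one such point $y$ with inertia group $I$, the points of $Y$ over $\infty$ are in bijection with the orbits of $I$ acting on $\{1, \cdots, d\}$ (equivalently, the double cosets, but since everything is transitive this reduces to counting $I$-orbits on the $d$ letters). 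The ramification index at each point of $Y$ equals the size of the corresponding $I$-orbit.

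\textbf{Counting the orbits and their sizes.} First I would use the structure of $I$ established above: $I = P \rtimes \mu_m$ with $P = \langle \tau \rangle$, $\tau = (1, \cdots, p)$, and $\beta = \theta^i \omega$ a generator of $\mu_m$, where $\theta \in \Sym(\Supp(\tau)) = S_p$ and $\omega \in \Sym(\Supp(\tau)^c) = \Sym\{p+1, \cdots, d\}$. The key observation is that $\Supp(\tau) = \{1, \cdots, p\}$ and $\Supp(\tau)^c = \{p+1, \cdots, d\}$ are each invariant under all of $I$: indeed $\tau$ fixes $\{p+1, \cdots, d\}$ pointwise, and $\beta = \theta^i \omega$ respects the decomposition since $\theta^i$ acts only on $\{1, \cdots, p\}$ and $\omega$ only on $\{p+1, \cdots, d\}$. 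On the block $\{1, \cdots, p\}$, the subgroup $P = \langle \tau \rangle$ already acts transitively (as $\tau$ is a $p$-cycle), so this whole block is a single $I$-orbit of size $p$, contributing one point of $Y$ with ramification index of order $p$. On the complementary block $\{p+1, \cdots, d\}$, the element $\tau$ acts trivially, so the $I$-action there factors through the image of $\beta$, i.e. through $\omega$; hence the $I$-orbits on $\{p+1, \cdots, d\}$ are exactly the orbits of $\langle \omega \rangle$, which are precisely the $t$ disjoint cycles of $\omega$, of lengths $n_1, \cdots, n_t$. This yields $t$ additional points of $Y$ with ramification indices $n_1, \cdots, n_t$, giving $t+1$ points in total with the asserted ramification indices.

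\textbf{Justifying the orbit-to-fibre dictionary.} The step I would be most careful about is the precise translation between $I$-orbits on $\{1, \cdots, d\}$ and points of $Y$ over $\infty$ together with their ramification indices. This is the standard correspondence for a subcover $Y = \widetilde{Y}/H$ where $H = \mathrm{Stab}_G(1)$ is the point stabilizer realizing $Y$ as the degree-$d$ cover: points of $Y$ over $\infty$ correspond to $H$-$I$ double cosets in $G$, and since $G$ acts transitively on $\{1, \cdots, d\}$ with $H$ the stabilizer of $1$, these double cosets are in canonical bijection with the $I$-orbits on $\{1, \cdots, d\}$. The ramification index at the point of $Y$ corresponding to an orbit $\mathcal{O}$ is the index $[I : I \cap H^g]$ for an appropriate $g$, which works out to $|\mathcal{O}|$. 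Since Proposition \ref{prop2.1} guarantees the clean product decomposition $\langle \theta^i, \omega \rangle$ respecting $\Supp(\tau)$ and its complement, and since $I$ has a unique wild part generated by $\tau$, the orbit on $\Supp(\tau)$ has size exactly $p$ (so its ramification index has order $p$) while the remaining orbits, being $\langle \omega \rangle$-orbits, are tame of sizes $n_j$.

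\textbf{Main obstacle.} The conceptual content is light once the orbit dictionary is in place; the main obstacle is verifying that $\Supp(\tau)$ is genuinely a single $I$-orbit rather than splitting, and that no orbit straddles the two blocks. Both follow from the fact that $\tau$ is a $p$-cycle on $\{1, \cdots, p\}$ fixing everything else, together with $\beta = \theta^i \omega$ preserving the block decomposition --- a fact furnished directly by Proposition \ref{prop2.1}. This is exactly why the proof of \cite[Proposition 4.16]{survey_paper} carries over verbatim even without the hypothesis that $p$ strictly divides $|G|$: the argument only uses the orbit structure of $I$ on the $d$ symbols, which Proposition \ref{prop2.1} controls independently of the global order of $G$.
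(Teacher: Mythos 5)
Your proposal is correct and is essentially the paper's own argument: the paper writes no proof here, deferring instead to \cite[Proposition 4.16]{survey_paper}, whose proof is exactly your orbit/double-coset dictionary identifying points of $\psi^{-1}(\infty)$ with $I$-orbits on $\{1,\cdots,d\}$ and ramification indices with orbit sizes. Your closing observation---that Proposition \ref{prop2.1} supplies the block decomposition $\beta=\theta^i\omega$ and is what lets the argument run without assuming $p$ strictly divides $|G|$---is precisely the remark the paper makes in place of a proof.
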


Since the action of $\beta$ on $\{p+1,\cdots , d\}$ is the same as that of $\omega \in H$, $\omega$ is a product of $t$ disjoint cycles of lengths $n_1$, $\cdots$, $n_t$. Set $L \coloneqq \text{ L.c.m. } (n_1,\cdots,n_t)$. Also since $\beta$ has order prime-to-$p$, for each $i$, $(p,n_i)=1$. Let $g(Y)$ be the genus of the curve $Y$. Using the Riemann-Hurwitz formula and Hilbert's different formula (\cite[IV, Proposition 4]{Serre_loc}) to the Galois covers $\psi \colon Y \to \PP^1$ and to $\widetilde{Y} \to Y$ (a technique used in \cite[Proposition 1.3]{9} when $p$ strictly divides the order of $G$), we obtain
\begin{equation}\label{eq1}
\sigma=\frac{h}{m}=\frac{2g(Y)+d+t-1}{p-1}.
\end{equation}

By Lemma \ref{lem2.1}, $m''$ is the smallest positive integer such that $m''\cdot \sigma$ is again an integer. So we have
\begin{equation}\label{eq2}
m''=\frac{p-1}{(p-1,2g(Y)+d+t-1)}.
\end{equation}

Since order of $\theta^i$ is $m''$ by Lemma \ref{lem2.1}, and $m' m''=m=\text{ l.c.m. }(\text{ord }(\theta^i), \text{ord }(\omega))$, we deduce that
\begin{equation}\label{eq3}
m'=\frac{L}{(L,m'')}.
\end{equation}

Let us summarize the above observations in the following Theorem.

\begin{theorem}\label{thm_grp}
Let $p$ be an odd prime, $d \geq p+2$, and $G$ be a transitive subgroup of $S_d$. Let $\tau \in G$ be a $p$-cycle, $P=\langle \tau \rangle$. Let $\phi \colon \widetilde{Y} \rightarrow \mathbb{P}^1$ be a $G$-Galois cover \'{e}tale away from $\infty$, with inertia group $I=P \rtimes \mu_m$ above $\infty$, $(p,m)=1$. Let $Y$ denote the quotient $\widetilde{Y}/(G\cap S_{d-1})$, and $\psi \colon Y \rightarrow \mathbb{P}^1$ be the degree-$d$ cover. If $\beta$ is a generator of the cyclic group $\mu_m$, then $\beta$ is of the form $\theta^i \cdot \omega$, for some $1\le i\le p-1$, where $\theta$ and $\omega$ are as in Proposition \ref{prop2.1}. There are prime-to-$p$ integers $n_1$, $\cdots$, $n_t$, $t \geq 1$ ($t \geq 2$ when $p|d$), such that $\omega$ is a disjoint product of $t$-many cycles of length $n_i$'s, and there are exactly $t+1$ points in the fibre of $\psi$ above $\infty$ with ramification indices $p$, $n_1$, $\cdots$, $n_t$. Moreover, the the upper jump $\sigma= h/m$, and the integers $m''$, $m'$ are described by the Equations \ref{eq1}, \ref{eq2} and \ref{eq3}, respectively.
\end{theorem}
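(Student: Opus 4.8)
The plan is to recognize that Theorem \ref{thm_grp} is a consolidation of the facts established in the preceding discussion, so that most of the work is simply to assemble already-proven pieces, with a single genuinely new combinatorial observation (the lower bound on $t$ when $p \mid d$). First I would invoke Proposition \ref{prop2.1}: since $\beta$ generates $\mu_m$, has prime-to-$p$ order, and normalizes $\langle \tau \rangle$ with $\tau = (1, \dots, p)$, its last assertion yields the decomposition $\beta = \theta^i \omega$, where $\theta \in \Sym(\Supp(\tau)) = S_p$ has order $p-1$ and $\omega \in \Sym(\{p+1, \dots, d\})$. I would then set $n_1, \dots, n_t$ to be the lengths of the disjoint cycles of $\omega$ on $\{p+1, \dots, d\}$ and apply Lemma \ref{lem2.2} to conclude that the fibre of $\psi$ over $\infty$ consists of exactly $t+1$ points with ramification indices $p, n_1, \dots, n_t$.

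The two lower bounds on $t$ come next. For $t \geq 1$, I would observe that $d \geq p+2$ makes $\{p+1, \dots, d\}$ nonempty, and every permutation of a nonempty finite set has at least one cycle in its disjoint-cycle decomposition (a fixed point being counted as a $1$-cycle). For the sharper bound $t \geq 2$ when $p \mid d$, the crucial input is the degree identity $p + n_1 + \cdots + n_t = d$, which holds because over $\infty$ the ramification indices of the degree-$d$ cover $\psi$ sum to $d$ (the field $k$ being algebraically closed, all residue degrees are $1$). Since $\omega$ has prime-to-$p$ order, each $n_i$ is prime to $p$; thus if $t = 1$ we would have $n_1 = d - p$, and $p \mid d$ would force $p \mid n_1$, contradicting $(n_1, p) = 1$. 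Hence $t \geq 2$ whenever $p \mid d$.

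It then remains only to record the quantitative identities. The formula $\sigma = h/m = (2g(Y) + d + t - 1)/(p-1)$ is Equation \ref{eq1}, which follows from applying the Riemann--Hurwitz and Hilbert different formulas to $\psi$ and to $\widetilde{Y} \to Y$; the expression for $m''$ in Equation \ref{eq2} is then immediate from Lemma \ref{lem2.1}, since $m''$ is the least positive integer making $m'' \sigma$ an integer; and the expression for $m'$ in Equation \ref{eq3} follows from $m = m' m'' = \mathrm{l.c.m.}(\ord(\theta^i), \ord(\omega))$ together with $\ord(\theta^i) = m''$ (Lemma \ref{lem2.1}) and $\ord(\omega) = L$. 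None of the assembly should be delicate; the only step needing an argument beyond citation is the divisibility contradiction yielding $t \geq 2$ for $p \mid d$, and even that is brief once the identity $\sum_i n_i = d - p$ and the coprimality $(n_i, p) = 1$ are in place.
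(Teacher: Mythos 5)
Your proposal is correct and follows essentially the same route as the paper: Theorem \ref{thm_grp} is presented there as a summary of the preceding discussion (Proposition \ref{prop2.1} for $\beta=\theta^i\omega$, Lemma \ref{lem2.2} for the fibre structure, Lemma \ref{lem2.1} together with Equations \ref{eq1}, \ref{eq2} and \ref{eq3} for $\sigma$, $m''$ and $m'$), which is exactly what you assemble. Your explicit degree-count argument for $t \geq 2$ when $p \mid d$ (from $p + n_1 + \cdots + n_t = d$ and $(n_i,p)=1$) is a useful addition, since the paper asserts this bound without spelling out the reason.
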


\section{Formal patching}\label{sec_patching}
We fix the following notation.

\begin{notation}\label{not+}
Let $R=k[[t]]$, $\widetilde{K}=k((t))$, $U=\text{Spec}(k[[x^{-1}]])$. Let $b$ be the closed point of $U$. For any $k$-algebra $A$ with $K=QF(A)$ and any $k$-scheme $W$, let $W_A \coloneqq W \times_k A$, $W_K \coloneqq W_A \times_A K$, and for any closed point $w \in W$, $w_A \coloneqq w \times_k A$, $w_K \coloneqq w_A \times_A K$. For an $R$-scheme $V$, let $V^0$ denote the closed fiber of $V\to \spec(R)$. 
\end{notation}

Given a Galois cover $f:Y\to X$ with inertia group $I$ at a point $y\in Y$ and $f(y)=x$, we deform the local $I$-Galois cover at $x$ to obtain a new cover with desired local properties. We use a formal patching argument (cf. \cite[Theorem 2]{2} or \cite[Theorem 3.6]{AC_embed_Harbater}) to first get a cover over $X_R$ and then use a Lefschetz type principle to obtain Galois covers of smooth projective $k$-curves.

\begin{lemma}\label{lem_formal_patching}
(Formal Patching) Let $G$ be a finite group and $X$ be an irreducible smooth projective $k$-curve. Let $I_1 \subset I$ and $H$ be subgroups of $G$ such that $G = \langle H,I \rangle$. Assume that there is an irreducible $H$-Galois cover $\phi \colon Y \rightarrow X$ of smooth projective curves branched only at a point $x \in X$, with inertia group $I_1$ at a point $y$ in $Y$ above $x$. Let $U_{X,x} = \spec(\widehat{\mathcal{O}}_{X,x})$, $K_{X,x} = QF(\widehat{\mathcal{O}}_{X,x})$ and $L = QF(\widehat{\mathcal{O}}_{Y,y})$. Let $b_x$ denote the closed point of $U_{X,x}$. Let $Z \rightarrow U_{X,x,R}$ be an irreducible $I$-Galois cover of integral $R$-schemes totally ramified over $b_{x,R}$ such that the normalization of the pullback of $Z^0 \rightarrow U_{X,x}$ to $\spec(K_{X,x})$ is isomorphic to $\Ind_{I_1}^{I} \spec(L)$ as the $I$-Galois covers of $\spec(K_{X,x})$. Then there is a normal $G$-Galois cover $V \rightarrow X_R$ of irreducible $R$-curves such that the following hold.
\begin{enumerate}
\item $V \rightarrow X_R$ is \'{e}tale away from $x_R$ with inertia group $I$ above $x_R$;
\item $V \times_{X_R} U_{X,x,R} \cong \Ind_I^G Z$ and if $X'=X \setminus \{x\}$ then $V \times_{X_R} X' _R \cong \Ind_H^G Y_R'$ where $Y'=\phi^{-1}(X')$;
\item the closed fibre $V^0$ is connected and the normalization of the pullback of $V^0\rightarrow X$ to $\spec(K_{X,x})$ is isomorphic to $\Ind_{I_1}^G \spec(L)$ as $G$-Galois cover of $\spec(K_{X,x})$.
\end{enumerate}
\end{lemma}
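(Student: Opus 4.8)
The plan is to construct $V$ by Harbater's two-piece formal patching (cf. \cite[Theorem 2]{2}, \cite[Theorem 3.6]{AC_embed_Harbater}): I glue a cover supported near $x_R$ to a cover away from $x_R$ along the punctured formal disk. For the piece away from $x$, set $X'=X\setminus\{x\}$ and $Y'=\phi^{-1}(X')$; since $\phi$ is branched only at $x$, the cover $Y'\to X'$ is \'{e}tale $H$-Galois and connected, and I take $V_2\coloneqq \Ind_H^G Y'_R$ over $X'_R$, where $Y'_R=Y'\times_k R$ is the trivial deformation. For the piece near $x$, I take $V_1\coloneqq \Ind_I^G Z$ over $U_{X,x,R}$. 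Then $V_2\to X'_R$ is \'{e}tale, while $V_1\to U_{X,x,R}$ is totally ramified over $b_{x,R}$ with inertia $I$, inherited from $Z$.

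To apply the patching theorem I must supply a $G$-equivariant isomorphism between the restrictions of $V_1$ and $V_2$ to the overlap $\spec(K_{X,x})_R\coloneqq U_{X,x,R}\times_{X_R}X'_R$, the punctured formal disk over $R$. Over this overlap both covers are finite \'{e}tale, and since the relevant coordinate ring is complete local with residue field $K_{X,x}$, such a cover is determined up to $G$-isomorphism by its closed fibre over $\spec(K_{X,x})$; thus it suffices to match closed fibres. The local structure of $\phi$ at $y$ gives $Y'|_{\spec(K_{X,x})}\cong \Ind_{I_1}^H\spec(L)$, so by transitivity of induction $V_2|_{\spec(K_{X,x})}\cong \Ind_H^G\Ind_{I_1}^H\spec(L)\cong \Ind_{I_1}^G\spec(L)$. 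By hypothesis the normalization of $Z^0|_{\spec(K_{X,x})}$ is $\Ind_{I_1}^I\spec(L)$, so likewise $V_1|_{\spec(K_{X,x})}\cong \Ind_I^G\Ind_{I_1}^I\spec(L)\cong \Ind_{I_1}^G\spec(L)$. The two closed fibres coincide, so I fix a $G$-equivariant gluing isomorphism compatible with these identifications and patch, obtaining a normal $G$-Galois cover $V\to X_R$ satisfying the isomorphisms of (2) by construction. Item (1) is then immediate: $V$ is \'{e}tale over $X'_R$, and over $x_R$ (the image of $b_{x,R}$) the inertia is that of $Z$, namely $I$.

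For (3), the identification $V\times_{X_R}U_{X,x,R}\cong\Ind_I^G Z$ shows that the normalization of $V^0$ pulled back to $\spec(K_{X,x})$ is the common value $\Ind_{I_1}^G\spec(L)$ computed above, so it remains to prove $V^0$ is connected. Let $C$ be a connected component of $V^0$ and $D=\mathrm{Stab}_G(C)$; then $C\to X$ is a connected $D$-Galois cover, and $V^0$ is connected precisely when $D=G$. Because the gluing isomorphism was chosen to respect the common identification with $\Ind_{I_1}^G\spec(L)$, the (connected) identity copy of $Y'$ in $V_2^0=\Ind_H^G Y'$ and the identity copy of $Z^0$ in $V_1^0=\Ind_I^G Z^0$ are glued along their shared $\spec(L)$-component over the overlap, hence lie in a single component $C$. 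Since $H$ fixes the former and $I$ fixes the latter setwise, $D\supseteq\langle H,I\rangle=G$; therefore $V^0$ is connected and $V$, being normal, is irreducible.

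The routine parts — constructing $V_1,V_2$, computing the overlap restrictions via transitivity of induction, and reading off (1) and (2) — all follow formally from the patching theorem. The main obstacle is the connectedness of $V^0$: one must track the $G$-action on the components of the special fibre carefully enough to see that the identity copies of $Y'$ and $Z^0$ are genuinely glued into one component, for this is precisely the point at which the hypothesis $G=\langle H,I\rangle$ is converted into irreducibility of $V^0$.
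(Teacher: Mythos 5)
Your proposal is correct and follows essentially the same route as the paper: the same two induced pieces $\Ind_H^G Y'_R$ and $\Ind_I^G Z$, the same matching of the two overlap fibres with $\Ind_{I_1}^G\spec(L)$ via coset-compatible identifications lifted to $\spec(K_{X,x,R})$ by completeness, the same appeal to Harbater's patching result, and the same stabilizer-of-the-identity-component argument converting $G=\langle H,I\rangle$ into connectedness. The only (harmless) difference is that you run the stabilizer argument on the closed fibre $V^0$ and deduce irreducibility of $V$ from it, whereas the paper argues irreducibility of $V$ directly and leaves the connectedness of $V^0$ in item (3) implicit.
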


\begin{proof}
Consider the trivial deformation $Y'_R \rightarrow X'_R$ of the $H$-Galois cover $Y' \rightarrow X'$. Taking disjoint union of $[G:H]$-copies of $Y'_R$, we obtain a (disconnected) normal $G$-Galois cover $W_1 \coloneqq \Ind_{H}^G Y'_R \rightarrow X'_R$, in which the stabilizers of the components are the conjugates of $H$ in $G$. Now taking disjoint union of $[G:I]$-copies of $Z$, we obtain a (disconnected) normal $G$-Galois branched cover $W_2 \coloneqq \Ind_{I}^G Z \rightarrow U_{X,x,R}$, in which the stabilizers of the components are the conjugates of $I$ in $G$. The fibre of $W_1^0$ over $\spec(K_{X,x})$ is given by $\Ind_H^G \Ind_{I_1}^H \spec(L)$ and that of $W_2^0$ over $\spec(K_{X,x})$ is given by $\Ind_I^G  \Ind_{I_1}^{I} \spec(L)$. Since both of these covers are indexed by left cosets of $I_1$ in $G$, we can choose an isomorphism between these fibres that is compatible with the indexing, and hence with the $G$-action. Since the pullbacks of $W_1 \rightarrow X'_R$ and $W_2 \rightarrow U_{X,x,R}$ to $\spec(K_{X,x,R})$ are \'{e}tale, by \cite[I, Corollary 6.2]{10}, they are trivial deformations of the pullbacks of $W_1^0 \rightarrow X'$ and $W_2^0 \rightarrow U_{X,x}$ to $\spec(K_{X,x})$. So the above isomorphism lifts uniquely to a $G$-isomorphism $W_1 \times_{X_R'} K_{X,x,R} \cong W_2 \times_{U_{X,x,R}} K_{X,x,R}$ over $\spec(K_{X,x,R})$. By \cite[Proposition 4(b)]{2}, there exists a unique normal $G$-Galois cover $V \rightarrow X_R$ such that $V\times_{X_R} X'_R \cong W_1$ and $V\times_{X_R} U_{X,x,R}\cong W_2$ as covers of $X'_R$ and $U_{X,x,R}$ respectively. Since $W_2$ has branch locus $x_R$ and $W_1$ is \'{e}tale, $V \rightarrow X_R$ has branch locus $\{x_R\}$. Also since $Z \rightarrow U_{X,x,R}$ is totally ramified above $b_{x,R}$, we have $\Gal(V \times_{X_R} (K_{X,x} \otimes_k \widetilde{K}))/(K_{X,x} \otimes_k \widetilde{K})) = \Gal(Z \times_{U_{X,x,R}} (K_{X,x} \otimes_k \widetilde{K}))/(K_{X,x} \otimes_k \widetilde{K})) = I$. So the inertia group above $x_R$ is $I$.  Finally, since the the stabilizers of the identity components of $W_1 \rightarrow X'_R$ and of $W_2 \rightarrow U_{X,x,R}$ are $H$ and $I$ respectively, the stabilizer of the identity component of $V \rightarrow X_R$ is $\langle H, I \rangle = G$. So $V$ is irreducible.
\end{proof}

\begin{lemma}\label{lem_dense_realization}
Let $X$ be an irreducible smooth projective $k$-curve, and $x \in X$ be a closed point. Let $U_{X,x} = \spec(\widehat{\mathcal{O}}_{X,x})$ and $K_{X,x} = QF(\widehat{\mathcal{O}}_{X,x})$. Let $b_x$ denote the closed point of $U_{X,x}$. For $1 \leq i \leq r$, let $G_i$ be a finite group and let $I_i$ be a subgroup of $G_i$. Assume that for each $1 \leq i \leq r$, there is an $I_i$-Galois cover $\phi_i \colon \widetilde{S}_i \rightarrow U_{X,x,k[t]}$ of integral schemes with branch locus $b_{x,k[t]}$ over which it is totally ramified. For those closed points $\beta \in \A^1_t$ where the fiber $\widetilde S_{i,\beta}$ of $\widetilde S_i\to \A^1_t$ is integral, let $M^{(i)}_{\beta}/K_{X,x}$ denote the field extension corresponding to the cover $\widetilde S_{i,\beta}\to U_{X,x}$ at $t=\beta$. For $1\le i \le r$, let $Z_i \coloneqq \widetilde{S}_i \times_{U_{X,x,k[t]}} U_{X,x,R}$, and $V_i \rightarrow X_R$ be a normal $G_i$-Galois cover of irreducible $R$-curves \'{e}tale away from $x_R$ with inertia group $I_i$ above $x_R$ such that $V_i \times_{X_R} U_{X,x,R} \cong \Ind_{I_i}^{G_i} Z_i$. Then there is an open dense subset $\mathcal{V}$ of $\mathbb{A}^1_t$ such that for all closed points $(t=\beta)$ in $\mathcal{V}$ the following holds. For each $1\leq i \leq r$, there is a $G_i$-Galois cover $W_i \rightarrow X$ branched only at $x$ with inertia group $I_i$ at a point in $W_i$ above $x$ such that the local $I_i$-Galois extension corresponding to the formal neighbourhood at that point is $M^{(i)}_\beta/K_{X,x}$.
\end{lemma}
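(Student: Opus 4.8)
The strategy is the Lefschetz-type principle announced in the introduction: algebraize each cover $V_i\to X_R$ from the complete base $R=k[[t]]$ to a cover over an \'etale neighbourhood of the origin of $\mathbb{A}^1_t$, and then specialize the parameter $t$ to a closed point $\beta$ ranging over a suitable open dense subset. The essential point to keep in view throughout is that the local model must be carried along \emph{unchanged}, so that after specialization the local extension at the chosen point is exactly $M^{(i)}_\beta/K_{X,x}$ rather than some approximation of it.

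Fix $i$. I would consider the functor on $\mathcal{O}_{\mathbb{A}^1_t,0}$-algebras $A$ whose value is the set of isomorphism classes of normal $G_i$-Galois covers of $X_A$, \'etale away from $x_A$, equipped with an isomorphism of their restriction to $U_{X,x,A}$ with $\Ind_{I_i}^{G_i}(\widetilde{S}_i\times_{U_{X,x,k[t]}}U_{X,x,A})$. A finite cover, its $G_i$-action, its branch locus, and such an isomorphism are all finite-type data, so this functor is locally of finite presentation, and the pair consisting of $V_i$ together with the given isomorphism $V_i\times_{X_R}U_{X,x,R}\cong\Ind_{I_i}^{G_i}Z_i$ is a formal point of it over $R$. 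By Artin's approximation theorem there is a point over the henselization $\mathcal{O}^{h}_{\mathbb{A}^1_t,0}$ agreeing with $V_i$ to arbitrarily high $t$-adic order, and hence, after spreading out and shrinking, a normal $G_i$-Galois cover $\mathcal{W}_i\to X_B$ over some \'etale neighbourhood $B\to\mathbb{A}^1_t$ of $0$, \'etale away from $x_B$, whose restriction to $U_{X,x,B}$ is $\Ind_{I_i}^{G_i}(\widetilde{S}_i\times_{U_{X,x,k[t]}}U_{X,x,B})$. Since the local model is held fixed and $\widetilde{S}_i$ is totally ramified over $b_{x,k[t]}$, the inertia of $\mathcal{W}_i$ above $x_B$ is again $I_i$; and since the approximation matches $V_i$ modulo a fixed power of $t$, the generic fibre of $\mathcal{W}_i\to B$ is geometrically connected.

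Finally I would specialize. For a closed point $b\in B$ lying over $\beta\in\mathbb{A}^1_t$ (with residue field $k$, as $k$ is algebraically closed), set $W_i\coloneqq \mathcal{W}_{i,b}$. Each required property of the fibre is an open or constructible condition on $b$ that holds at the generic point of $B$: the integrality of $\widetilde{S}_{i,\beta}$ (so that $M^{(i)}_\beta$ is defined), the geometric connectedness of $W_i$, the statement that $W_i\to X$ is a $G_i$-Galois cover branched only at $x$ (as $\mathcal{W}_i\to X_B$ is \'etale off $x_B$), and inertia $I_i$ at a point over $x$. Over such $b$ the restriction of $W_i$ to $U_{X,x}$ is $\Ind_{I_i}^{G_i}\widetilde{S}_{i,\beta}$, so the local $I_i$-Galois extension at the chosen point is precisely $M^{(i)}_\beta/K_{X,x}$. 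Let $\mathcal{V}_i\subseteq\mathbb{A}^1_t$ be an open dense subset contained in the image of $B$ over which all these conditions hold; then $\mathcal{V}\coloneqq\bigcap_{i=1}^{r}\mathcal{V}_i$ is open dense and has the asserted property.

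The main obstacle is the algebraization step. One must pass from the complete base $k[[t]]$ to an algebraic, henselian and then \'etale, neighbourhood while preserving \emph{exactly} the local $I_i$-Galois structure, so that the specialized local extension is literally $M^{(i)}_\beta$, and while simultaneously retaining the inertia group $I_i$ and the connectedness of $V_i$. Building the fixed local model $\Ind_{I_i}^{G_i}\widetilde{S}_i$ into the functor, and thereby approximating only the remaining datum (the cover over $X\setminus\{x\}$ together with its gluing to the local model), is what makes the exactness of the local extension possible; the retention of inertia and of connectedness then follows because both are detected by the cover modulo a fixed power of $t$, which the approximation can be taken to match.
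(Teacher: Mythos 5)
Your route is genuinely different from the paper's. The paper never invokes Artin approximation: it observes that the isomorphism $V_i \times_{X_R} U_{X,x,R} \cong \Ind_{I_i}^{G_i} Z_i$ is an isomorphism of coherent sheaves of algebras, hence is given by matrices involving only finitely many elements of $R$, so that $V_i$ together with this isomorphism descends \emph{exactly} (not approximately) to a finitely generated $k[t]$-subalgebra $A \subset R$ with smooth connected spectrum $E$; the resulting covers $F_i \rightarrow X_A$ satisfy $F_i \times_A R \cong V_i$ on the nose, and $\mathcal{V}$ is simply the image of the dominant finite-type map $E \rightarrow \mathbb{A}^1_t$, open and dense by Chevalley. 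Your functorial reformulation, in which the fixed local model $\Ind_{I_i}^{G_i}\widetilde{S}_i$ is part of the data, does capture the paper's key structural insight (this is exactly what makes the specialized local extension literally $M^{(i)}_\beta$), and your limit-preservation claim is the same finiteness observation. The difference is that the paper applies the limit argument to $R$ itself (a ring is the filtered colimit of its finitely generated subrings), which gives exact descent and lets every property of $V_i$ be inherited for free, whereas you approximate first and must then recover those properties from a congruence modulo $t^N$.

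That is where your proof has a genuine gap: you justify geometric connectedness of the generic fibre of $\mathcal{W}_i \rightarrow B$ by the principle that connectedness ``is detected by the cover modulo a fixed power of $t$.'' That principle is false for proper families. The congruence controls the special fibre; connectedness of the special fibre gives (via properness over the henselian local base, plus normality) that $\mathcal{W}_i$ is irreducible and its generic fibre is connected, but \emph{not} that the generic fibre is geometrically connected --- and geometric connectedness is precisely what you need, since a connected but not geometrically connected generic fibre forces the fibres over a dense open set of closed points to be disconnected. For example, with $p$ odd, $X \times_k \spec(k[[s]]) \rightarrow X_R$, $t \mapsto s^2$, is a normal, irreducible, proper $\mathbb{Z}/2$-cover whose special fibre is connected while its geometric generic fibre is two copies of $X$; it is ramified along the vertical divisor $(t=0)$. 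What rules this behaviour out in your setting is not the congruence but the \'{e}taleness constraint built into your functor: the Stein factorization $B'$ of $\mathcal{W}_i \rightarrow B$ is finite and normal over $B$, and any ramification of $B' \rightarrow B$ would make $\mathcal{W}_i \rightarrow X_B$ ramified along a vertical divisor, contradicting \'{e}taleness away from the horizontal divisor $x_B$; hence $B' \rightarrow B$ is finite \'{e}tale, and since it has a single point over the point of $B$ above $0$ (by the congruence, the special fibre there is $V_i^0$, which is connected), it is an isomorphism, so all geometric fibres of $\mathcal{W}_i \rightarrow B$ are connected. With this argument inserted your proof goes through; the paper needs the same kind of fact for its assertion that the closed fibres of $F_i \rightarrow E$ are irreducible, but there it can be pulled back directly from $V_i$ because the descent is exact.
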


\begin{proof}
 Let $1 \leq i \leq r$. Consider the $G_i$-Galois covers $f^{(i)} \colon V_i \times_{X_R} U_{X,x,R} \rightarrow U_{X,x,R}$ and $g^{(i)} \colon \Ind_{I_i}^{G_i} Z_i \rightarrow U_{X,x,R}$. Since $f^{(i)}$ and $g^{(i)}$ are finite morphisms and are $G$-Galois covers, the isomorphism $V_i \times_{X_R} U_{X,x,R} \cong \Ind_{I_i}^{G_i} Z_i$ is equivalent to a $G_i$-equivariant isomorphism of coherent sheaves $f^{(i)}_*(\mathcal{O}_{V_i \times_{X_R} U_{X,x,R}})$ and $g^{(i)}_*(\mathcal{O}_{\Ind_{I_i}^{G_i} Z_i})$ over $U_{X,x,R}$, and hence it is defined locally by matrices involving only finitely many functions over $U_{X,x,R}$. So there exists a finite type $k[t]$-algebra $A \subset R$ having smooth connected spectrum $E=\spec(A)$ and for each $1 \leq i \leq r$, an irreducible $G_i$-Galois cover $\pi_i \colon F_i \to X_A$ branched only over $x_E$ with inertia group $I_i$ above $x_E$, together with an isomorphism $F_i \times_{X_A} U_{X,x,A} \cong \Ind_{I_i}^{G_i} \widetilde{S}_i \times_{U_{X,x,k[t]}} U_{X,x,A}$ such that $F_i \times_A R \cong V_i$, and the fibre over each closed point of $E$ is irreducible and non-empty. So for each point $e \in E$ and for each $1\leq i \leq r$, $\widetilde{F_{i,e}} \rightarrow X \times_k \{e\} \cong X$ is a $G_i$-Galois cover \'{e}tale away from $x$ with inertia group $I_i$ above $x$, where $\widetilde{F_{i,e}}$ is the normalization of the fibre $F_{i,e} = \pi_i^{-1}(X \times_k \{e\})$. Since the ring map $k[t] \rightarrow A$ is also injective, the finite type map $E \rightarrow \mathbb{A}^1_t$ is flat and dominant. So the image of $E$ in $\mathbb{A}^1_t$ is an open dense set which is our $\mathcal{V}$. Now for every point $\beta \in \mathcal{V}$ with preimage $e_\beta \in E$ and for each $1\leq i \leq r$, the corresponding fibre $W_i \coloneqq \widetilde{F}_{i,e_\beta} \rightarrow X$ is a $G_i$-Galois cover branched only at $x$, and the $I_i$-Galois extension corresponding to the formal neighborhood of a point in $W_i$ lying above $x$ is $M^{(i)}_\beta/K_{X,x}$.
\end{proof}

\begin{remark}
Note that the above two Lemmas \ref{lem_formal_patching}, \ref{lem_dense_realization} can be easily generalized to the case with branch locus containing more than one point.
\end{remark}

The following application of patching result by Raynaud (\cite[Theorem 2.2.3]{3}) will also be used later.

\begin{theorem}(Raynaud)\label{thm_Raynaud}
Let $G$ be a finite group and let $Q$ be a $p$-subgroup of $G$. Let $J$ be a finite set. For each $i \in J$, let $G_i$ be a subgroup of $G$ and $Q_i$ be a subgroup of $G_i \cap Q$. Assume that $G = \langle \{G_i\}_{i \in J}, Q \rangle$. Suppose that for each $i \in J$, there is a connected $G_i$-Galois \'{e}tale cover of the affine line with inertia group $Q_i$ above $\infty$. Then there is a connected $G$-Galois \'{e}tale cover of the affine line with inertia group $Q$ above $\infty$.
\end{theorem}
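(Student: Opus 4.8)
The plan is to deduce the statement from Raynaud's assembly theorem \cite[Theorem 2.2.3]{3} by degenerating $\PP^1$ and patching the given covers on a tree-like special fibre. First I would restrict each $G_i$-Galois cover $\widetilde{Y}_i \to \PP^1$ to $\A^1$ and induce it along $G_i \hookrightarrow G$, producing a (generally disconnected) $G$-Galois \'etale cover $\Ind_{G_i}^{G}(\widetilde{Y}_i|_{\A^1}) \to \A^1$ whose component stabilisers are the conjugates of $G_i$ and in which the wild inertia $Q_i$ sits over $\infty$. This is consistent with the hypotheses: since the quotient of an \'etale-over-$\A^1$ cover by the wild part of its inertia is a tame cover of $\A^1$ and hence trivial, the inertia $Q_i$ is automatically a $p$-group, as befits $Q_i \le G_i \cap Q$ with $Q$ a $p$-group.

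Next I would work over $R = k[[t]]$ and choose a semistable model $\mathcal{B} \to \spec(R)$ of $\PP^1$ whose generic fibre is $\PP^1_{\widetilde{K}}$ and whose closed fibre is a star of projective lines: a central component $B_0$ carrying the specialisation $\infty_0$ of the branch point, joined at distinct nodes $\xi_i$ to leaves $B_i$, one for each $i \in J$. The cover is then assembled by formal patching (Grothendieck existence, exactly as in \cite[I]{10} and Lemma \ref{lem_formal_patching}). Over each leaf $B_i$, with $\xi_i$ playing the role of $\infty$, I place $\Ind_{G_i}^{G}$ of the given $G_i$-cover, so that its wild inertia $Q_i$ is concentrated at the node. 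Over the central component I place $\Ind_{Q}^{G}$ of a $Q$-Galois cover of $B_0 \cong \PP^1$ that is totally ramified at $\infty_0$ with inertia $Q$ and has inertia $Q_i \le Q$ in the branch direction of each $\xi_i$. Because $Q_i \subseteq Q \cap G_i$, the two local pictures at each node are indexed by cosets of $Q_i$ and can be matched $G$-equivariantly, so the patching yields a normal $G$-Galois cover $V \to \mathcal{B}$ that is \'etale away from the section $\infty_R$ and the nodes.

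The generic fibre $V_{\widetilde{K}} \to \PP^1_{\widetilde{K}}$ is then branched only over $\infty$ with inertia $Q$, the vertical node ramification being vanishing-cycle data that disappears on the generic fibre, while the connectedness of $V$ is equivalent to the stabiliser of a chosen identity component being all of $G$, which holds precisely because $G = \langle \{G_i\}_{i\in J}, Q \rangle$. Finally I would descend from the $R$-curve to an actual $k$-curve by the specialisation argument of Lemma \ref{lem_dense_realization}, obtaining a connected $G$-Galois \'etale cover of $\A^1$ with inertia $Q$ above $\infty$. The main obstacle is the construction of the central $Q$-cover together with the compatible node data: one must realise, inside a single $Q$-Galois extension, each prescribed sub-inertia $Q_i$ in the correct branch direction and verify that the patched special-fibre cover lifts with inertia \emph{exactly} $Q$ (and not larger) over the generic branch point. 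This precise bookkeeping of inertia and connectivity is the content of Raynaud's Theorem 2.2.3, which I would invoke after fixing the above dictionary rather than reprove from scratch.
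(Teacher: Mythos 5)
Your proposal is correct and follows essentially the same route as the paper: the paper gives no independent proof of this statement, quoting it directly as Raynaud's result \cite[Theorem 2.2.3]{3}, and your approach---setting up the degeneration/patching dictionary and then invoking that same theorem for the actual inertia and connectivity bookkeeping---amounts to the same citation, augmented by an expository sketch of Raynaud's proof mechanism. The only (inconsequential) slip is your aside that the inertia $Q_i$ is ``automatically'' a $p$-group: a connected Galois \'{e}tale cover of the affine line can well have a nontrivial tame cyclic quotient in its inertia (the tame-quotient argument only shows that the conjugates of the wild part generate the whole group), so $Q_i$ is a $p$-group here purely because the hypothesis places it inside $Q$.
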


\section{Wild Part of the Inertia Conjecture for Alternating groups}\label{sec_alt} 
Let $p$ be an odd prime. In this section we prove the wild part of the inertia conjecture for Alternating group $A_d$ when the inertia group contains a $p$-cycle. Note that $A_d$ is a quasi $p$-group for $d\ge p$ and when $p \leq d < 2p$ the wild part of the conjecture is immediate. So we may assume that $d \geq 2p$. We begin with the following lemma on the Galois groups of local fields. For a field $L$ and a separable polynomial $f(Z)\in L[Z] $, denote the Galois group of the splitting field of $f(Z)$ over $L$ by $\Gal(f,L)$.

\begin{lemma}\label{lem3.3}
Let $h(Z)\in k((x))[Z]$ be an irreducible polynomial of degree $m$ where $(m,p) = 1$. Then $\Gal(h, k((x))) = \mathbb{Z}/m\mathbb{Z}$.
\end{lemma}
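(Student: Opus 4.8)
The plan is to identify the splitting field explicitly by exploiting the fact that $k$ is algebraically closed of characteristic $p$ and that $m$ is prime to $p$. Set $K = k((x))$, let $\alpha$ be a root of $h$, and let $L = K(\alpha)$. Since $h$ is irreducible of degree $m$, we have $[L:K] = m$, and since $(m,p) = 1$ the polynomial $h$ is separable, so $L/K$ is a separable extension. The strategy is to show that this single extension $L/K$ is already Galois with group $\mathbb{Z}/m\mathbb{Z}$; once that is established, the fact that $L$ is Galois and contains one root of the irreducible polynomial $h$ forces it to contain all roots, so $L$ is the splitting field and $\Gal(h, K) = \Gal(L/K) \cong \mathbb{Z}/m\mathbb{Z}$.

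The key structural input is that $L/K$ is \emph{totally} and \emph{tamely} ramified. First I would observe that $K$ is complete with respect to its $x$-adic valuation and has residue field $k$, which is algebraically closed; hence the residue field of $L$ is a finite extension of $k$ and therefore equal to $k$, so the residue degree is $1$ and the ramification index equals the full degree $m$. Since $(m,p) = 1$, this total ramification is tame. I would then invoke the standard structure theorem for tamely totally ramified extensions of a complete discrete valuation field (cf. \cite[Chapter IV]{Serre_loc}): such an extension of degree $m$ is obtained by adjoining an $m$-th root of a uniformizer, i.e. $L = K(\varpi^{1/m})$ for some uniformizer $\varpi$ of $K$.

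The step that deserves the most care, and which I expect to be the crux, is the reduction of $L$ to the explicit Kummer extension $K(x^{1/m}) = k((x^{1/m}))$. Writing $\varpi = x u$ with $u \in k[[x]]^{\times}$, I would argue that $u$ is an $m$-th power in $k[[x]]$: its constant term lies in $k^{\times}$ and admits an $m$-th root since $k$ is algebraically closed, while the principal-unit part $1 + x g(x)$ admits an $m$-th root in $k[[x]]$ by Hensel's lemma (the relevant derivative is a unit precisely because $(m,p) = 1$). Hence $L = K(x^{1/m})$. Because $k$ is algebraically closed and $(m,p)=1$, it contains a primitive $m$-th root of unity $\zeta$, so the $K$-conjugates $\zeta^j x^{1/m}$ of $x^{1/m}$ all lie in $L$; thus $L/K$ is Galois, and sending $x^{1/m} \mapsto \zeta x^{1/m}$ generates a cyclic group of order $m$. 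This gives $\Gal(L/K) \cong \mathbb{Z}/m\mathbb{Z}$ and completes the argument.
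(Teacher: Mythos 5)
Your proof is correct, but it takes a genuinely different route from the paper's. The paper works top-down on the splitting field $L_s$: writing $G=\Gal(L_s/k((x)))=P\rtimes \mu_{m'}$ with $P$ the (normal) wild inertia subgroup, it observes that $G'=\Gal(L_s/L)$ has prime-to-$p$ index $m$, hence contains the unique Sylow $p$-subgroup $P$; therefore $L=L_s^{G'}\subseteq L_s^{P}$, and since $L_s^{P}/k((x))$ is cyclic, every intermediate field --- in particular $L$ itself --- is Galois over $k((x))$, forcing $L=L_s$ with cyclic group of order $m$. You instead work bottom-up on $L=K(\alpha)$ alone: you show it is totally and tamely ramified (residue field $k$ is algebraically closed, and the defect/fundamental identity issue is harmless since any defect is a $p$-power dividing $m$), invoke the structure theorem for tame totally ramified extensions to write $L=K(\varpi^{1/m})$, and then use the algebraic closedness of $k$ plus Hensel's lemma (where $(m,p)=1$ makes the relevant derivative a unit) to normalize the uniformizer and identify $L$ as the Kummer extension $k((x^{1/m}))$, whose cyclicity is visible. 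Both arguments rest on the same underlying local-field theory from \cite{Serre_loc}, but the paper's is shorter, quoting the semidirect-product structure of the full local Galois group and finishing by pure group theory, while yours buys more: it pins down the extension explicitly as $k((x^{1/m}))$ and derives, rather than quotes, the cyclicity via Kummer theory. Your final step (a Galois extension containing one root of an irreducible polynomial contains them all, so $L$ is already the splitting field) coincides with the paper's closing step.
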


\begin{proof}
Let $L = k((x))[Z]/(h(Z))$ and $L_s$ the splitting field of $h$ over $k((x))$. Let $G =\Gal(L_s/k((x)))$ and $G' = \Gal(L_s/L)$. Note that $G = P \rtimes \mu_m$ where $P$ is a $p$-group and $p \nmid m$. Since $[L:k((x))] = m$, $G'$ is an index $m$ subgroup of $G$. But $(m, p) = 1$ implies $P \leq G'$. Hence $L \subset L_s^P$. But $L_s^P/k((x))$ is a Galois extension with cyclic Galois group. Hence $L/k((x))$ is also a cyclic Galois extension. So $L$ is the splitting field of $h$ and $\Gal(L/k((x))) = \mu_m$.
\end{proof}

\begin{definition}\label{def3.1}
Let $G$ be a quasi $p$-group, and $I$ be a subgroup of $G$. Recall that the pair $(G,I)$ is realizable if there exists a $G$-Galois cover $Y \rightarrow \mathbb{P}^1$ branched only at $\infty$, with inertia group $I$ at a point above $\infty$. An $I$-Galois extension $L/k((x^{-1}))$ is said to be realized by the pair $(G,I)$ if there is a $G$-Galois cover of $\mathbb{P}^1$ branched only at $\infty$ with inertia group $I$ above $\infty$ so that the corresponding local $I$-Galois extension is isomorphic to $L/k((x^{-1}))$.
\end{definition}

By Harbater's result (\cite[Theorem 2]{2}), it is enough to show that $(A_d,\langle \tau\rangle)$ is realizable when $\tau$ is a $p$-cycle. For this we use an equation of an $A_d$-cover given by Abhyankar.

\begin{proposition}\label{prop_3.3B}
Let $p$ be an odd prime, $d=p + s \geq p+2$, $p \nmid s$, $(p,s) \neq (7,2)$. Consider the degree $d$ cover $\psi \colon Y \rightarrow \mathbb{P}^1$ given by the affine Equation
\begin{equation}\label{eq5}
x=\frac{1+y^d}{y^s}.
\end{equation}
Let $\phi \colon \widetilde{Y} \rightarrow \mathbb{P}^1$ be its Galois closure. Then $\phi$ is an $A_d$-Galois cover of $\mathbb{P}^1$ branched only at $\infty$, with inertia group $I=\langle \tau \rangle \rtimes \langle \beta \rangle$ above $\infty$ where $\tau$ is a $p$-cycle, and $\beta$ has order $\text{l.c.m.}(\frac{p-1}{(p-1,s+1)},s)$. Furthermore, it has upper jump $\frac{d}{p-1}$.
\end{proposition}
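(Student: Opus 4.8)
The plan is to split the statement into a group-theoretic input---that the Galois group is $A_d$, which I would import from Abhyankar---and a ramification computation at $\infty$, for which Theorem~\ref{thm_grp} does the real work once the geometry is in place. I would first clear denominators to get $f(y)=y^{d}-x\,y^{s}+1\in k(x)[y]$, a trinomial of degree $d$. To see that $\psi$ is branched only at $\infty$, observe that in characteristic $p$ one has $d\equiv s\pmod p$, so $f'(y)=s\,y^{s-1}(y^{p}-x)$; over a finite value of $x$ the only possible critical points are $y=0$ and the roots of $y^{p}=x$, and at every such $y$ one computes $f(y)=1\neq 0$. Hence $f$ and $f'$ have no common root over $\mathbb{A}^{1}_{x}$ and $\psi$ is \'etale there. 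Irreducibility of $f$ over $k(x)$ (so that $G$ is a transitive subgroup of $S_d$) together with the identification $G=A_d$ I would cite from \cite[Section~21]{7}; the hypotheses $p\nmid s$ and $(p,s)\neq(7,2)$ are exactly the ones required there.

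Next I would read the ramification over $\infty$ directly off $x=y^{p}+y^{-s}$. As $y\to\infty$ one has $x\sim y^{p}$, giving one point of ramification index $p$ (wild, as $p\mid p$); as $y\to 0$ one has $x\sim y^{-s}$, giving one point of ramification index $s$ (tame, as $p\nmid s$). These use up $p+s=d=\deg\psi$ sheets, so the fibre of $\psi$ over $\infty$ has exactly two points and, in the notation of Theorem~\ref{thm_grp}, $t=1$ and $n_1=s$. The local degree-$p$ extension at $y=\infty$ has wild inertia $P\cong\mathbb{Z}/p$, which must act transitively on the orbit of size $p$ (a $\mu_m$-orbit cannot have size $p$ since $p\nmid m$) and trivially on the orbit of size $s$ (the latter point being tamely ramified); hence its generator $\tau$ is a $p$-cycle, and correspondingly $\beta=\theta^{i}\omega$ with $\omega$ a single $s$-cycle supported on $\Supp(\tau)^{c}$, as in Proposition~\ref{prop2.1} and Lemma~\ref{lem2.2}.

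It then remains to substitute into Theorem~\ref{thm_grp}. The curve $Y$ is rational, because its function field $k(x,y)$ equals $k(y)$ (since $x=(1+y^{d})/y^{s}\in k(y)$), so $g(Y)=0$; Riemann--Hurwitz for $\psi$ is then satisfied with tame different $s-1$ and wild different exponent $2p+s-1$. Feeding $g(Y)=0$ and $t=1$ into Equation~\eqref{eq1} gives the upper jump $\sigma=d/(p-1)$. For the order of $\beta$, Equation~\eqref{eq2} gives $m''=(p-1)/(p-1,d)$, and since $p\equiv 1\pmod{p-1}$ one has $(p-1,d)=(p-1,s+1)$, whence $m''=(p-1)/(p-1,s+1)$; as $\theta^{i}$ and $\omega$ have disjoint supports they commute, so $\beta$ has order $\text{l.c.m.}(\ord(\theta^{i}),\ord(\omega))=\text{l.c.m.}(m'',s)$, which is the asserted value.

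The main obstacle is the one step I would import rather than prove, the identification $G=A_d$: it rests on the classification of primitive permutation groups containing a cycle of prime length $p$ (a Jordan-type theorem), which forces such a group to contain $A_d$ once the $p$-cycle fixes at least three points, together with a parity/discriminant check ruling out $S_d$. The boundary case $s=2$, where the $p$-cycle fixes only two points, is the delicate one and is exactly where the excluded pair $(7,2)$---for which $PSL_2(8)\subset A_9$ already contains a $7$-cycle---enters. Everything after that is bookkeeping built on Theorem~\ref{thm_grp} and the rationality of $Y$.
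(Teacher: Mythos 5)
Your proposal is correct and follows essentially the same route as the paper: import $G=A_d$ and the branch locus from Abhyankar, identify the two points of $\psi^{-1}(\infty)$ with ramification indices $p$ and $s$, conclude via Proposition \ref{prop2.1} and Lemma \ref{lem2.2} that $I=\langle \tau \rangle \rtimes \langle \theta^i\omega\rangle$ with $\tau$ a $p$-cycle and $\omega$ an $s$-cycle, and then read off $\sigma$, $m''$ and $\ord(\beta)$ from Equations \eqref{eq1} and \eqref{eq2}. The only cosmetic differences are that you verify \'etaleness over $\mathbb{A}^1$ directly from $f_y = s\,y^{s-1}(y^p-x)$ (the paper cites Abhyankar for this too) and replace the paper's Hensel factorization $f=g\cdot h$ with $h$ Eisenstein (plus Lemma \ref{lem3.3}) by an equivalent orbit-theoretic argument that the wild inertia acts trivially on the tame $s$-point orbit and as a $p$-cycle on the other.
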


\begin{proof}
By \cite[Section 11]{7}, $G=A_d$ and the cover $\phi$ is branched only at $\infty$. Set $x_0=x^{-1}$ and consider the local equation of $Y$ near a preimage of $x=\infty$. Then $I$ is the Galois group of the splitting field of $f(y)= x_0 \cdot y^d - y^s + x_0$ over $k((x_0))$. By Hensel's Lemma, $f(y)=g(y) \cdot h(y)$ in $k[[x_0]][y]$ with $g(y) \equiv -1 (\text{ mod } x_0)$ of degree $p$ and $h(y) \equiv y^{s} (\text{ mod } x_0)$ of degree $s$. So we have
$$\text{Gal}(f(y),k((x_0))) \leq \text{Gal} (g(y), k((x_0))) \times \text{Gal} (h(y), k((x_0))).$$
Let $h(y)= b_0 +\cdots + b_s y^{s}$ for some $b_i \in k[[x_0]]$. Then $\text{ord}(b_i) \geq 1$ for $1 \leq i \leq s-1$, and $\text{ord}(b_{s})=0$. Since $f = g \cdot h$, $b_0$ divides $x_0$, and so $\text{ord} (b_0)=1$. So $h(y)$ is Eisenstein, and hence it is an irreducible polynomial in $k((x_0))[y]$. Since $h$ is irreducible of prime-to-$p$ degree $s$, by Lemma \ref{lem3.3}, $\text{Gal}(h ,k((x_0))) =\mu_{s}$. Also $\text{Gal}(g,k((x_0)))\leq \text{Gal}(f, k((x_0))) \leq \text{Gal}(f, k(x_0)) = A_d$. Moreover, $p$ divides the order of the inertia group $I$ and $g$ has degree $p$ and so $\text{Gal}(g, k((x_0)))$ is a transitive subgroup of $S_p$. Hence $\text{Gal}(g, k((x_0))) \leq A_p$. So the Sylow $p$-subgroup $P$ of $\text{Gal}(g, k((x_0)))$ is generated by a $p$-cycle $\tau$. Since $I \subset N_{A_d}(\langle \tau \rangle)=N_{S_d}(\langle \tau \rangle) \cap A_d$, by Proposition \ref{prop2.1}, the inertia group $I$ is of the form $\langle \tau \rangle \rtimes \langle \beta \rangle$, with $\beta = \theta^i \cdot \omega$ for an element $\theta$ of order $(p-1)$, an integer $i$ and $\omega \in H = \text{Sym}(\{p+1 , \cdots ,d\})$. Since $x$ is a rational function of $y$, $Y \cong \mathbb{P}^1$. Note that there are exactly two points in the fibre $\psi^{-1}(\infty)$ with ramification indices $p$ and $s$. So by Lemma \ref{lem2.2}, $\omega$ is an $s$-cycle. Let $h$ be the conductor and let $\beta$ has order $m$. Then by Equation \ref{eq1}, the upper jump $\frac{h}{m}=\frac{d}{p-1}$. Now Equation \ref{eq2} implies $\text{ord}(\theta^i) = m'' = \frac{p-1}{(p-1,d)} = \frac{p-1}{(p-1,s+1)}$. So we have $m = \text{l.c.m.}(\text{ord}(\theta^i),\text{ord}(\omega)) = \text{l.c.m.}(\frac{p-1}{(p-1,s+1)},s)$.
\end{proof}

Observe that the above Proposition \ref{prop_3.3B} was proved in \cite[Theorem 4.9]{8} using a different method when $s<p$.

\begin{corollary}\label{thm3.3B}
Let $d=p+s$, $p \nmid s$. Then for any $p$-cycle $\tau$, the pair $(A_d,\langle \tau\rangle)$ is realizable.
\end{corollary}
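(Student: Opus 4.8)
The plan is to derive the corollary from Proposition \ref{prop_3.3B} by stripping away the tame part of the inertia group via a tame base change, using Abhyankar's Lemma (\cite[XIII, Proposition 5.2]{10}). Since realizability of $(A_d,\langle\tau\rangle)$ is unchanged if $\tau$ is replaced by an $S_d$-conjugate (relabelling induces an isomorphic cover), it suffices to realize $(A_d,\langle\tau\rangle)$ for the particular $p$-cycle $\tau$ produced by Proposition \ref{prop_3.3B}.

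First suppose $d=p+s\geq p+2$ and $(p,s)\neq(7,2)$, so Proposition \ref{prop_3.3B} furnishes an $A_d$-Galois cover $\phi\colon\widetilde{Y}\to\mathbb{P}^1_x$ branched only at $\infty$ whose inertia group above $\infty$ is $I=\langle\tau\rangle\rtimes\langle\beta\rangle$, with $\tau$ a $p$-cycle and $\beta$ of prime-to-$p$ order $m$. I would then base change along the degree-$m$ map $\mathbb{P}^1_w\to\mathbb{P}^1_x$, $x=w^m$, which is tame and totally ramified precisely over $x=0$ and $x=\infty$, and let $\widetilde{Y}'$ be the normalization of $\widetilde{Y}\times_{\mathbb{P}^1_x}\mathbb{P}^1_w$. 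Three points then need checking. (i) $\widetilde{Y}'$ is irreducible: as $k(w)/k(x)$ is cyclic while $A_d$ is simple non-abelian for $d\geq5$ (automatic here since $d\geq p+2\geq5$), the only common subextension of $k(\widetilde{Y})$ and $k(w)$ over $k(x)$ is $k(x)$ itself, so these fields are linearly disjoint and $\widetilde{Y}'\to\mathbb{P}^1_w$ is a connected $A_d$-Galois cover. (ii) It is branched only over $w=\infty$: over $x=0$ the cover $\phi$ is \'{e}tale, and base changing an \'{e}tale cover by a tame one leaves it \'{e}tale, so $w=0$ is not a branch point, while everything away from $0,\infty$ is \'{e}tale as well. (iii) The inertia over $w=\infty$ is exactly $\langle\tau\rangle$: writing $L/k((x^{-1}))$ for the local $I$-extension at $\infty$, its maximal tame subextension is the totally ramified degree-$m$ extension $k((x^{-1/m}))=k((w^{-1}))$, which is precisely the fixed field $L^{\langle\tau\rangle}$; hence after base change the local extension becomes $L/L^{\langle\tau\rangle}$, which is $\langle\tau\rangle$-Galois. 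This produces the desired $A_d$-cover of $\mathbb{P}^1_w$ \'{e}tale away from $\infty$ with inertia $\langle\tau\rangle$.

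It remains to treat the cases excluded above, namely $s=1$ and $(p,s)=(7,2)$. In both, $d<2p$, so $v_p(|A_d|)=1$ and a single $p$-cycle already generates a Sylow $p$-subgroup of $A_d$, whose conjugates generate $A_d$. For Sylow $p$-subgroups the wild part of the inertia conjecture is known by Harbater's theorem (\cite[Theorem 2]{2}), so $(A_d,\langle\tau\rangle)$ is realizable in these cases too. Combining this with the previous paragraph and the $S_d$-conjugacy of $p$-cycles yields the claim for every $p$-cycle $\tau$.

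The step I expect to require the most care is (iii): one must confirm that the degree-$m$ base change absorbs exactly the tame quotient $\mu_m$ of the inertia while leaving the coprime wild part $\langle\tau\rangle$ intact, so that the reduced inertia is neither enlarged nor collapsed. The connectedness check in (i) is the other delicate point, but it reduces cleanly to the simplicity of $A_d$.
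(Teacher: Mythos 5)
Your proposal is correct and takes essentially the paper's own route: Proposition \ref{prop_3.3B} supplies the $A_d$-cover with inertia $\langle\tau\rangle\rtimes\langle\beta\rangle$, and the tame part is then removed by Abhyankar's Lemma, which is precisely your explicit degree-$m$ Kummer base change together with checks (i)--(iii). Your separate handling of the excluded cases $s=1$ and $(p,s)=(7,2)$ via Harbater's Sylow result matches how the paper disposes of all $p\leq d<2p$ cases at the start of \cref{sec_alt}, and your conjugacy remark is the (implicit) standard reduction to a single $p$-cycle.
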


\begin{proof}
 By Proposition \ref{prop_3.3B}, the $A_d$-Galois cover of $\mathbb{P}^1$ obtained from Equation \eqref{eq5} is branched only at $\infty$ with inertia group $I$ whose Sylow $p$-subgroup is generated by a $p$-cycle. Now the result follows by applying Abhyankar's Lemma to this cover.
\end{proof}

\begin{corollary}\label{thm3.3A}
Let $d = ap$ for some integer $a \geq 1$. Then for any $p$-cycle $\tau$, the pair $(A_d,\langle \tau\rangle)$ is realizable.
\end{corollary}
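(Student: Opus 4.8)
The plan is to dispose of the case $a=1$ first (there $d=p$ lies in the immediate range $p\le d<2p$, where $(A_p,\langle\tau\rangle)$ is already realizable), and to treat $a\ge 2$ by a patching argument. The point is that Abhyankar's equation \eqref{eq5} is \emph{not} directly available: writing $d=ap=p+s$ forces $s=(a-1)p$, which is divisible by $p$, so Proposition \ref{prop_3.3B} and Corollary \ref{thm3.3B} cannot be applied to $A_{ap}$ itself. Instead I would realize $A_{ap}$ as generated by alternating subgroups whose degrees are prime to $p$, each containing a fixed $p$-cycle, and then invoke Raynaud's patching theorem \ref{thm_Raynaud}.

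Concretely, set $\tau=(1,\cdots,p)$, $Q=\langle\tau\rangle$, and regard $A_{ap}=\Alt(\{1,\cdots,ap\})$. I would choose $S_1=\{1,\cdots,ap-1\}$ and $S_2=\{1,\cdots,p\}\cup\{ap-1,ap\}$, and put $G_1=\Alt(S_1)\cong A_{ap-1}$, $G_2=\Alt(S_2)\cong A_{p+2}$. Each $G_i$ contains $\tau$ since $\{1,\cdots,p\}\subseteq S_i$, and both degrees are prime to $p$ (note $ap-1\equiv-1$ and $p+2\equiv 2 \pmod p$). Writing $ap-1=p+\big((a-1)p-1\big)$ and $p+2=p+2$, and using $(a-1)p-1\ge 2$ for $a\ge 2$, $p\ge 3$, Corollary \ref{thm3.3B} provides, for each $i$, a connected $G_i$-Galois cover étale away from $\infty$ with inertia $\langle\tau\rangle=Q$ above $\infty$. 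For the single exceptional value $p=7$ (where $(p,s)=(7,2)$ is excluded) I would enlarge $S_2$ by one further point, taking $S_2=\{1,\cdots,p\}\cup\{ap-2,ap-1,ap\}$ so that $G_2\cong A_{p+3}$, which keeps $S_2\setminus S_1=\{ap\}$ and replaces $s=2$ by $s=3$.

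Next I would verify the generation $\langle G_1,G_2\rangle=A_{ap}$. Since $G_1$ acts as the full alternating group on $S_1=\{1,\cdots,ap-1\}$, while $S_2\setminus S_1=\{ap\}$ is a single point and $|S_1\cap S_2|\ge p\ge 3$, the group $G_2$ contributes a $3$-cycle of the form $(a,b,ap)$ with $a,b\in S_1$; together with all $3$-cycles of $G_1$ these yield every $3$-cycle of $\{1,\cdots,ap\}$, hence all of $A_{ap}$. Finally I would apply Theorem \ref{thm_Raynaud} with $G=A_{ap}$, $J=\{1,2\}$, the subgroups $G_1,G_2$ above, and $Q_1=Q_2=Q\subseteq G_i\cap Q$; since $G=\langle G_1,G_2,Q\rangle=\langle G_1,G_2\rangle=A_{ap}$, the theorem yields a connected $A_{ap}$-Galois étale cover of the affine line with inertia $Q=\langle\tau\rangle$ above $\infty$, i.e. $(A_{ap},\langle\tau\rangle)$ is realizable.

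The genuine content is not any single step but the bookkeeping of supports so that every piece simultaneously has degree prime to $p$, lands in the range where Corollary \ref{thm3.3B} applies (degree at least $p+2$, and $(p,s)\neq(7,2)$), contains the chosen $p$-cycle $\tau$, and generates the whole alternating group; once those constraints are met the patching is immediate. The only points requiring separate, easy attention are the small exceptional cases $a=1$ and $p=7$.
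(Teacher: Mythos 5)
Your proof is correct, but it takes a genuinely different route from the paper's. The paper also disposes of $a=1$ first and then argues geometrically for $a\ge 2$: it applies Proposition \ref{prop_3.3B} to the degree-$(d+1)$ cover $x=(1+y^{d+1})/y^{d-p+1}$ (legitimate since there $s=(a-1)p+1$ is prime to $p$), realizes $A_d$ as a point stabilizer inside $A_{d+1}$ so that $\epsilon\colon \widetilde{Y}\to Y\cong\PP^1$ is an $A_d$-Galois cover branched at exactly two points (Lemma \ref{lem2.2}), and then pulls $\epsilon$ back along the $m$-cyclic Kummer cover $z\mapsto z^m=y$; the refined Abhyankar's lemma (\cite[Lemma 4.1]{8}) kills the tame branch point and cuts the inertia at $\infty$ down to $\langle\tau\rangle$. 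You instead decompose $A_{ap}=\langle G_1,G_2\rangle$ with $G_1\cong A_{ap-1}$ and $G_2\cong A_{p+2}$ (or $A_{p+3}$ when $p=7$), both of degree prime to $p$ and containing the fixed $p$-cycle $\tau$, feed Corollary \ref{thm3.3B} into Raynaud's patching theorem (Theorem \ref{thm_Raynaud}), and conclude. Your bookkeeping checks out: $s_1=(a-1)p-1\ge p-1\ge 2$ is prime to $p$ and never equals $2$ when $p=7$; $s_2\in\{2,3\}$ avoids the excluded pair $(p,s)=(7,2)$; $\tau\in G_1\cap G_2$; and $\langle G_1,G_2\rangle$ is transitive on $\{1,\dots,ap\}$ with point stabilizer containing $\Alt(S_1)$, hence has order at least $ap\cdot|A_{ap-1}|=|A_{ap}|$ and so equals $A_{ap}$. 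In fact your mechanism is exactly the one the paper itself uses in Proposition \ref{prop_reduction_to_one_mod_p}, just deployed for a different reduction. The trade-off: the paper's explicit construction retains the ramification data --- Remark \ref{rmk_a_d_ conductor} computes the conductor of the resulting cover and compares it with the minimal-jump cover of Section \ref{sec_neweq} --- whereas Raynaud's theorem is a pure existence statement, so your cover comes with no control on its conductor or upper jump; in exchange, your argument is purely group-theoretic and avoids analyzing the intermediate two-point-branched cover.
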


\begin{proof}
We may assume that $a\geq 2$. Consider the $A_{d+1}$-Galois cover $\phi \colon \widetilde{Y} \rightarrow  \mathbb{P}^1$ branched only at $\infty$, which is the Galois closure of the degree-$d$ cover $\psi \colon Y \rightarrow \mathbb{P}^1$ given by the affine Equation
\begin{equation}
x=\frac{1+y^{d+1}}{y^{d-p+1}}.
\end{equation}
By Proposition \ref{prop_3.3B}, the inertia group $I$ above $\infty$ is given by $\langle \tau \rangle \rtimes \langle \beta \rangle \cong \mathbb{Z}/p \rtimes \mu_m$ for a $p$-cycle $\tau$ and for the prime-to-$p$ integer $m = \text{l.c.m.}(\frac{p-1}{(p-1,d+1)},d-p+1)$. Consider the $A_d$-Galois cover $\epsilon \colon \widetilde{Y} \rightarrow Y \cong \mathbb{P}^1$. By Lemma \ref{lem2.2}, this cover is branched at two points $(y=0)$ and $(y=\infty)$ with respective inertia groups of order $p\frac{m}{d-p+1}$ and $m$ above them. Let $\eta_m \colon \PP^1 \to \mathbb{P}^1$ be the $m$-cyclic Kummer cover by sending $z^m$ to $y$. Since $A_d$ is a quasi $p$-group, $\epsilon$ is an $A_d$-cover and $\eta_m$ is prime-to-$p$ Galois cover, they are linearly disjoint. By the refined Abhyankar's lemma (\cite[Lemma 4.1]{8}), the normalized pullback $Z \rightarrow \mathbb{P}^1$ of $\epsilon$ along $\eta_m$ is an $A_d$-Galois cover branched only at $\infty$, and the inertia group above $\infty$ is generated by the $p$-cycle $\tau$.
\end{proof}

\begin{remark}\label{rmk_a_d_ conductor}
From Proposition \ref{prop_3.3B}, it follows that the conductor $h$ for the cover $\phi$ is given by $h=\frac{d+1}{p-1} \times \text{l.c.m.}(\frac{p-1}{(p-1,d+1)},d-p+1)$. By \cite[IV, Proposition 2]{Serre_loc}, it follows that the cover $\epsilon$ also has conductor $h$. Applying \cite[Lemma 4.1]{20} shows that the $A_d$-Galois cover $Z \rightarrow \mathbb{P}^1$ also has the same conductor $h$. In Section \ref{sec_neweq}, we introduce a different cover which has the minimal possible upper jump $\frac{d+1}{p-1}$ and offers an alternate proof (cf. Proposition \ref{prop3.1}) for Corollary \ref{thm3.3A} when $a \geq 3$.
\end{remark}

The following result follows immediately from Corollary \ref{thm3.3B} and Corollary \ref{thm3.3A} together with Harbater's result \cite[Theorem 2]{2}.

\begin{corollary}\label{cor_A_d_containing_cycle}
Let $P$ be a $p$-subgroup of $A_d$ containing a $p$-cycle. Then the pair $(A_d,P)$ is realizable.
\end{corollary}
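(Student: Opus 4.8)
The plan is to deduce the statement directly, combining the realizability of the cyclic case with Harbater's mechanism for enlarging the wild part of an inertia group; there is no geometric construction left to do, only a bookkeeping check of hypotheses.

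First I would record that Corollaries \ref{thm3.3B} and \ref{thm3.3A} together realize $(A_d,\langle\tau\rangle)$ for \emph{every} $d\ge p$ and every $p$-cycle $\tau$. Writing $d=p+s$ with $s=d-p\ge 0$: if $p\nmid s$ then Corollary \ref{thm3.3B} applies, while if $p\mid s$ then $p\mid d$ and Corollary \ref{thm3.3A} applies (the boundary range $p\le d<2p$ being immediate as noted at the start of the section). Thus in all cases there is an $A_d$-Galois cover of $\mathbb{P}^1$ \'{e}tale away from $\infty$ whose inertia group above $\infty$ has wild part exactly $\langle\tau\rangle$.

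Next, given a $p$-subgroup $P\le A_d$ containing a $p$-cycle $\tau$, I would verify the generation hypothesis needed for the enlargement, namely that the conjugates of $P$ generate $A_d$. Since $\langle\tau\rangle\subseteq P$, it suffices to see that the normal closure of $\langle\tau\rangle$ is all of $A_d$; this is immediate from the simplicity of $A_d$ for $d\ge 5$ (the normal closure being a nontrivial normal subgroup), and a direct check disposes of the few smaller values of $d$. Finally I would invoke Harbater's result (\cite[Theorem 2]{2}), which permits the wild part of a realizable inertia group to be increased: applied to the realizable pair $(A_d,\langle\tau\rangle)$ and the $p$-group $P\supseteq\langle\tau\rangle$ whose conjugates generate $A_d$, it yields an $A_d$-Galois cover of $\mathbb{P}^1$ branched only at $\infty$ with inertia group $P$ above $\infty$, so that $(A_d,P)$ is realizable.

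I expect the only genuine content to be confirming the hypotheses of \cite[Theorem 2]{2} in the form used here, in particular that the $p$-part of the already-realized inertia sits inside the target group $P$ and that the conjugates of $P$ generate $A_d$. Once these are in place the conclusion is a formal consequence, so there is no analytic or patching obstacle at this step beyond correctly citing Harbater's enlargement theorem.
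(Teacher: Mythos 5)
Your proof is correct and takes essentially the same route as the paper, whose entire argument is that the corollary ``follows immediately from Corollary \ref{thm3.3B} and Corollary \ref{thm3.3A} together with Harbater's result \cite[Theorem 2]{2}.'' Your case split ($p \nmid d-p$ versus $p \mid d$) and the check that the conjugates of $P$ generate $A_d$ are precisely the bookkeeping the paper leaves implicit, the latter being automatic anyway since the conjugates of $\langle\tau\rangle\subseteq P$ already generate $A_d$ by the realizability of $(A_d,\langle\tau\rangle)$.
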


\begin{remark}\label{rmk_general_IC_for_A_d}
Note that to prove the wild part of the inertia conjecture for $A_d$ one needs to show that $(A_d, \langle \tau \rangle)$ is realizable where $\tau$ is a product of $r$-many disjoint $p$-cycles for all $r$ with $1 \leq r \leq \floor*{\frac{d}{p}}$.
\end{remark}

The following result shows that to prove the wild part of the inertia conjecture for $A_d$, $d \geq p$, it is enough to prove the conjecture for the cases $d \equiv 0 \text{ or } 1 (\text{ mod } p)$.

\begin{proposition}\label{prop_reduction_to_one_mod_p}
Let $r \geq 2$ be an integer. Assume that the pair $(A_{rp+1}, \langle \tau \rangle)$ is realizable where $\tau$ is the product of $r$ disjoint $p$-cycles in $A_{rp+1}$. Then for any $d\geq rp+1$, the pair $(A_d,\langle \tau \rangle)$ is realizable.
\end{proposition}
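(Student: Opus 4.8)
The plan is to build the desired $A_d$-cover by patching together copies of the hypothesized $A_{rp+1}$-cover via Raynaud's Theorem \ref{thm_Raynaud}. First I would fix the labeling so that $\tau=\tau_1\cdots\tau_r$ is a product of $r$ disjoint $p$-cycles with $\Supp(\tau)=\{1,\dots,rp\}\subseteq\{1,\dots,d\}$, and set $Q=\langle\tau\rangle$. For each index $j$ with $rp+1\le j\le d$, let $G_j\coloneqq\Alt(\{1,\dots,rp\}\cup\{j\})$, a copy of $A_{rp+1}$ sitting inside $A_d$. Since $\Supp(\tau)\subseteq\{1,\dots,rp,j\}$ and $\tau$ is even, we have $Q\subseteq G_j$, hence $G_j\cap Q=Q$. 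Viewed inside $G_j$, the element $\tau$ is a product of $r$ disjoint $p$-cycles fixing exactly the one point $j$, which is precisely the configuration of the hypothesis; after relabeling the $(rp+1)$-element set, the isomorphism $G_j\cong A_{rp+1}$ carries $\langle\tau\rangle$ to the subgroup generated by a product of $r$ disjoint $p$-cycles. Thus $(G_j,Q)$ is realizable for every such $j$, i.e. there is a connected $G_j$-Galois \'etale cover of $\mathbb{A}^1$ with inertia group $Q$ above $\infty$.

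The substantive step is to verify the generation hypothesis $\langle\{G_j\}_{rp+1\le j\le d},Q\rangle=A_d$ required by Theorem \ref{thm_Raynaud}. Here I would invoke the classical fact that the $3$-cycles $(1,2,k)$ with $3\le k\le d$ generate $A_d$. Since $r\ge 2$ and $p$ is odd we have $rp\ge 6$, so each $3$-cycle $(1,2,k)$ with $3\le k\le rp$ already lies in every $G_j$, while each $(1,2,j)$ with $rp+1\le j\le d$ lies in $G_j$. Hence the subgroup they generate contains all the $3$-cycles $(1,2,k)$ for $3\le k\le d$ and therefore equals $A_d$; as each $G_j\subseteq A_d$, the generated subgroup is exactly $A_d$.

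With $G=A_d$, the finite family $\{G_j\}_{rp+1\le j\le d}$, and $Q_j\coloneqq Q\subseteq G_j\cap Q$, all hypotheses of Raynaud's Theorem \ref{thm_Raynaud} are met, and it produces a connected $A_d$-Galois \'etale cover of the affine line with inertia group $Q=\langle\tau\rangle$ above $\infty$, proving that $(A_d,\langle\tau\rangle)$ is realizable. (When $d=rp+1$ the claim reduces to the hypothesis itself, with the single index $j=rp+1$ giving $G_{rp+1}=A_d$.)

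I do not expect a genuine obstacle, only bookkeeping: one must ensure the isomorphism $G_j\cong A_{rp+1}$ sends $\langle\tau\rangle$ to a subgroup generated by a product of $r$ disjoint $p$-cycles (so the hypothesis literally applies), and that Theorem \ref{thm_Raynaud} is invoked with a \emph{single} fixed target subgroup $Q$ common to all the $G_j$. Both are automatic because $Q\subseteq G_j$ for every $j$, so no conjugation is needed to glue the local covers together; the only mild care is in checking the generation count, which is where the bound $rp\ge 6$ (hence $r\ge 2$) is used.
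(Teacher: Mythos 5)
Your proposal is correct and follows essentially the same route as the paper: the paper's proof likewise takes $G_i=\Alt(\Supp(\tau)\cup\{i\})$ with $Q=Q_i=\langle\tau\rangle$ for $i\in\{1,\dots,d\}\setminus\Supp(\tau)$ and invokes Theorem \ref{thm_Raynaud}. The only difference is that the paper calls the conclusion immediate, whereas you spell out the generation of $A_d$ via the $3$-cycles $(1,2,k)$ — a detail the paper leaves to the reader.
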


\begin{proof}
This is an immediate consequence of Raynaud's result (Theorem \ref{thm_Raynaud}) by taking $G_i=\Alt(\Supp(\tau) \cup \{i\})$ and $Q = Q_i=\langle \tau \rangle$ for $i \in \{1,\cdots,d\} \setminus \Supp(\tau)$.
\end{proof}

\section{Wild Part of the Inertia Conjecture for product of Alternating groups}\label{sec_altp}
The first two results of this section use formal patching results from section \cref{sec_patching} to construct $G_1\times G_2$ covers of $\PP^1$ from the given $G_1$ and $G_2$ covers such that the inertia group over $\infty$ is smaller than the one obtained from the fiber product of the two covers. This will be used to construct a product of Alternating group covers with a certain cyclic $p$-group as the inertia group. In view of Lemma \ref{lem_minimal_inertia_product} and \cite[Theorem 2]{2}, this is exactly what we need.

\begin{lemma}\label{lem_prep}
Let $G_1$, $G_2$ be two quasi $p$-groups, $P_1$ and $P_2$ be $p$-subgroups of $G_1$ and $G_2$ respectively. Assume that the pairs $(G_1,P_1)$ and $(G_2,P_2)$ are realizable, and let $Q_1$ and $Q_2$ be index-$p$ subgroups of $P_1$ and $P_2$ respectively. Assume that the local $P_1/Q_1$ and $P_2/Q_2$ Galois extensions are given by the Artin-Schreier polynomials $f_0=Z_1^p - Z_1 -f(x_0)$ and $g_0= Z_2^p-Z_2 -g(x_0)$ respectively, where $x_0$ is the local parameter of $\PP^1$ at $\infty$. Assume that $\ord_{x_0}(g)$ and $\ord_{x_0}(f)$ are different and not multiples of $p$. For $\alpha \in k$, let $M_{\alpha}/K$ and $N_{\alpha}/K$ be the $\mathbb{Z}/p$-Galois extensions given by the polynomials $f_{\alpha} = Z^p-Z- (1-\alpha)f(x_0) - \alpha g(x_0) - \alpha x_0^{-1} \in k((x_0))[Z]$ and $g_{\alpha}= Z^p -Z - (1+\alpha) g(x_0) + \alpha f(x_0) \in k((x_0))[Z]$, respectively.
Then there is a dense open subset $\mathcal{V}$ of $\mathbb{A}^1_t$ such that for all closed points $(t=\alpha)$ in $\mathcal{V}$, there exist a $P_1$-Galois extension $\widetilde M_{\alpha}/K$ and a $P_2$-Galois extension $\widetilde N_{\alpha}/K$ such that $M_{\alpha}=\widetilde M_{\alpha}^{Q_1}$, $N_{\alpha}=\widetilde M_{\alpha}^{Q_2}$, $\widetilde M_{\alpha}/K$ is realized by the pair $(G_1,P_1)$, and $\widetilde N_{\alpha}/K$ is realized by the pair $(G_2,P_2)$.
\end{lemma}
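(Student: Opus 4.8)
The plan is to realize $M_\alpha$ and $N_\alpha$ as the required index-$p$ quotients by deforming the local extensions of the two given realizing covers and then running the formal patching machinery of Lemmas~\ref{lem_formal_patching} and~\ref{lem_dense_realization}. The two halves are symmetric, so I describe the construction for $M_\alpha$ and the pair $(G_1,P_1)$; the argument producing $\widetilde N_\alpha$ from $(G_2,P_2)$ with $\widetilde N_\alpha^{Q_2}=N_\alpha$ is identical after replacing the family below by $g_t\coloneqq(1+t)g+tf$. Since $(G_1,P_1)$ is realizable, I would first fix a $G_1$-Galois cover of $\PP^1$ branched only at $\infty$ with inertia $P_1$, and let $L_1/K$ be its local $P_1$-extension at $\infty$; by hypothesis the $Q_1$-fixed field of $L_1$ is the Artin--Schreier extension cut out by $f_0=Z^p-Z-f(x_0)$, which is exactly the $\alpha=0$ specialization of $f_\alpha$.

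Next I would build a one-parameter local deformation. Set $f_t\coloneqq(1-t)f+tg+tx_0^{-1}\in k[t]((x_0))$, so that $Z^p-Z-f_t$ specializes to $f_0$ at $t=0$ and to $f_\alpha$ at $t=\alpha$. The goal is an integral $P_1$-Galois cover $\widetilde S_1\to U_{\PP^1,\infty,k[t]}$, totally ramified over $b_{\infty,k[t]}$, whose fiber over $t=0$ recovers $L_1/K$ and whose $Q_1$-quotient at parameter $t$ is the Artin--Schreier extension of $f_t$. The bottom layer is the explicit Artin--Schreier family $f_t$; to lift it to all of $P_1$ I would transport the remaining structure of $L_1/K$ along the deformation, using the flexibility of wildly ramified extensions of $K=k((x_0))$: by Artin--Schreier--Witt theory when $P_1$ is cyclic (holding the higher Witt components of $L_1$ fixed and only moving the bottom one), and in general by solving the embedding problem $P_1\twoheadrightarrow P_1/Q_1=\mathbb{Z}/p$ over $K$ in a family extending the given solution at $t=0$, which is possible since $K/\wp(K)$ is infinite dimensional and $\mathrm{cd}_p(K)=1$, so that the maximal pro-$p$ quotient of the absolute Galois group of $K$ is free.

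With $\widetilde S_1$ in hand I would set $Z_1\coloneqq\widetilde S_1\times_{U_{\PP^1,\infty,k[t]}}U_{\PP^1,\infty,R}$ and apply Lemma~\ref{lem_formal_patching} with $G=H=G_1$ and $I_1=I=P_1$: its hypothesis is exactly that the closed fiber of $Z_1$ restricts over $\spec(K)$ to $\Ind_{P_1}^{P_1}\spec(L_1)=\spec(L_1)$, which holds because the $t=0$ fiber of $\widetilde S_1$ is $L_1/K$. This produces a normal $G_1$-Galois cover $V_1\to\PP^1_R$, \'etale away from $\infty_R$, with inertia $P_1$ over $\infty_R$ and $V_1\times_{\PP^1_R}U_{\PP^1,\infty,R}\cong\Ind_{P_1}^{G_1}Z_1$. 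Feeding $(\widetilde S_1,V_1)$ into Lemma~\ref{lem_dense_realization} then yields a dense open $\mathcal V_1\subseteq\mathbb{A}^1_t$ such that for every $(t=\alpha)\in\mathcal V_1$ there is a $G_1$-Galois cover of $\PP^1$ branched only at $\infty$, with inertia $P_1$ and local extension $\widetilde M_\alpha\coloneqq M^{(1)}_\alpha/K$; by construction $\widetilde M_\alpha^{Q_1}$ is the Artin--Schreier extension of $f_\alpha$, namely $M_\alpha$, so $\widetilde M_\alpha$ is realized by $(G_1,P_1)$. The symmetric argument gives a dense open $\mathcal V_2$ and the covers $\widetilde N_\alpha$; setting $\mathcal V=\mathcal V_1\cap\mathcal V_2$ finishes the proof.

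The step I expect to be the main obstacle is the construction of $\widetilde S_1$ as an \emph{integral} cover that is \emph{totally ramified} over the whole branch divisor $b_{\infty,k[t]}$ while pinning its $Q_1$-quotient to $f_t$. The delicate feature is that the conductor of the bottom Artin--Schreier layer need not be constant: because $\ord_{x_0}(f)\ne\ord_{x_0}(g)$, exactly one of the families $f_t,g_t$ has strictly larger generic pole order than its value at $t=0$, so the wild ramification jumps off the special fiber. Here I would use that $\ord_{x_0}(f)$ and $\ord_{x_0}(g)$ are coprime to $p$: for all but finitely many $\alpha$ the defining element $(1-\alpha)f+\alpha g+\alpha x_0^{-1}$ of $M_\alpha$ then has pole order prime to $p$ and lies outside $\wp(K)$, which keeps every fiber totally ramified and the total space integral, the finitely many bad parameters being absorbed into the complement of $\mathcal V$.
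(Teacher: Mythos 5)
Your overall skeleton coincides with the paper's: deform the two Artin--Schreier extensions through the families $f_t$, $g_t$, dominate each family by a $P_i$-Galois family, feed the result into Lemma~\ref{lem_formal_patching} (with $H=G_i$, $I_1=I=P_i$) and then into Lemma~\ref{lem_dense_realization}, and take a dense open set of good parameters. The genuine gap is exactly at the step you flag as the main obstacle: the construction of $\widetilde S_1$. Your justification for general $P_1$ --- that the maximal pro-$p$ quotient of the absolute Galois group of $K=k((x_0))$ is free --- solves the wrong embedding problem. Freeness of that pro-$p$ group produces, for each \emph{fixed} $\alpha$, some $P_1$-extension of $K$ dominating $M_\alpha$; it does not produce a family over $k[t]$, and a family is indispensable, since Lemmas~\ref{lem_formal_patching} and~\ref{lem_dense_realization} take covers of $U_{X,x,R}$ and $U_{X,x,k[t]}$ as input. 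Fiberwise data cannot be fed into them, and for a fixed $\alpha$ the mere existence of a $P_1$-extension $\widetilde M_\alpha\supset M_\alpha$ gives no reason that $\widetilde M_\alpha$ is realized by $(G_1,P_1)$ --- that realization is the entire content of the lemma and comes only from the patching. If instead you apply freeness to the function field $k((x_0))(t)$, you do get a $P_1$-extension dominating the generic fiber of the family, but with no control of its branch locus on $\spec(k[[x_0]][t])$: extra branch divisors besides $(x_0=0)$ can appear, total ramification along $b_{\infty,k[t]}$ can fail, and the fiber at $t=0$ need not be $L_1$, so the hypotheses of both patching lemmas are lost. Your closing pole-order remark only keeps the bottom $\Z/p$-layer well behaved; it constrains nothing about the $P_1$-cover above it.

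What the paper uses to fill precisely this hole is a theorem on embedding problems for fundamental groups with prescribed local behavior: the family $S'=\spec\bigl(k((x_0))[t][Z]/(f_t)\bigr)$ is an \'etale $\Z/p$-cover of $\spec(k((x_0))[t])$, i.e.\ of the affine $t$-line over the large field $k((x_0))$ (equivalently, of the complement of the divisor $(x_0=0)$ in $U\times_k\A^1_t$), and Harbater's result \cite[Theorem 3.11]{Harb_extn} yields a \emph{connected} $P_1$-Galois \'etale cover of $\spec(k((x_0))[t])$ dominating it; normalizing $U\times_k\A^1_t$ in its function field then gives a family whose branch locus is automatically contained in $(x_0=0)$ and which has the local properties required by Lemmas~\ref{lem_formal_patching} and~\ref{lem_dense_realization}. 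Some such statement about $\pi_1$ of the $t$-line over $k((x_0))$ --- not about $G_K$ --- is unavoidable here and is not a formal consequence of freeness. Your Artin--Schreier--Witt suggestion for cyclic $P_1$ can be made to work (a Witt-vector family $(f_t,w_2,\dots,w_n)$ with entries in $k((x_0))[t]$ is again \'etale over $\spec(k((x_0))[t])$), and that would suffice for the application in Theorem~\ref{thm_pdt_g}, where $P_1$ and $P_2$ are cyclic; but the lemma as stated allows arbitrary $p$-subgroups, and there your argument has a real hole.
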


\begin{proof}
Let $U=\spec(k[[x_0]])$, $R=k[[t]]$. Let $A \coloneqq k((x_0))[t][Z]/(f_t)$, where $f_t(Z)=Z^p-Z- (1-t) f(x_0) -t g(x_0)- t x_0^{-1} \in k((x_0))[t][Z]$ and $B \coloneqq k((x_0))[t][Z]/(g_t)$, where $g_t(Z)=Z^p-Z - (1+t) g(x_0) + t f(x_0) \in k((x_0))[t][Z]$. We have the maps $\phi_{\A^1_t}' \colon S' \coloneqq \spec(A) \rightarrow \spec(k((x_0))[t])$ and $\psi_{\A^1_t}'\colon T' \coloneqq \spec(B) \rightarrow \spec(k((x_0))[t])$. Let $\phi_{\mathbb{A}^1_t} \colon S \rightarrow U\times_k \A^1_t$ and $\psi_{\mathbb{A}^1_t} \colon T \rightarrow U\times_k \A^1_t$ be the normalization maps in $A$ and $B$ respectively. Let $\phi_R \colon S_R \rightarrow U_R$ and $\psi_R \colon T_R\to U_R$ be their pullbacks under the map $U_R \rightarrow U \times_k \mathbb{A}^1_t$. Then the normalization of the closed fibre of $\phi_R$ and $\psi_R$ correspond to the field extensions $M_0/k((x_0))$ and $N_0/k((x_0))$ respectively. The covers $\phi_R$ and $\psi_R$ are branched only at the $R$-valued point $x_0=0$ since it is the only pole of the functions $f_t$ and $g_t$ in $R[[x_0]]$. By \cite[Theorem 3.11]{Harb_extn}, there exist connected $P_1$ and $P_2$-Galois \'{e}tale covers $\Phi_{\A^1_t}' \colon \widetilde{S}' \rightarrow \spec(k((x_0))[t])$ and $\Psi_{\A^1_t}' \colon \widetilde{T}' \rightarrow \spec(k((x_0))[t])$ dominating $\phi_{\A^1_t}'$ and $\psi_{\A^1_t}'$ respectively. Taking normalization of $U \times_k \A^1_t$ in the function fields of $\widetilde{S}'$ and $\widetilde{T}'$, we obtain $P_1$ and $P_2$-Galois covers $\Phi_{\mathbb{A}^1_t} \colon \widetilde S \rightarrow U\times_k \A^1_t$ and $\Psi_{\mathbb{A}^1_t} \colon \widetilde T \rightarrow U\times_k \A^1_t$ dominating $\phi_{\mathbb{A}^1_t}$ and $\psi_{\mathbb{A}^1_t}$ respectively. One also obtains $\Phi_R$ and $\Psi_R$ which dominate $\phi_R$ and $\psi_R$ by pull backs. 

So by Lemma \ref{lem_formal_patching}, there are $G_1$ and $G_2$-Galois covers of $\mathbb{P}^1_R$ satisfying the hypothesis of Lemma \ref{lem_dense_realization}. So there is a dense open subset $\mathcal{V}$ of $\mathbb{A}^1_t$ such that for all points $(t=\alpha)$ in $\mathcal{V}$, the extension $\widetilde M_{\alpha}/K$ is realized by the pair $(G_1,P_1)$, and the extension $\widetilde{N}_{\alpha}/K$ is realized by the pair $(G_2,P_2)$.
\end{proof}

\begin{theorem}\label{thm_pdt_g}
Let $G_1$, $G_2$ be two perfect quasi $p$-groups. Let $\tau\in G_1$ and $\sigma\in G_2$ be of order $p$ and $p^r$ for some $r$ respectively. Let $P_1 = \langle \tau \rangle \leq G_1$, $P_2 = \langle \sigma \rangle \leq G_2$. Assume that the pairs $(G_1,P_1)$ and $(G_2,P_2)$ are realizable. Then there exists $1 \leq a \leq p-1$ such that for $I \coloneqq \langle (\tau^a ,\sigma) \rangle \leq G_1 \times G_2$, the pair $(G_1 \times G_2, I)$ is also realizable.
\end{theorem}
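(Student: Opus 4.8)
The plan is to realize $I$ as an inertia group in two stages: first build a $G_1\times G_2$-cover of $\PP^1$, \'etale away from $\infty$, whose inertia over $\infty$ is the full product $P_1\times P_2$ but carries a carefully controlled ramification filtration, and then invoke the inertia-reduction result \cite[Theorem 3.7]{16} to cut the wild inertia down to the cyclic subgroup $I$. The group theory dictating this is as follows. Let $Q_1\coloneqq\{1\}\leq P_1$ and $Q_2\coloneqq\langle\sigma^p\rangle\leq P_2$ be the (unique) index-$p$ subgroups; then each of $P_1/Q_1$, $P_2/Q_2$ is cyclic of order $p$ and $(P_1\times P_2)/(Q_1\times Q_2)\cong(\ZZ/p)^2$. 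A direct check shows that $I=\langle(\tau^a,\sigma)\rangle$ is exactly the preimage in $P_1\times P_2$ of the diagonal line $\ell_a\coloneqq\langle(\bar\tau^a,\bar\sigma)\rangle\subset(\ZZ/p)^2$, that this preimage is cyclic of order $p^r$, and that it projects onto $P_1$ in $G_1$ and onto $P_2$ in $G_2$. Thus reducing the inertia from $P_1\times P_2$ to $I$ is the same as reducing the $(\ZZ/p)^2$-quotient of the inertia to the line $\ell_a$, which is precisely the operation licensed by \cite[Theorem 3.7]{16}, provided the ramification filtration is put in the correct shape.

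To engineer that shape I would use Lemma \ref{lem_prep}. Writing the top $\ZZ/p$-quotients of the local extensions realized by $(G_1,P_1)$ and $(G_2,P_2)$ as Artin--Schreier extensions $Z^p-Z=f$ and $Z^p-Z=g$, I first arrange -- if necessary by replacing one realization so that the two conductors differ, as the hypotheses of Lemma \ref{lem_prep} demand -- that $\ord_{x_0}(f)$ and $\ord_{x_0}(g)$ are distinct and prime to $p$. For a generic $\alpha$ in the dense open set supplied by Lemma \ref{lem_prep}, and in particular with $\alpha\notin\FF_p$, that lemma then produces $P_1$- and $P_2$-extensions, realized by $(G_1,P_1)$ and $(G_2,P_2)$, whose top $\ZZ/p$-quotients are $f_\alpha$ and $g_\alpha$. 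The decisive calculation is that every nontrivial $\FF_p$-combination $c_1f_\alpha+c_2g_\alpha$ has the same pole order $\max(\ord f,\ord g)$: its leading-pole coefficient is $\bigl((1-\alpha)c_1-\alpha c_2\bigr)$ (or the analogous form if $g$ dominates) times the leading coefficient of the dominant function, and for $\alpha\notin\FF_p$ this scalar is nonzero for every $(c_1,c_2)\neq(0,0)$. Hence the compositum defines a $(\ZZ/p)^2$-extension all of whose $\ZZ/p$-subextensions share a single upper jump -- the ``desirable filtration'' referred to in the introduction.

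Next I would assemble the global cover. The normalized fibre product over $\PP^1$ of the two $G_i$-covers produced above is a $G_1\times G_2$-cover, \'etale away from $\infty$, with inertia $P_1\times P_2$ over $\infty$ and with local extension the compositum just analyzed; that the fibre product is connected, i.e. carries the full group $G_1\times G_2$ rather than a proper subgroup coming from a shared subcover, holds after possibly shrinking $\mathcal{V}$, since a common subcover would be a coincidence between two independently varying families. Applying \cite[Theorem 3.7]{16} to this cover, the equality of all upper jumps on the $(\ZZ/p)^2$-quotient permits reducing the wild inertia to the preimage of a single line; by the leading-coefficient computation this line is transverse to both coordinate axes, hence equal to $\ell_a$ for some $1\leq a\leq p-1$, and the lower layer $Q_1\times Q_2$ is preserved. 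The resulting $G_1\times G_2$-cover is therefore \'etale away from $\infty$ with inertia the cyclic group $I=\langle(\tau^a,\sigma)\rangle$, proving the claim.

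I expect the main obstacle to be the final step: verifying that the filtration we have manufactured meets the exact hypotheses of \cite[Theorem 3.7]{16} and that its conclusion delivers a diagonal line (so that $a\neq 0$ and $I$ surjects onto both $P_1$ and $P_2$) rather than a coordinate axis, all while keeping the cover \'etale away from $\infty$ and leaving $Q_1\times Q_2$ intact so that the final inertia is genuinely $\ZZ/p^r$. Two secondary technical points -- arranging $\ord f\neq\ord g$ and establishing connectedness of the fibre product -- I expect to be controllable by genericity, but both must be argued with care rather than merely asserted.
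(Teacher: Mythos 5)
Your overall architecture (Lemma \ref{lem_prep}, a fibre product to get wild inertia $P_1\times P_2$, then \cite[Theorem 3.7]{16}) and your group theory (that $I$ is the preimage of a line $\ell_a$ in the $(\ZZ/p)^2$-quotient, cyclic of order $p^r$, surjecting onto $P_1$ and $P_2$) match the paper. The gap is in the one step you yourself flagged as the main obstacle, and it is fatal as stated: \cite[Theorem 3.7]{16} does not ``permit reducing the wild inertia to the preimage of a single line'' of your choosing. What that theorem says (and how the paper uses it) is that for a perfect group the inertia can be reduced to the \emph{second ramification group $Q_2$ in the lower numbering}, i.e.\ the depth-$\geq 2$ part of the filtration; there is no freedom in the choice of target subgroup. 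Consequently the filtration you engineer has exactly the wrong shape: with the same $\alpha$ on both sides, your own computation shows that every $\ZZ/p$-quotient of $Q=P_1\times P_2$ (all of which factor through $Q/(1\times\langle\sigma^p\rangle)\cong(\ZZ/p)^2$) has the single upper break $h=\max(\ord_{x_0}(f),\ord_{x_0}(g))\geq 2$, so $Q_1=\cdots=Q_h=Q$ and in particular $Q_2=Q$: the theorem hands back the inertia $P_1\times P_2$ you started with, and nothing is reduced. What is needed is not equal jumps but maximally \emph{unequal} ones: the direction to be killed must have break strictly smaller than everything else, so that the complementary diagonal subgroup \emph{is} $Q_2$.

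That is precisely what the paper arranges, by the trick your proposal misses: evaluate the two deformed families of Lemma \ref{lem_prep} at parameters differing by $1$, taking $M_\alpha$ (given by $f_\alpha$) on the $G_1$ side but $\widetilde N_{\alpha-1}$ (given by $g_{\alpha-1}$) on the $G_2$ side, with $\alpha$ and $\alpha-1$ both in $\mathcal V$. Then for roots $a_1$ of $f_\alpha$ and $a_2$ of $g_{\alpha-1}$ one gets $(a_1-a_2)^p-(a_1-a_2)=\alpha x_0^{-1}$; the term $\alpha x_0^{-1}$ in $f_\alpha$, which in your computation is a negligible lower-order term, is there exactly to survive this cancellation. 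So $K(a_1-a_2)/K$ has break $1$, every other $\ZZ/p$-subquotient has break $\max(\ord_{x_0}(f),\ord_{x_0}(g))>1$, and \cite[Proposition 3.1]{15} gives that $M=M_\alpha\widetilde N_{\alpha-1}$ is $(\ZZ/p\times\ZZ/p^r)$-Galois with $Q_1=Q$ and $Q_2=\Gal(M/K(a_1-a_2))\cong\ZZ/p^r$, a subgroup surjecting onto both $P_1$ and $P_2$. Now \cite[Theorem 3.7]{16} applies verbatim and produces inertia $Q_2=\langle(\tau^a,\sigma)\rangle$ for some $1\leq a\leq p-1$. (Your two secondary worries are genuine but resolvable: the hypothesis $\ord_{x_0}(f)\neq\ord_{x_0}(g)$ is arranged via \cite[Theorem 2.2.2]{20} by raising one conductor, and connectedness needs no shrinking of $\mathcal V$: the component of the fibre product through $\eta=(\eta_1,\eta_2)$ has inertia containing $P_1\times P_2$, and since realizability of $(G_i,P_i)$ already forces the conjugates of $P_i$ to generate $G_i$, Goursat's lemma shows its stabilizer is all of $G_1\times G_2$.)
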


\begin{proof}
Let $\phi_i \colon Y_i \rightarrow \mathbb{P}^1$ be a $G_i$-Galois cover of $\mathbb{P}^1$ branched only at $\infty$ with inertia group $P_i$ above $\infty$ and conductor $h_i$, for $i=1$, $2$. As usual it can be arranged that $h_2<h_1$ and the first upper jump of the $P_2$-extension is at least 2 (\cite[Theorem 2.2.2]{20}. Let $N_2$ be the index $p$ subgroup of $P_2$. Let the local $P_1$ and $P_2/N_2$ Galois extensions be given by the Artin-Schreier polynomials $f_0=Z_1^p - Z_1 -f(x_0) \in k((x_0))[Z_1]$ and $g_0= Z_2^p-Z_2 -g(x_0) \in k((x_0))[Z_2]$ respectively where $x_0$ is a local parameter at $\infty$. By Lemma \ref{lem_prep}, there is a dense open subset $\mathcal{V}$ of $\mathbb{A}^1_t$ such that for all points $(t=\alpha)$ in $\mathcal{V}$, the extension $M_{\alpha}/K$ given by the polynomial $f_{\alpha}=Z^p-Z- (1-\alpha) f(x_0) - \alpha g(x_0) - \alpha \cdot x_0^{-1}$ is realized by the pair $(G_1,P_1)$, and the extension $N_{\alpha}/K$ given by the polynomial $g_{\alpha}= Z^p -Z - (1+\alpha) g(x_0) + \alpha f(x_0)$ is dominated by a $P_2$-Galois extension $\widetilde N_{\alpha}/K$ which is realized by the pair $(G_2,P_2)$. So there is an $\alpha \neq 0, 1$ such that the points $(t=\alpha)$ and $(t=\alpha -1)$ both lie in $\mathcal{V}$. Let $X_i \rightarrow \mathbb{P}^1$ be the corresponding $G_i$-Galois covers of the affine line with inertia groups $P_i$ above $\infty$, and $\eta_i$ be points in $X_i$ over $\infty$ such that $K_{X_1,\eta_1}=M_{\alpha}$ and $K_{X_2,\eta_2}=\widetilde N_{\alpha-1}$.

Let $a_1 \in M_\alpha$ be a root of $f_\alpha$ and $a_2 \in N_{\alpha-1}$ be a root of $g_{\alpha-1}$. So $a_1-a_2$ is a root of the Artin-Schreier equation $Z^p-Z + \alpha  x_0^{-1}=0$. Since $-v_{x_0}(((1-\alpha) f(x_0) + \alpha g(x_0) + \alpha x_0^{-1})-(\alpha g(x_0) - (\alpha -1) f(x_0)))=1$, by Theorem \cite[Proposition 3.1]{15}, the compositum $M=M_{\alpha}\widetilde N_{\alpha}$ is a $Q = \mathbb{Z}/p \times \Z/p^r$-Galois extension with the first lower jump at $1$, and $Q_1=Q$, $Q_2=\text{Gal}(M/K(a_1-a_2)) \cong \mathbb{Z}/p^r$.

Let $X$ be the dominant connected component of the normalization of $X_1 \times_{\mathbb{P}^1} X_2$ containing the point $\eta:=(\eta_1,\eta_2)$. Then $\Theta \colon X \to \PP^1$ is a $G_1 \times G_2$-Galois cover branched only above $\infty$ with the local extension $K_{X,\eta}/K_{\PP^1,\infty}$ is given by $M/k((x_0))$, and the inertia subgroup $Q$ has a lower jump at $1$. Since $G_1$ and $G_2$ are perfect, $G_1 \times G_2$ is also perfect, and so by \cite[Theorem 3.7]{16}, there is a $G_1 \times G_2$-Galois cover of the affine line with inertia group above $\infty$ given by $I=\text{Gal}(M/K(a_1 - a_2))$ in $G$. Now the projection maps from $G_1\times G_2$ restricted to $I$ surjects onto $\Gal(M_{\alpha}/K)=P_1=\langle \tau \rangle$ and $\Gal(\widetilde N_{\alpha-1}/K)=P_2=\langle \sigma \rangle$. So $I$ is of the form $\langle (\tau^a,\sigma) \rangle$ for some $1 \leq a\leq p-1$.
\end{proof}

\begin{remark}\label{rmk3.9}
Observe that a finite product of groups preserves the properties of being quasi $p$ and perfect. In particular, if $G=A_{d_i}$, $d_i \geq p$, $i=1$, $2$, then the hypothesis of the Theorem \ref{thm_pdt_g} is satisfied.
\end{remark}

\begin{corollary}\label{cor_pdt}
Let $r\ge 2$ and $u \geq 1$ be integers. For $1 \leq i \leq u$, let $d_i \geq p$. Assume that the pairs $(A_{rp},\langle \tau \rangle)$ and $(A_{rp+1}, \langle \tau \rangle)$ are realizable where $\tau$ is the product of $r$ disjoint $p$-cycles in $A_{rp}$. Then the wild part of the inertia conjecture is true for $A_{d_1} \times \cdots \times A_{d_u}$.
\end{corollary}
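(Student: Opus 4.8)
The necessity (``only if'') direction is the standard fact recalled in the introduction: for any $G$-Galois cover branched only at $\infty$ the conjugates of an inertia group over $\infty$ generate $G$, since the tame fundamental group of $\A^1$ is trivial and $G$ is a quasi $p$-group. So I would fix a $p$-subgroup $P\le G:=A_{d_1}\times\cdots\times A_{d_u}$ whose conjugates generate $G$ and aim to realize $(G,P)$. First I would record the single-factor input: combining Corollary~\ref{cor_A_d_containing_cycle} (the case $r=1$), the assumed realizability of $(A_{rp},\langle\tau\rangle)$ and $(A_{rp+1},\langle\tau\rangle)$ for $r\ge 2$, and Proposition~\ref{prop_reduction_to_one_mod_p}, one obtains that $(A_d,\langle\tau_r\rangle)$ is realizable for every $d\ge rp$ and every $1\le r\le\lfloor d/p\rfloor$, where $\tau_r$ is a product of $r$ disjoint $p$-cycles. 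Since each $A_{d_i}$ is simple non-abelian, the normal closure of any nontrivial $p$-element is all of $A_{d_i}$, so by Harbater's increasing theorem \cite[Theorem 2]{2} the pair $(A_d,\langle w\rangle)$ is realizable for \emph{every} $p$-element $w\in A_d$: reduce $w$ to the order-$p$ element $w^{p^k}$ (a product of $p$-cycles), realize $\langle w^{p^k}\rangle$ by the previous sentence, and inflate back up to $\langle w\rangle$.

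The backbone of the argument is Lemma~\ref{lem_minimal_inertia_product}. Applying it to $Q=P$ produces elements $g_1,\dots,g_r\in P$ whose supports $S(g_j)$ cover $\{1,\dots,u\}$ and for which the conjugates of $\langle g_1,\dots,g_r\rangle$ generate $G$. Setting $G_j:=H_{S(g_j)}=\prod_{i\in S(g_j)}A_{d_i}\le G$ and $Q_j:=\langle g_j\rangle$, one has $Q_j\le G_j\cap P$ and $\langle\{G_j\}_j\rangle=H_{\cup_j S(g_j)}=G$, hence $G=\langle\{G_j\}_j,P\rangle$. Thus Raynaud's patching result (Theorem~\ref{thm_Raynaud}) reduces the statement to the following local problem: for each $j$, realize $(H_{S(g_j)},\langle g_j\rangle)$ with the \emph{exact} element $g_j$, i.e.\ produce a $\big(\prod_{i\in S(g_j)}A_{d_i}\big)$-Galois \'etale cover of $\A^1$ whose inertia over $\infty$ is precisely $\langle g_j\rangle$.

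To realize $(H_S,\langle g\rangle)$ for a $p$-element $g=(g^{(i)})_{i\in S}$ of full support I would induct, lexicographically on $\big(|S|,\max_i\ord(g^{(i)})\big)$, interleaving Theorem~\ref{thm_pdt_g} with Harbater inflation. Partition $S$ into the coordinates $S_{\mathrm{lo}}$ where $g^{(i)}$ has order $p$ and $S_{\mathrm{hi}}$ where it has higher order. First realize $(H_{S_{\mathrm{hi}}},\langle g|_{S_{\mathrm{hi}}}\rangle)$: here all components have order $>p$, so the element $(g|_{S_{\mathrm{hi}}})^{\,p}$ still has full support $S_{\mathrm{hi}}$; realize the smaller group by induction and inflate back up via \cite[Theorem 2]{2} (legitimate, since the conjugates of $\langle g|_{S_{\mathrm{hi}}}\rangle$ generate $H_{S_{\mathrm{hi}}}$). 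Then attach the coordinates of $S_{\mathrm{lo}}$ one at a time by applying Theorem~\ref{thm_pdt_g} with $G_1=A_{d_i}$, $\tau=g^{(i)}$ (order $p$, so the hypotheses are met — products of $A_{d_i}$ are perfect by Remark~\ref{rmk3.9}), $G_2$ the product built so far, and $\sigma$ the inertia generator already realized on it. Each application outputs $\langle(\tau^a,\sigma)\rangle$ for an uncontrolled $a$; but $\tau^a$ and $\tau$ are products of $p$-cycles of the same cycle type, hence $A_{d_i}$-conjugate, so $\langle(\tau^a,\sigma)\rangle$ is $H$-conjugate to $\langle(\tau,\sigma)\rangle$, and a cover realizing the former realizes the latter at another point of the fibre over $\infty$. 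Iterating restores exactly $\langle g\rangle$, which feeds back into the Raynaud step.

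The hard part is precisely this local step, and it is where the two features of Theorem~\ref{thm_pdt_g} fight against Raynaud: that theorem builds only a \emph{cyclic} inertia group carrying a single order-$p$ factor, and it pins that group down only up to the ambiguous power $a$, whereas Raynaud forces me to land on the prescribed subgroup $\langle g_j\rangle\le P$ on the nose. Both difficulties are absorbed by the inductive scheme above — higher-order components are lowered by powering and re-inflated through \cite[Theorem 2]{2}, and the exponent $a$ is killed by passing to a conjugate point over $\infty$. The only genuinely delicate point I foresee is the conjugacy claim when some $g^{(i)}$ is a \emph{single} $p$-cycle with $d_i=p$, where the $S_{d_i}$-class of $g^{(i)}$ splits in $A_{d_i}$; this is managed by choosing the exponent $c$ in the simultaneous conjugacy $\gamma\,g'\,\gamma^{-1}=g^{c}$ to match the relevant $A_{d_i}$-class, which is possible as long as at most one coordinate is of this exceptional type.
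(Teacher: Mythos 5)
Your overall skeleton is the same as the paper's: deduce the single-factor statement from Corollary~\ref{cor_A_d_containing_cycle}, Proposition~\ref{prop_reduction_to_one_mod_p} and the hypotheses; extract $g_1,\dots,g_r$ via Lemma~\ref{lem_minimal_inertia_product}; reduce via Theorem~\ref{thm_Raynaud} to realizing each $(H_{S(g_j)},\langle g_j\rangle)$ exactly; and build each of these by iterating Theorem~\ref{thm_pdt_g} together with Harbater's theorem \cite{2} (the paper organizes this as an induction on the number of factors $u$ rather than your lexicographic induction, and it handles higher-order components by keeping the order-$p$ coordinate guaranteed by condition (4) of Lemma~\ref{lem_minimal_inertia_product} in the $G_1$-slot of Theorem~\ref{thm_pdt_g}, rather than by your powering-and-reinflating device; both of these bookkeeping choices are fine). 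The problem is the step where you kill the uncontrolled exponent $a$ produced by Theorem~\ref{thm_pdt_g}.

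You assert that ``$\tau^a$ and $\tau$ are products of $p$-cycles of the same cycle type, hence $A_{d_i}$-conjugate.'' Same cycle type only gives $S_{d_i}$-conjugacy; the $S_{d_i}$-class splits into two $A_{d_i}$-classes exactly when the cycle type has distinct odd parts, i.e.\ exactly when $g^{(i)}$ is a \emph{single} $p$-cycle and $d_i\in\{p,p+1\}$ (not only $d_i=p$, as you say), and in that case $\tau\sim_{A_{d_i}}\tau^a$ if and only if $a$ is a square in $\mathbb{F}_p^*$. You flag this, but your patch is explicitly conditional on ``at most one coordinate of this exceptional type,'' and that restriction is simply not available: take $G=A_p\times A_p$ and $P=\langle(\tau_1,\tau_2)\rangle$ with both $\tau_i$ single $p$-cycles (a legitimate instance, indeed one needed for Corollary~\ref{cor_pdt_less_than_2p}). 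If Theorem~\ref{thm_pdt_g} outputs a non-residue $a$, then $\langle(\tau_1^a,\tau_2)\rangle$ is not $G$-conjugate to $\langle(\tau_1,\tau_2)\rangle$: any conjugacy would force $\gamma_1\tau_1\gamma_1^{-1}=\tau_1^{ab}$ and $\gamma_2\tau_2\gamma_2^{-1}=\tau_2^{b}$ with $\gamma_1,\gamma_2\in A_p$, so both $ab$ and $b$ would be residues, forcing $a$ to be a residue. Hence no passage to another point of the fibre over $\infty$, and no change of generator, recovers the prescribed subgroup, and your Raynaud input fails.

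The missing idea — and what the paper actually does — is to use an \emph{automorphism} of $A_{d_i}$ rather than inner conjugacy. Since $\tau$ and $\tau^a$ are always $S_{d_i}$-conjugate and $A_{d_i}$ is normal in $S_{d_i}$, conjugation by the (possibly odd) conjugating permutation restricts to an automorphism $\alpha$ of $A_{d_i}$ with $\alpha(\tau)=\tau^a$. Precomposing the $(A_{d_i}\times G_2)$-action on the cover with $\alpha\times\mathrm{id}$ leaves the cover and its branch locus unchanged but replaces the inertia subgroup $\langle(\tau^a,\sigma)\rangle$ by $\langle(\tau,\sigma)\rangle$, uniformly and with no case analysis on splitting of classes. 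Substituting this twist for your conjugacy argument repairs the proof; the remainder of your proposal then goes through.
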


\begin{proof}
In view of Corollary \ref{cor_A_d_containing_cycle}, Proposition \ref{prop_reduction_to_one_mod_p} and Remark \ref{rmk_general_IC_for_A_d}, the hypothesis implies that the wild part of inertia conjecture is true for Alternating groups. Set $G \coloneqq A_{d_1} \times \cdots \times A_{d_u}$. Let $P$ be a $p$-subgroup of $G$ whose conjugates generate $G$. There exist $g_1, \cdots, g_r \in P$ satisfying conditions (1)-(4) of Lemma \ref{lem_minimal_inertia_product}. 

The case $r=1$ will be proved by induction on $u$. Let $\pi_i \colon G \twoheadrightarrow A_{d_i}$ and $\pi \colon G \rightarrow A_{d_2} \times \cdots \times A_{d_u}$ be the projection maps. So there exists a cyclic subgroup $P'=\langle g_1\rangle$ of $P$ such that $\pi_1(P')=\langle \tau\rangle\le A_{d_1}$ is a cyclic group of order $p$, $\pi(P')$ is a cyclic $p$-group with generator $\sigma$, say, and $\pi_i(P')$ are nontrivial subgroups for all $i$. By induction hypothesis on $u$, $(A_{d_2}\times \cdots A_{d_u}, \pi(P'))$ is realizable. Moreover $P'=\langle (\tau,\sigma) \rangle$ and $\tau$ is of order $p$. Now by Theorem \ref{thm_pdt_g}, $(A_{d_1} \times \cdots \times A_{d_u},I)$ is realizable where $I=\langle (\tau^a,\sigma) \rangle$ for some $1 \leq a\leq p-1$. But there is an automorphism of $A_{d_1}$ which sends $\tau$ to $\tau^a$. Hence $(A_{d_1} \times \cdots \times A_{d_u}, P')$ is realizable. Finally by Harbater's result \cite[Theorem 2]{2}, $(A_{d_1} \times \cdots \times A_{d_u},P)$ is realizable.

Now for $r\ge 2$, in the Notation \ref{not_pdt} of Lemma \ref{lem_minimal_inertia_product}, let $H_i=H_{S(g_i)}$. By $r=1$ case, the pairs $(H_i,\langle g_i\rangle)$ are realizable for $1\le i\le r$. Now the result follows from Theorem \ref{thm_Raynaud}.
\end{proof}

\begin{corollary}\label{cor_pdt_less_than_2p}
Let $u \geq 1$ be an integer and let $p \leq d_i < 2p$ for all $1 \leq i \leq u$. The wild part of the inertia conjecture is true for $A_{d_1} \times \cdots \times A_{d_u}$. Moreover, if $Q$ is any $p$-group, the wild part of the inertia conjecture is true for $A_{d_1} \times \cdots \times A_{d_u} \times Q$.
\end{corollary}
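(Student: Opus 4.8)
The plan is to deduce both statements from Corollary \ref{cor_pdt}, whose hypothesis must first be verified under the present restriction $p \leq d_i < 2p$. The key observation is that in this range the only multiple of $p$ in $\{p, p+1, \dots, 2p-1\}$ is $p$ itself, so $r=1$ throughout and the element $\tau$ that Lemma \ref{lem_minimal_inertia_product} produces for each factor is a single $p$-cycle. Thus I do not need the unproven hypotheses of Corollary \ref{cor_pdt} about $(A_{rp}, \langle \tau \rangle)$ and $(A_{rp+1}, \langle \tau \rangle)$ for $r \geq 2$; instead I invoke Corollary \ref{cor_A_d_containing_cycle} (equivalently Corollaries \ref{thm3.3B} and \ref{thm3.3A}), which already establishes unconditionally that $(A_{d_i}, \langle \tau_i \rangle)$ is realizable for any $p$-cycle $\tau_i$, for every $d_i \geq p$. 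This is precisely the $r=1$ input that the proof of Corollary \ref{cor_pdt} consumes.

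First I would set $G \coloneqq A_{d_1} \times \cdots \times A_{d_u}$ and let $P$ be a $p$-subgroup whose conjugates generate $G$. Applying Lemma \ref{lem_minimal_inertia_product} together with the remark following it, since each $A_{d_i}$ is simple non-abelian and $d_i < 2p$ forces every element of $A_{d_i}$ of $p$-power order to be a single $p$-cycle, I would observe that the chosen generators $g_1, \dots, g_r \in P$ each have $p$-cycle components. Then I would run exactly the induction on $u$ from the $r=1$ part of the proof of Corollary \ref{cor_pdt}: using the projections $\pi_1 \colon G \twoheadrightarrow A_{d_1}$ and $\pi \colon G \to A_{d_2} \times \cdots \times A_{d_u}$, the cyclic subgroup $\langle g_1 \rangle$ projects to $\langle \tau \rangle$ of order $p$ in $A_{d_1}$ and to a cyclic $p$-group in the rest, the hypotheses of Theorem \ref{thm_pdt_g} hold by Remark \ref{rmk3.9}, and the conjugacy $\tau \mapsto \tau^a$ inside $A_{d_1}$ lets me correct the exponent $a$ produced by that theorem. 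Finally Harbater's result \cite[Theorem 2]{2} upgrades realizability of $(G, \langle g_1 \rangle)$ to $(G, P)$.

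For the last sentence, I would append a $p$-group factor $Q$ and reduce to the previous case via Raynaud's patching result, Theorem \ref{thm_Raynaud}. Write $G' \coloneqq A_{d_1} \times \cdots \times A_{d_u} \times Q$ and let $P'$ be a $p$-subgroup of $G'$ whose conjugates generate $G'$. The subtlety is that $Q$ is a $p$-group, hence not perfect, so it cannot play the role of a factor in Theorem \ref{thm_pdt_g}; this is the one place where the argument genuinely differs from the body of Corollary \ref{cor_pdt}. The natural fix is to note that $Q = p(G')$ restricted appropriately is generated by conjugates sitting inside the image of $P'$, and to assemble $G'$ from the subgroups $A_{d_i}$ (handled by the main case) together with $Q$ itself, with the $p$-subgroup $Q$ realizable over the affine line by Harbater's theorem on Sylow subgroups; Theorem \ref{thm_Raynaud} then glues these into a single $G'$-cover with inertia $P'$. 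I expect the main obstacle to be verifying the hypothesis $G' = \langle \{G_i\}, P' \rangle$ of Theorem \ref{thm_Raynaud} cleanly, namely choosing the subgroups $G_i$ and the $p$-subgroups $Q_i \subseteq G_i \cap P'$ so that the projections of $P'$ to each Alternating factor and to $Q$ are simultaneously respected; once that bookkeeping is arranged, realizability of each piece is already in hand.
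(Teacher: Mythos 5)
Your proof of the first statement is essentially the paper's argument: the paper likewise observes that for $p \le d_i < 2p$ every nontrivial $p$-subgroup of $A_{d_i}$ is cyclic, generated by a single $p$-cycle, so the only realizability inputs consumed by the proof of Corollary \ref{cor_pdt} are supplied unconditionally by Corollary \ref{cor_A_d_containing_cycle}, and the unproven hypotheses about $(A_{rp},\langle\tau\rangle)$ and $(A_{rp+1},\langle\tau\rangle)$ are never invoked. One repairable omission: your endgame ``Harbater upgrades $(G,\langle g_1\rangle)$ to $(G,P)$'' presumes $S(g_1)=\{1,\dots,u\}$, which Lemma \ref{lem_minimal_inertia_product} (see the remark following it) guarantees only when $u\le p$. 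For $u>p$ it can happen that no element of $P$ has full support: for instance, embedding $(\Z/p)^2$ into $\langle\tau_1\rangle\times\cdots\times\langle\tau_{p+1}\rangle \le A_{d_1}\times\cdots\times A_{d_{p+1}}$ by evaluating one functional from each of the $p+1$ lines of the dual plane, every nonzero element has exactly one trivial component, yet all projections are nontrivial. In that case the conjugates of $g_1$ generate only $H_{S(g_1)}\subsetneq G$, so $(G,\langle g_1\rangle)$ is not realizable at all and Harbater's theorem has nothing to upgrade. The fix is exactly the $r\ge 2$ step of the paper's proof of Corollary \ref{cor_pdt}: run your induction for each pair $(H_{S(g_i)},\langle g_i\rangle)$ and glue with Theorem \ref{thm_Raynaud}.

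The genuine gap is in the second statement. The paper does not use Theorem \ref{thm_Raynaud} there; it quotes \cite[Corollary 4.6]{15}, and your patching plan cannot replace that citation, because the new inertia groups this statement concerns are ``diagonal'' and meet every factor subgroup trivially. Concretely, take $p=5$, $G'=A_5\times\Z/5$ and $P'=\langle(\tau,1)\rangle$ with $\tau$ a $5$-cycle and $1$ a generator of $\Z/5$: the conjugates of $P'$ generate $G'$, so $(G',P')$ must be realized, but $P'\cap(A_5\times 1)=P'\cap(1\times\Z/5)=1$. In Theorem \ref{thm_Raynaud} any nontrivial $G_i$ requires a nontrivial $Q_i\le G_i\cap P'$ (a connected Galois cover of $\mathbb{P}^1$ that is \'etale everywhere is trivial), and since $P'\cong\Z/5$ has no proper nontrivial subgroups this forces $Q_i=P'\le G_i$, with $G_i$ quasi-$p$ and equal to the normal closure of $P'$ in itself. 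Running through the subgroups of $A_5\times\Z/5$ containing $(\tau,1)$, the only candidates are $G_i=P'$ (a graph over a non-perfect quasi-$p$ subgroup of $A_5$ generated by conjugates of $\tau$, and $\langle\tau\rangle$ is the only such subgroup) and $G_i=A_5\times\Z/5=G'$; the first cannot generate $G'$, and the second is circular. So no amount of bookkeeping makes Theorem \ref{thm_Raynaud} apply: patching assembles the inertia group from inertia groups of known sub-covers, and this mechanism is structurally incapable of producing an inertia group that projects onto, but does not intersect, the factors. Your observation that $\pi_Q(P')=Q$ is correct but lives on the quotient side, which patching cannot exploit. Overcoming this is exactly what the compositum techniques of \cite{15} do (realize the Alternating-product cover first, then match a suitable local $Q$-extension, using that every local Galois extension with $p$-group Galois group comes from a cover of $\mathbb{P}^1$ branched only at $\infty$), and that is the result the paper cites.
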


\begin{proof}
When $d \leq 2p-1$, any $p$-subgroup of $A_d$ contains a $p$-cycle. So the first part of the statement follows from Corollary \ref{cor_A_d_containing_cycle} and Corollary \ref{cor_pdt}. The second statement follows from the first one and \cite[Corollary 4.6]{15}.
\end{proof}

\section{A New Equation for Alternating Group covers}\label{sec_neweq}
Let $p$ be an odd prime and let $a \geq 3$ be an integer. Set $d=a p$. In this section we produce an $A_d$-Galois \'{e}tale cover of the affine line with wild part of the inertia group generated by a $p$-cycle which occurs as the Galois closure of a degree-$d$ cover given by an explicit affine equation. Since there are ways to increase the upper jump of the ramification filtration, an interesting problem is to find covers with minimal possible upper jump. Riemann-Hurwitz formula provides a lower bound for the upper jump. We show that the resulting cover attains this lower bound.

\begin{lemma}\label{lem_num}
Let $p>2$ be a prime number, $d=a p$, $a \geq 3$, $j \coloneqq \frac{p-1}{(p-1,d+1)}$. Then there exists an integer $s$, $1 \leq s \leq p-1$, such that $s|j$ and $(j,d-p-s)=1$.
\end{lemma}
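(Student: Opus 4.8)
The plan is to construct $s$ explicitly as a squarefree divisor of $j$ assembled from a suitable subset of the prime factors of $j$, and then to verify the coprimality condition one prime at a time. First I would record two elementary facts that drive everything. Since $j = (p-1)/(p-1,d+1)$ divides $p-1$, we have $1 \le j \le p-1$, so \emph{any} divisor $s$ of $j$ automatically satisfies $1 \le s \le p-1$; thus the size constraint is free once I arrange $s \mid j$. Moreover, every prime $\ell$ dividing $j$ divides $p-1$ and is therefore strictly smaller than $p$, so $\ell \neq p$ and in particular $(j,p)=1$. I would also rewrite the target quantity as $d-p-s = (a-1)p - s$, noting that $a \ge 3$ makes this positive (though only coprimality is needed for the statement).

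Next I would define $s$ as follows. Partition the set of prime divisors of $j$ into those that divide $a-1$ and those that do not, and let $s$ be the product of the primes in the second class, with the convention that the empty product equals $1$. Being a product of distinct primes each dividing $j$, this $s$ divides $j$, and hence $1 \le s \le p-1$ as observed above.

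Finally I would establish $(j,\,(a-1)p - s) = 1$ by checking that no prime $\ell \mid j$ divides $(a-1)p - s$. If $\ell \mid a-1$, then by construction $\ell \nmid s$, so $(a-1)p - s \equiv -s \not\equiv 0 \pmod{\ell}$. If instead $\ell \nmid a-1$, then $\ell \mid s$, so $(a-1)p - s \equiv (a-1)p \pmod{\ell}$, and this is nonzero modulo $\ell$ because $\ell \nmid a-1$ together with $\ell \nmid p$ (the latter from $(j,p)=1$). In either case $\ell \nmid (a-1)p - s = d-p-s$, which gives $(j,d-p-s)=1$ and completes the argument. The proof is essentially bookkeeping of prime factors, so I anticipate no serious obstacle; the only point demanding care is the clean separation into the two classes of primes, where the observation $(j,p)=1$ is exactly what prevents the second case from ever failing.
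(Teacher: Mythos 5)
Your proof is correct. Every step checks: any divisor of $j$ lies in $[1,p-1]$ because $j \mid p-1$; your squarefree $s$ (the product of the primes dividing $j$ but not $a-1$) divides $j$; and the two-case verification --- $\ell \mid a-1$ forces $\ell \nmid s$, so $d-p-s \equiv -s \not\equiv 0 \pmod{\ell}$, while $\ell \nmid a-1$ forces $\ell \mid s$, so $d-p-s \equiv (a-1)p \not\equiv 0 \pmod{\ell}$, using $\ell \mid p-1 \Rightarrow \ell \neq p$ --- covers every prime divisor of $j$. The paper's proof rests on the same germ of an idea (take $s$ supported on the primes of $j$ that avoid $a-1$, then check prime by prime), but executes it far less directly: it first reduces $(j,d-p-s)=(j,a-1-s)$ via $p \equiv 1 \pmod{j}$, disposes of the cases $a=3$ and $j=1$ by hand, proves a periodicity statement in $a$ (a solution for $a$ remains one for $a+p-1$) to reduce to $4 \le a \le p+1$, and only then defines $s$ as the full prime-to-$(a-1)$ part of $j$, keeping entire prime powers where you keep single primes. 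Your uniform construction makes all of that reduction machinery unnecessary, and indeed never uses $a \geq 3$ except for positivity of $d-p-s$. The one thing the paper's heavier construction buys is the specific value $s=1$ when $a=3$, which Remark \ref{rmk_num} quotes verbatim to conclude $d-p-s = 2p-1 > \frac{3p}{2}$; with your $s$ that particular claim can fail (for $p=7$, $a=3$ one gets $j=3$ and $s=3$), so if your proof were substituted into the paper, the remark would instead argue that for $a=3$ one has $2 \mid (p-1,d+1)$, hence $s \le j \le \frac{p-1}{2}$ and $d-p-s \ge \frac{3p+1}{2} > \frac{d}{2}$ --- which is all that the primitivity argument in Proposition \ref{prop3.1} actually needs. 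So your argument is not only valid but strictly simpler, at the cost of a one-line repair to a downstream remark that silently relies on the paper's particular choice of $s$.
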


\begin{proof}
Since $j|(p-1)$, for any integer $s$ we have $(j,d-p-s)=(j,a-1-s)$. When $a=3$, for any $p>2$, $s=1$ satisfies the stated properties. For some $a$ and $p$, if $j=1$, then $s=1$ is again a solution. Now assume that $s$ satisfies the stated properties for some $p$ and $a$. Consider $a'=a+p-1$, $d'=a' p$, and $j' = \frac{p-1}{(p-1,(a+p-1) p +1)}$. Then $j'= \frac{p-1}{(p-1,a+1)}=j$ and
$$(j',d'-p-s)=(j,(a+p-2) p -s) = (j, (a-1) p -s)=(j,d-p-s).$$
Since $(j,d-p-s)=1$ by assumption, $s$ is also a solution for $p$ and $a'$.

So for a fixed $p>2$, we may assume that $4 \leq a \leq p+1$. Let $Q_1$ and $Q_2$ be the set of primes dividing $a+1$ and $a-1$ respectively. Let $Q$ be the set of primes dividing $p-1$, and for any prime $q \in Q$, let $\lambda_q$ be the highest positive integer such that $q^{\lambda_q}|(p-1)$. Let $I_1$ be the set of primes dividing $(p-1,a+1)$ and $I_2$ be the set of primes dividing $(p-1,a-1)$. Set $I \coloneqq I_1 \cup I_2$. Let $(p-1,a+1)= \Pi_{q \in I_1} q^{m_q}$ and $a-1=\Pi_{q \in Q_2} q^{s_q}$ (we use the notion that an empty product equals $1$). For $q \in I_1 \setminus I_2$, set $\alpha_q \coloneqq m_q$ and for $q \in I_2$, set $\alpha_q \coloneqq \lambda_q$. So we have
$$j = \frac{p-1}{(p-1,d+1)} = \frac{p-1}{(p-1,a+1)} = \frac{p-1}{\Pi_{q \in I_1} q^{m_q}}.$$
Let $s = \frac{p-1}{\Pi_{q \in I} q^{\alpha_q}}$ and $v = \frac{\Pi_{q \in I} q^{\alpha_q}}{\Pi_{q \in I_1} q^{m_q}}$. Note that $s v = j$. Let
$$g = (j , d-p-s) = (j,a-1-s) = (\frac{p-1}{\Pi_{q \in I_1} q^{m_q}} , \Pi_{q \in Q_2} q^{s_q} - \frac{p-1}{\Pi_{q \in I} q^{\alpha_q}}).$$
Let $q$ be a prime dividing $g$. So $q| j = s v$. If $q|s$, then $q \not\in I_2$ as $\alpha_q = \lambda_q$ for $q \in I_2$. Since $q$ divides both $j$ and $s$, it must divide $a-1$, and so $q \in I_2$, a contradiction. So $q \nmid s$ and we have $q|v$. So $q \in I_2$, and thus $q|(a-1)$. Thus $q \nmid (a-1-s)$ contradicting $q$ divides $g$. So we have $g=1$.
\end{proof}

\begin{remark}\label{rmk_num}
Let $p$, $a$ and $s$ be as in the above Lemma \ref{lem_num}. When $a=3$, we have $s=1$, and so $d-p-s=2 p -1 > \frac{3p}{2}$. For $a \geq 4$, we have $s \leq p-1$, and so $d-p-s \geq d-p - (p-1) > \frac{d}{2}$. \end{remark}

Now set $j \coloneqq \frac{p-1}{(p-1,d+1)}$. Using Lemma \ref{lem_num}, fix an integer $1 \leq s \leq p-1$, such that $s|j$ and $(j,d-p-s)=1$. Consider the degree-$d$ cover $\psi: Y \rightarrow \mathbb{P}^1$ given by the affine equation
\begin{equation}\label{eq4}
x = \frac{1+ y^{d-s} (y+1)^s}{ y^{d-p-s} (y+1)^s}.
\end{equation}
Put $h(y) \coloneqq 1 + y^{d-s} (y+1)^s \in k[y]$. Writing the above equation as
$$f(x,y) = h(y) - x y^{d-p-s} (y+1)^s=0,$$
its $y$-derivative is given by
$$f_{y}(x,y) = h'(y) + s x y^{d-p-s-1} (y+1)^{s-1}.$$
Assume that $f$ and $f_y$ have a common zero $(a,b)$. So $f_y(a,b)=0$ which implies that $h'(b) = - s a b^{d-p-s-1} (b+1)^{s-1}$. Also since $f(a,b)=0$, we have $s h(b) + b (b+1) h'(b)=0$. But we see that $s h(y)+ y (y+1) h'(y)= s \neq 0$ in $k[y]$, showing that $f$ and $f_y$ cannot have a common zero. Since the cover $\psi$ is non-trivial, it is branched only above $x = \infty$. Observe that the Equation \ref{eq4} can be written in the following manner.
\begin{eqnarray*}
x^{-1} & = & \frac{y^{d-p-s} (y+1)^s}{ 1 + y^{d-s} (y+1)^s};\\
x^{-1} & = & \frac{((y+1)-1)^{d-p-s} (y+1)^s}{1 + ((y+1)-1)^{d-s} (y+1)^s};\\
x^{-1} & = & \frac{y^{-p} (y^{-1}+1)^{s}}{ y^{-d} + (y^{-1}+1)^s}.
\end{eqnarray*}
Thus $v_{(y)}(x^{-1})=d-p-s$, $v_{(y+1)}(x^{-1})=s$, and $v_{(y^{-1})}(x^{-1})= p$. So there are exactly three points in $Y$ in the fibre of $\psi$ above $x=\infty$ with ramification indices $p$, $s$ and $d-p-s$. Let $\phi: \widetilde{Y} \rightarrow \mathbb{P}^1$ be the Galois closure of $\psi$ with Galois group $G$. Since $0$ and $-1$ are the only roots of $y^{d-p-s}(y+1)^s$ and $h(0)=1=h(-1)$, by Lemma \cite[Section 19, First Irreducibility Lemma]{7}, $f(x,y)$ is irreducible in $k(x)[y]$. So $G$ is a transitive quasi $p$ subgroup of $S_d=\Sym(\text{zeros of } f)$.

\begin{proposition}\label{prop3.1}
Let $p$ be an odd prime, $a \geq 3$ be an integer, and let $d=a p$. Let $\psi \colon Y \to \PP^1$ be the degree-$d$ cover given by affine Equation \ref{eq4}. Let $\phi : \widetilde{Y} \rightarrow \mathbb{P}^1$ be its Galois closure. Then $\phi$ is an $A_d$-Galois cover of $\mathbb{P}^1$ branched only at $\infty$, with inertia group $I=\langle \tau \rangle \rtimes \langle \beta \rangle$ above $\infty$ with $\tau$ a $p$-cycle, and $\beta$ of order $\text{l.c.m.}(\frac{p-1}{(p-1,d+1)},\text{l.c.m.}(s,d-p-s))$. Furthermore, it has upper jump $h/m=(d+1)/(p-1)$.
\end{proposition}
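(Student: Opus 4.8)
The plan is to mirror the proof of Proposition~\ref{prop_3.3B}, adapting it to the present case $p \mid d$ where the fibre of $\psi$ over $\infty$ has three points instead of two. The discussion preceding the statement already records that $\psi$ is branched only over $\infty$, that $\psi^{-1}(\infty)$ consists of three points with ramification indices $p$, $s$ and $d-p-s$, and that $G$ is a transitive quasi $p$-subgroup of $S_d = \Sym(\text{zeros of } f)$; moreover the function field of $Y$ is $k(y)$, so $Y \cong \PP^1$ and $g(Y)=0$. Since $p$ is odd, every element of $p$-power order is an even permutation, so the quasi $p$-group $G$ lies in $A_d$ automatically; the substantive point is to prove the reverse inclusion $A_d \subseteq G$.

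First I would perform the local analysis above $\infty$. Writing the equation as $x_0 := x^{-1} = w^p(w+1)^s/\bigl(w^d + (w+1)^s\bigr)$ with $w = y^{-1}$ (the third displayed form of Equation~\ref{eq4}), one sees that near $w=0$ one has $x_0 = w^p \cdot u(w)$ with $u(0)=1$ a unit. Clearing denominators gives a degree-$d$ polynomial over $k[[x_0]]$ whose reduction modulo $x_0$ is, up to sign, $y^{d-p-s}(y+1)^s$, so (after the change of variable $w=y^{-1}$ for the point at infinity) the local polynomial factors over $k[[x_0]]$ into three factors of degrees $p$, $s$ and $d-p-s$, attached to $y=\infty$, $y=-1$ and $y=0$. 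The degree-$p$ factor is the wild one: the relation $x_0 = w^p u(w)$ exhibits a totally ramified degree-$p$ extension whose wild inertia acts as a single $p$-cycle $\tau$ on the $p$ sheets approaching $y=\infty$ (the tame orbits of sizes $s$ and $d-p-s$ being prime to $p$, $\tau$ fixes them pointwise), while the two remaining Eisenstein factors supply the prime-to-$p$ tame part. Thus the Sylow $p$-subgroup of $I$ is $\langle\tau\rangle$ with $\tau$ a $p$-cycle.

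Once $\tau \in G$ is a $p$-cycle fixing $d-p = (a-1)p \geq 2p \geq 3$ letters, I would invoke the classical theorem of Jordan that a primitive subgroup of $S_d$ containing a cycle of prime length with at least three fixed points contains $A_d$; combined with $G \subseteq A_d$ this gives $G = A_d$. The inertia invariants then follow the computations of Section~\ref{sec_prilim}: since $I \subseteq N_{A_d}(\langle\tau\rangle)$, Proposition~\ref{prop2.1} yields $I = \langle\tau\rangle \rtimes \langle\beta\rangle$ with $\beta = \theta^i\omega$ and $\omega \in \Sym(\{p+1,\dots,d\})$; Lemma~\ref{lem2.2} identifies $\omega$ with the product of a disjoint $s$-cycle and $(d-p-s)$-cycle, so $t=2$ and $\ord(\omega) = \text{l.c.m.}(s,d-p-s)$. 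Feeding $g(Y)=0$ and $t=2$ into Equation~\ref{eq1} gives the upper jump $\sigma = (d+1)/(p-1)$, and Equation~\ref{eq2} gives $m'' = \ord(\theta^i) = (p-1)/(p-1,d+1)$. As $\theta^i$ and $\omega$ have disjoint support, $\ord(\beta) = \text{l.c.m.}(m'',\ord(\omega)) = \text{l.c.m.}\bigl((p-1)/(p-1,d+1),\,\text{l.c.m.}(s,d-p-s)\bigr)$, as claimed.

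The main obstacle is the primitivity of $G$ required to apply Jordan's theorem, since the First Irreducibility Lemma yields only transitivity. I expect to prove it by excluding a nontrivial block system: the $p$-cycle $\tau$ forces its $p$ moved letters either into a single $\tau$-invariant block or into a $\tau$-orbit of $p$ blocks, and the coprimality $(j,d-p-s)=1$ together with the incompatible tame cycle lengths $s$ and $d-p-s$ of $\omega$ (both guaranteed by Lemma~\ref{lem_num}) should obstruct any such block decomposition of $\{1,\dots,d\}$; alternatively, one may appeal directly to Abhyankar's higher irreducibility results for equations of this shape in \cite{7}.
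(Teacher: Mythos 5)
Your proposal tracks the paper's own proof for most of the argument: the Hensel factorization at $\infty$ producing a $p$-cycle $\tau$ that generates the Sylow $p$-subgroup of $I$, the use of Proposition \ref{prop2.1} and Lemma \ref{lem2.2} to write $I=\langle\tau\rangle\rtimes\langle\beta\rangle$ with $\beta=\theta^i\omega_1\omega_2$, and the extraction of the upper jump $(d+1)/(p-1)$ and of $\ord(\beta)$ from Equations \ref{eq1}, \ref{eq2} and \ref{eq3} are exactly the paper's steps; your explicit observation that $G\subseteq A_d$ because a quasi $p$-group with $p$ odd is generated by even permutations is correct and is left implicit in the paper.

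The genuine gap is the step you yourself call the main obstacle: primitivity of $G$ is never proved, and the sketch you offer cannot be completed as stated. The paper's argument here is short but is precisely what the numerical conditions of Lemma \ref{lem_num} were engineered for: since $\ord(\theta^i)=m''=j$, since $\omega_1$ is an $s$-cycle with $s\mid j$, and since $(j,d-p-s)=1$, one computes $\beta^j=(\theta^i\omega_1\omega_2)^j=\omega_2^j$, a single $(d-p-s)$-cycle; by Remark \ref{rmk_num} its length exceeds $d/2$, so $G$ is a transitive group containing a cycle fixing fewer than $d/2$ points, and the paper deduces primitivity from \cite[Remark 1.6]{24} and then $A_d\subseteq G$ from Jordan's theorem \cite[Theorem 3.3.E]{14}. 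You quote both facts ($s\mid j$ and the coprimality) but never take the $j$-th power of $\beta$, which is their entire purpose in the paper.

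Moreover, your plan to exclude a block system using only $\tau$ and the cycle type of $\omega$ cannot succeed, because the inertia group $I$ by itself is compatible with imprimitivity: whenever $(p+s)\mid a$ (for instance $p=5$, $a=14$, $s=2$, which is the value of $s$ produced by Lemma \ref{lem_num} in that case), the partition of $\{1,\dots,d\}$ consisting of $\Supp(\tau)\cup\Supp(\omega_1)$ together with the congruence-class blocks of size $p+s$ along the cycle $\omega_2$ is preserved by $\tau$, by $\beta$, and by $\beta^j$. So no argument that uses only the cycle types of elements of $I$ can rule out blocks; one needs a criterion that genuinely exploits the transitivity of all of $G$, and its exact hypotheses matter, since statements of the shape ``a transitive group containing a cycle with at least one and fewer than $n/2$ fixed points is primitive'' are false in general: in degree $6$, the group $\langle(1\,3\,2\,4),\,(1\,3\,5)(2\,4\,6)\rangle$ is transitive, preserves the blocks $\{1,2\},\{3,4\},\{5,6\}$, and contains a $4$-cycle fixing two points. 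Finally, your fallback of appealing to Abhyankar's irreducibility lemmas is not a substitute: those give irreducibility of $f$, i.e.\ transitivity of $G$, which you already have, and say nothing about primitivity.
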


\begin{proof}
Let $x_0=x^{-1}$, and consider the local equation of $Y$ near a preimage of $x=\infty$. Then $I$ is the Galois group of the splitting field of $f(y)= x_0 (1 + y^{d-s} (y+1)^s)- y^{d-p-s} (y+1)^s$ over $k((x_0))$. By Hensel's Lemma, $f(y)=g(y) h_1(y) h_2(y)$ in $k[[x_0]][y]$ with $g(y) \equiv -1 (\text{ mod } x_0)$ of degree $p$, $h_1(y) \equiv y^{d-p-s} (\text{ mod } x_0)$ of degree $d-p-s$, and $h_2(y) \equiv (y+1)^s (\text{ mod } x_0)$ of degree $s\le p-1$. An argument similar to Proposition \ref{prop_3.3B} shows that $h_1(y)$ is Eisenstein of degree $d-p-s$. Hence by Lemma \ref{lem3.3}, $\text{Gal}(h_1 ,k((x_0))) =\mu_{d-p-s}$ and $p \nmid |\Gal(h_2(y),k((x_0)))|$. Also $p$ divides $|I|$ and $g$ has degree $p$. So $\text{Gal}(g, k((x_0)))$ is a transitive subgroup of some $S_p=\Sym(\text{zeros of $g$})$. Hence $\text{Gal}(g, k((x_0))) \leq S_p \cap G$. So the Sylow $p$-subgroup of $I$ is the same as the Sylow $p$-subgroup of $\text{Gal}(g, k((x_0)))$, and it is generated by a $p$-cycle $\tau$. Without loss of generality, we may assume that $\tau = (1,\cdots,p)$.

By Proposition \ref{prop2.1}, the inertia group $I = \langle \tau \rangle \rtimes \langle \beta \rangle$, where $\beta = \theta^i \cdot \omega$ for some element $\theta$ in $S_p$ of order $(p-1)$, integer $0 \leq i \leq p-1$, and $\omega \in H =\text{Sym}(\{p+1, \cdots ,d\})$. Let $h$ be the conductor of the cover $\phi$, and let $\text{ord}(\beta) = m$. Since there are exactly three points in $\psi^{-1}(\infty)$ with ramification indices $p$, $s$ and $d-p-s$, by Lemma \ref{lem2.2}, $\omega$ is of the form $\omega= \omega_1 \cdot \omega_2$, where $\omega_1$ is an $s$-cycle and $\omega_2$ is a $(d-p-s)$-cycle disjoint from $\omega_1$. Now the Equations \ref{eq1}, \ref{eq2} and \ref{eq3} provide $\frac{h}{m}=\frac{d+1}{p-1}$, $\text{ord}(\theta^i)=m''=\frac{p-1}{(p-1,d+1)}$, $m'=\frac{\text{l.c.m.}(s,d-p-s)}{(m'', \text{l.c.m.}(s,d-p-s))}$. Thus $m''$ equals $j$ and $\beta$ has order $m'm''=\text{l.c.m.}(\frac{p-1}{(p-1,d+1)},\text{l.c.m.}(s,d-p-s))$. As $\theta^i$ has order $m''=j$, $s|j$ and $(j,d-p-s)=1$, $\beta^j=\omega^j$ is a $(d-p-s)$-cycle. By Remark \ref{rmk_num}, we have $d-p-s> d/2$. So the cycle $\beta^j$ fixes less than $d/2$ points, and hence $G$ is primitive (\cite[Remark 1.6]{24}). So by Jordan's Theorem \cite[Theorem 3.3.E]{14}, $G=A_d$.
\end{proof}

\begin{corollary}\label{cor_min}
Let $p$ be an odd prime, $a \geq 3$ be an integer, and let $d =a p$. The $A_d$-Galois cover $\phi \colon \widetilde{Y} \to \mathbb{P}^1$ has the minimum possible upper jump among all the $A_d$-Galois covers of $\mathbb{P}^1$ \'{e}tale away from $\infty$ for which the Sylow $p$-subgroup of the inertia group above $\infty$ is generated by a $p$-cycle.
\end{corollary}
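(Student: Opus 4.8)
The plan is to establish a lower bound for the upper jump of any admissible $A_d$-cover and then observe that the cover $\phi$ from Proposition \ref{prop3.1} meets this bound. Suppose $\phi' \colon \widetilde{Y}' \to \mathbb{P}^1$ is any $A_d$-Galois cover étale away from $\infty$ whose inertia group $I' = \langle \tau' \rangle \rtimes \langle \beta' \rangle$ above $\infty$ has Sylow $p$-subgroup generated by a $p$-cycle $\tau'$. After conjugating we may assume $\tau' = (1,\dots,p)$, so that the machinery of Section \ref{sec_prilim} applies verbatim. Let $Y' := \widetilde{Y}'/(A_d \cap S_{d-1})$ and let $\psi' \colon Y' \to \mathbb{P}^1$ be the associated degree-$d$ cover. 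By Theorem \ref{thm_grp} (specifically Equation \ref{eq1}) the upper jump of $\phi'$ equals
\begin{equation*}
\sigma' = \frac{2g(Y')+d+t-1}{p-1},
\end{equation*}
where $t \geq 2$ is the number of prime-to-$p$ cycles of $\omega'$ (here $t \geq 2$ since $p \mid d$) and $g(Y')$ is the genus of $Y'$.

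The first step is to minimize this expression over all admissible covers. Since $g(Y') \geq 0$ and $t \geq 2$, the quantity $2g(Y')+d+t-1$ is at least $d+1$, giving
\begin{equation*}
\sigma' \geq \frac{d+1}{p-1}.
\end{equation*}
This is the desired lower bound, and it is exactly the upper jump computed for $\phi$ in Proposition \ref{prop3.1}. Hence $\phi$ attains the minimum, which is the assertion of the corollary. I would therefore present the proof as: record the Riemann–Hurwitz/Hilbert-different formula \ref{eq1} as the governing identity, note the universal constraints $g(Y') \geq 0$ and $t \geq 2$ coming from $p \mid d$ (the latter is already flagged in Theorem \ref{thm_grp}), conclude the bound, and invoke Proposition \ref{prop3.1} for equality.

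The step requiring the most care is justifying that $t \geq 2$ (equivalently, that the fibre of $\psi'$ over $\infty$ has at least three points) holds for \emph{every} admissible cover, not merely for the specific $\phi$ constructed earlier. This is where the hypothesis $p \mid d$ is essential: if the fibre of $\psi'$ over $\infty$ consisted of only the point of ramification index $p$ together with a single prime-to-$p$ cycle of length $n_1$, then $p + n_1 = d$ would force $p \mid n_1$, contradicting $(p,n_1)=1$. Thus one genuinely cannot have $t=1$ when $p \mid d$, so $t \geq 2$ and $2g(Y')+d+t-1 \geq d+1$. One should state this explicitly rather than simply citing the parenthetical remark in Theorem \ref{thm_grp}, since the whole force of the minimality claim rests on it. No delicate estimate beyond this is needed; the remaining inequalities are immediate from nonnegativity of the genus.
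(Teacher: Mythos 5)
Your proof is correct and follows essentially the same route as the paper: both derive the lower bound $\sigma \geq \frac{d+1}{p-1}$ from Equation \ref{eq1} together with $g(Y') \geq 0$ and $t \geq 2$ (the latter forced by $p \mid d$), and then invoke Proposition \ref{prop3.1} to see the bound is attained. Your explicit verification that $t=1$ is impossible (since $p + n_1 = d$ would force $p \mid n_1$, contradicting $(p,n_1)=1$) is a detail the paper leaves to the parenthetical remark in Theorem \ref{thm_grp}, but it is the same underlying argument, not a different approach.
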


\begin{proof}
As a consequence of the Riemann-Hurwitz formula (Equation \ref{eq1}), for any transitive subgroup $G$ of $S_d$, any $G$-Galois covers of $\mathbb{P}^1$ \'{e}tale away from $\infty$ for which the Sylow $p$-subgroup of the inertia group above $\infty$ is generated by a $p$-cycle has upper jump $\geq \frac{d+1}{(p-1)}$. Now the result follows from Proposition \ref{prop3.1}.
\end{proof}

\bibliographystyle{amsplain}

\end{document}